\newtheorem{thrm}{Theorem}
\newtheorem{defi}[thrm]{Definition}
\newtheorem{prop}[thrm]{Proposition}
\newtheorem{lemma}[thrm]{Lemma}
\newtheorem{cor}[thrm]{Corollary}
\numberwithin{thrm}{section}
\newcommand{\la}{\mathfrak{g}}
\newcommand{\rtwo}{\mathbb{R}^2}
\newcommand{\cee}{\mathbb{C}}
\newcommand{\arr}{\mathbb{R}}
\newcommand{\gap}{\hspace{.25in}}
\newcommand{\ii }{\sqrt{-1}}
\begin{document}

\title{Homogeneous Solutions of Pluriclosed Flow on Closed Complex Surfaces}
\author{Jess Boling}

\begin{abstract}
Streets and Tian introduced a parabolic flow of pluriclosed metrics. We classify
the long time behavior of homogeneous solutions of this flow
on closed complex surfaces including minimal Hopf, Inoue, Kodaira, and
non-K\"ahler, properly elliptic surfaces. We also construct expanding soliton
solutions to the
flow on the universal covers of these surfaces by taking blowdown limits of
these homogeneous solutions.
\end{abstract}

\maketitle

\section{Introduction}

The Ricci flow $\partial_tg=-2Rc(g)$ is a well known tool for the study of Riemannian manifolds, but
is not well suited for Hermitian geometry because the Ricci tensor, in general,
is not Hermitian symmetric; under the Ricci flow an arbitrary Hermitian metric need not remain Hermitian for positive times. If the K\"ahler condition is imposed the Ricci tensor is Hermitian symmetric and the resulting flow is known as K\"ahler-Ricci flow. By the
$\partial\bar \partial$-lemma the K\"ahler-Ricci flow then reduces to a
parabolic Monge-Ampere equation for a single function. This reduction to a single scalar PDE makes analysis of the long time behavior and singularity formation of K\"ahler-Ricci flow tractable, and has
been used for example in \cite{cao1} to prove existence results for K\"ahler-Einstein metrics on
K\"ahler manifolds and in \cite{tia1} to give precise statements of the maximal
existence time for the flow in terms of cohomological data of the initial metric.

In the interest of studying general Hermitian manifolds, in particular to obtain classification results for non-K\"ahler complex manifolds, it is natural to consider geometric
flows constructed from curvature and torsion tensors associated to Hermitian compatible connections which agree with the Levi-Civita connection when the metric is K\"ahler. It is from this interest that we begin.

Let $(M^{2n},g,J)$ be a Hermitian manifold with complex structure $ J $ and
compatible Riemannian metric $ g $. The fundamental or K\"ahler 2-form
associated to the metric is $ \omega = g(J\cdot,\cdot) $ and we say the metric
is
\emph{pluriclosed} if \[ \partial\overline{\partial}\omega=0. \] In
\cite{st2,st1}, Streets and Tian introduce a parabolic flow of Hermitian
metrics 
which, when the metric is pluriclosed, is equivalent to the following
\begin{align} \label{pcf}
\frac{\partial \omega}{\partial
t}=\partial\partial^{\ast}\omega+\overline{\partial}\overline{\partial}^{\ast}
\omega+\frac{\ii }{2}\partial\overline{\partial}\log\det g.
\end{align}

We present the long time behavior of (\ref{pcf}) on all compact
Hermitian surfaces which are locally homogeneous. These include hyperelliptic,
Hopf, Inoue, Kodaira, and non-K\"ahler properly elliptic surfaces. We will also
construct expanding soliton solutions of the flow on the universal covers of
these spaces. Our theorem to this end will be:
\begin{thrm}\label{soli}
Let $g(\cdot)$ be a locally homogeneous solution of pluriclosed flow on a
compact
complex surface which exists on a maximal time interval $[0,T)$. 
If $T<\infty$ then the complex surface is rational or ruled. If $T=\infty$ and
the manifold is a Hopf surface, the evolving metric converges exponentially fast
to a canonical form unique up to homothety. Otherwise, there is a blowdown limit
\[\tilde g_\infty(t)=\underset{s\rightarrow\infty}\lim s^{-1}\tilde g(st)\] of
the
induced metric on the universal cover which is an expanding soliton in the sense
that $\tilde g(t) = t \tilde g(1)$ up to automorphism.
\end{thrm}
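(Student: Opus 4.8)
The plan is to exploit local homogeneity to reduce the flow (\ref{pcf}) from a PDE to an ODE. First I would observe that a locally homogeneous Hermitian surface has universal cover a four-dimensional Lie group $G$ (or homogeneous space $G/K$) equipped with a left-invariant complex structure $J$, and that by uniqueness of solutions the flow preserves left-invariance: the evolving metric $g(t)$ stays left-invariant for all $t$, so (\ref{pcf}) becomes an autonomous system of ODEs for the finitely many coefficients of $g(t)$ in a fixed frame of $\la = \mathrm{Lie}(G)$. Computing the three terms on the right of (\ref{pcf}) — the $\partial\partial^{\ast}$ and $\overline{\partial}\overline{\partial}^{\ast}$ pieces together with the Chern--Ricci--type form $\frac{\ii}{2}\partial\overline{\partial}\log\det g$ — in terms of the structure constants reduces the entire problem to analyzing this ODE in each case.

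Next I would enumerate the cases using the Enriques--Kodaira classification restricted to surfaces carrying a locally homogeneous metric: these are exactly the rational/ruled and hyperelliptic (K\"ahler) surfaces together with the Hopf, Inoue, Kodaira, and non-K\"ahler properly elliptic surfaces, each corresponding to an explicit four-dimensional $\la$ with a standard left-invariant $J$. For the K\"ahler cases the metric stays K\"ahler, (\ref{pcf}) collapses to K\"ahler--Ricci flow, and the known theory supplies the dichotomy: finite-time singularity precisely in the rational and ruled (positive-curvature) cases and long-time behavior otherwise. This disposes of the first assertion, since any homogeneous solution with $T<\infty$ must be of this K\"ahler rational or ruled type.

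For the non-K\"ahler cases I would write out the ODE explicitly, show directly that $T=\infty$, and then read off the long-time asymptotics. On a Hopf surface the relevant Lie group is $\arr\times SU(2)$; here I expect the metric coefficients to decay into a distinguished left-invariant metric, and I would prove exponential convergence to this canonical form, with uniqueness up to homothety following from the structure of the fixed-point set of the normalized ODE. For the Inoue, Kodaira, and properly elliptic surfaces the solutions should instead grow linearly in $t$, so I would form the blowdown rescaling $\tilde g_\infty(t)=\underset{s\rightarrow\infty}\lim s^{-1}\tilde g(st)$ on the universal cover and identify the limit.

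The hardest part will be establishing that this blowdown limit exists and is genuinely an expanding soliton, i.e.\ that $\tilde g_\infty(t)=t\,\tilde g_\infty(1)$ up to a one-parameter family of automorphisms of $(G,J)$. This requires two things: controlling the precise linear-in-$t$ asymptotics of the ODE solution so that the parabolic rescaling converges rather than degenerating, and exhibiting the diffeomorphisms — arising from dilation-type elements of $\mathrm{Aut}(G,J)$ — under which the rescaled flow is stationary. I would verify the soliton identity by checking that the fixed point of the trace-normalized ODE pulls back correctly under these automorphisms, treating the non-Hopf families separately since their automorphism groups and growth rates differ.
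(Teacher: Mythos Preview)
Your outline matches the paper's strategy almost exactly: reduce to an ODE on the Lie algebra, run through Wall's list of homogeneous Hermitian surfaces case by case, treat the K\"ahler cases via K\"ahler--Ricci flow, show exponential convergence on the Hopf surface, and take blowdown limits elsewhere. Two points of execution differ and are worth flagging. First, rather than computing $\partial\partial^{\ast}\omega$, $\overline{\partial}\overline{\partial}^{\ast}\omega$, and the Chern--Ricci term separately, the paper rewrites the flow as $\partial_t\omega=-(\rho^b_\omega)^{(1,1)}$ and invokes a closed formula (due to Vezzoni) expressing the Bismut--Ricci form of a left-invariant metric directly in terms of the structure constants; this collapses the right-hand side to a single short computation in each case. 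Second, your expectation that the rescaling diffeomorphisms $\theta_s$ lie in $\mathrm{Aut}(G,J)$ is too restrictive and would get stuck: the paper only requires a multiplicative one-parameter family of \emph{diffeomorphisms} ($\theta_{s_1}\circ\theta_{s_2}=\theta_{s_1s_2}$), and in the $Nil^3\rtimes\arr$ and $Sol_1'$ cases the blowdown limit actually lives on the \emph{other} complex structure ($Nil^3\times\arr$ and $Sol_1$, respectively), so no holomorphic $\theta_s$ can work. The payoff of the weaker requirement is that the soliton identity $g_\infty(at)=a\,\theta_{a^{-1}}^*g_\infty(t)$ follows formally from the multiplicative property of $\theta_s$, so there is no separate ``verification of the soliton identity'' step as you propose---once the limit exists it is automatically an expanding soliton.
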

This is analogous to the construction of expanding Ricci solitons in \cite{lott2}. We also compute the time-rescaled Gromov-Hausdorff limits of our solutions and observe collapse to points, circles, and curves of high genus.
\begin{thrm}\label{grohau}
Let $g(\cdot)$ be a locally homogeneous solution of pluriclosed flow on a
compact
complex surface $(M,J)$ which exists on the interval $[0,\infty)$ and suppose
that $(M,J)$ is not a Hopf surface. Let $\hat{g}(t)=\frac{g(t)}{t}$.\\
1. If the surface is a torus, hyperelliptic, or Kodaira surface, then the family
$(M,\hat{g}(t))$ converges as $t\rightarrow\infty$ to a point in the
Gromov-Hausdorff sense.\\
2. If the surface is an Inoue surface, then the family $(M,\hat{g}(t))$
converges as $t\rightarrow\infty$ to a circle in the Gromov-Hausdorff sense and
moreover the length of this circle depends only on the complex structure of the
surface.\\
3. If the surface is a properly elliptic surface where the genus of the base
curve is at least 2, then the family $(M,\hat{g}(t))$ converges as
$t\rightarrow\infty$ to the base curve with a metric of constant curvature.\\
4. If the surface is of general type, then the family $(M,\hat{g}(t))$ converges
as $t\rightarrow\infty$ to a product of Kahler-Einstein metrics on $M$.
\end{thrm}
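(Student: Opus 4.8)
\emph{Proof proposal.} The plan is to exploit local homogeneity to reduce the flow to a finite-dimensional system and then read off the collapsing behaviour of the rescaled family $\hat g(t)=g(t)/t$ directly from the asymptotics supplied by Theorem~\ref{soli}. Passing to the universal cover, I would identify $(\tilde M,J)$ with a Lie group $G$ carrying a left-invariant complex structure and write $M=\Gamma\backslash G$ for a lattice $\Gamma$; the invariant metrics $g(t)$ then correspond to a curve of inner products on the Lie algebra $\la$, and (\ref{pcf}) becomes an ODE on this finite-dimensional space. By Theorem~\ref{soli} the solution is asymptotic to an expanding soliton, so there is a one-parameter family of automorphisms $\phi_t\in\mathrm{Aut}(G)$ with $\hat g(t)=\phi_t^\ast g_\infty+o(1)$ for a fixed invariant metric $g_\infty$ on $G$. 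The isometry $\phi_t$ identifies $(M,\hat g(t))$ with $(\phi_t(\Gamma)\backslash G,\,g_\infty)$ up to an $o(1)$ error, which reduces the entire problem to understanding how the conjugated lattice $\phi_t(\Gamma)$ degenerates inside the fixed homogeneous space $(G,g_\infty)$ as $t\to\infty$.

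The second step is to organize this degeneration by growth rate. Writing the generator of $\phi_t$ as a derivation $D$ of $\la$, I would decompose $\la=\la_-\oplus\la_0$ according to the sign of the eigenvalues of $D$ (equivalently, according to whether the corresponding components of $g(t)$ grow sublinearly or linearly in $t$); since $\hat g$ converges, no eigenvalue is positive. Along $\la_-$ the generators of $\phi_t(\Gamma)$ have $g_\infty$-length tending to $0$, so these directions collapse, while along $\la_0$ the lattice spacing stays bounded below and these directions survive. Because the spaces are homogeneous one can make this precise without heavy collapse theory: explicit $\varepsilon$-nets built from the fundamental domain show that $(M,\hat g(t))$ converges in the Gromov--Hausdorff sense to the quotient $N\backslash G$, where $N=\exp\la_-$ is the collapsing subgroup (normal, since $\la_-$ is an ideal by the derivation property), equipped with the submersion metric induced by $g_\infty$ on the complementary subalgebra $\la_0$. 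Thus the limit is always a homogeneous space of dimension $\dim\la_0$, and the remaining task is to compute $\la_0$ and its induced metric in each case.

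The third step is the case analysis, carried out from the explicit presentations of $G$, $\la$, and $D$. For the torus, hyperelliptic, and Kodaira surfaces $G$ is abelian or nilpotent, the soliton is (asymptotically) flat, and $D$ is negative definite, so $\la_0=0$ and the family collapses to a point, giving part~1. For the Inoue surfaces $G$ is a solvable group fibering over $\arr$ with exponentially acting monodromy; exactly the base $\arr$-direction is neutral under $D$, so $\la_0$ is one-dimensional and the limit is a circle whose length is the covolume of $\Gamma$ in that direction, an invariant determined by the eigenvalues of the defining monodromy matrix and hence by the complex structure alone, giving part~2. For a non-K\"ahler properly elliptic surface the linearly growing directions are exactly those tangent to the genus $\ge 2$ base, on which the soliton restricts to the constant-curvature K\"ahler--Einstein metric, while the elliptic fibre collapses, giving part~3. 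Finally, for surfaces of general type $G$ is a product of disks, the flow reduces to K\"ahler--Ricci flow, and since the Ricci form is scale invariant the K\"ahler--Einstein metric grows purely linearly with $D=0$; nothing collapses and $\hat g(t)$ converges to the product K\"ahler--Einstein metric, giving part~4.

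The main obstacle will be the Inoue and properly elliptic cases, where the collapse is genuine and anisotropic. Two points require care. First, one must show that the relevant distance functions stay controlled along $\la_-$ so that the Gromov--Hausdorff limit is the claimed smooth quotient rather than a degenerate object, which for the solvmanifolds amounts to controlling the interaction between the contracting monodromy and the lattice. Second, one must pin down the \emph{exact} limiting metric on $\la_0$, verifying that the surviving circle length and the base curvature are the asserted complex-structure invariants rather than merely finite positive constants; this forces one to track the precise asymptotic constants in the soliton normalization of Theorem~\ref{soli}, not only the leading growth exponents.
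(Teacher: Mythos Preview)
Your approach is more structural than the paper's, which simply computes the Bismut--Ricci form and the resulting ODE for each Lie group, extracts the explicit asymptotics of $x(t),y(t),z(t)$, and then builds $\varepsilon$--Gromov--Hausdorff approximations by hand (a projection $\pi$ to the intended limit together with a rough section).  In the paper's logic Theorem~\ref{soli} is proved \emph{after} Theorem~\ref{grohau}, using those same asymptotics, so the soliton is an output of the ODE analysis rather than an input to the collapsing argument; your reorganization is not circular, but it does mean you still have to do the same ODE work before you can invoke Theorem~\ref{soli}.

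There is, however, a genuine gap in the framework, and it surfaces exactly in the two cases you flag as delicate.  First, the blowdown diffeomorphisms $\theta_s$ of Theorem~\ref{soli} are \emph{not} in general automorphisms of $G$, so there is no derivation $D$ of $\la$ to appeal to.  For $\tilde{SL_2}\arr\times\arr$ the maps $\psi_s$ rescale only the $X_3,X_4$ coordinates, which is not a group automorphism; correspondingly your candidate collapsing subspace $\la_-=\mathrm{span}\{X_3,X_4\}$ is \emph{not} an ideal (one has $[X_3,X_2]=X_1$), and the sentence ``$\la_-$ is an ideal by the derivation property'' simply fails here.  Second, for the Inoue surfaces your own definition of $\la_0$ (the directions along which $g(t)$ grows linearly) gives a \emph{two}-dimensional space: for $S_A$ one has $x$ constant and $y\sim 12a^2t$, so both $X_3$ and $X_4$ survive under $\hat g(t)$, yet the Gromov--Hausdorff limit is a circle, not a $2$-torus.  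The missing mechanism is dynamical rather than metric: the leaves of the $\mathrm{span}\{X_1,X_2\}$ foliation are \emph{dense} in each $T^3$ fibre (this is where the ``no complex curves'' property of Inoue surfaces enters), so the fibre diameter tends to zero even though one of its three tangent directions does not shrink.  Your quotient $N\backslash G$ with $N=\exp\la_-$ cannot detect this, and the assertion ``$\la_0$ is one-dimensional'' in your Inoue paragraph is not what your own decomposition actually produces.
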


\emph{Remark:} We note that homogeneous solutions
of pluriclosed flow on Inoue and non-K\"ahler properly elliptic surfaces have
similar asymptotics and Gromov-Hausdorff limits as the example solutions to Chern-Ricci
flow on these surfaces considered in \cite{tw1}, and our arguments for the Gromov-Hausdorff limits in these cases are the same as in \cite{tw1}.

The organization of the paper is as follows. In Section 2 we will provide
background discussion and calculations for the homogeneous Hermitian geometries
considered throughout the paper. In section 3 we will analyze the long time
behavior of the flow for each of the geometries in section 2 and prove Theorem
\ref{grohau}. In Section 4 we complete the proof of Theorem \ref{soli} by performing
blowdown limits on the solutions of Section 3. We conclude the paper with a discussion of future work which would build off the results of this paper.

\emph{Acknowledgments:} The author would like to thank Jeff Streets for introducing him to this subject and for many helpful discussions.

\section{Background}

\subsection{Hermitian Connections and Curvature}

Let $(M^{2n},g,J)$ be a Hermitian manifold with fundamental 2-form $\omega$.
With a condition on the torsion, Gauduchon \cite{ga1} has shown that there is a
canonical 1-dimensional family of Hermitian connections $\nabla^\tau$ on $M$.
When $\tau=1$ we obtain the Chern connection $\nabla^c=\nabla^1$. It is defined
by
\[
g(\nabla^c_XY,Z) = g(\nabla^g_XY,Z)-\frac{1}{2}d\omega(JX,Y,Z)
\]
where $\nabla^g$ is the Levi-Civita connection of $g$. In local holomorphic
coordinates
the Chern connection $\nabla^{c} $ has coefficients
\[
\Gamma^{k}_{ij}=g^{\overline{l}k}g_{j\overline{l},i}
\]
where 
\[
g_{j\overline{l},i} = \frac{\partial}{\partial z^i} g_{j\overline{l}}.
\]
If $R^c(X,Y)Z=([\nabla^{c}_X,\nabla^{c}_Y]-\nabla^{c}_{[X,Y]})Z$ is the $(3,1)$
curvature tensor of the Chern connection, its Ricci form is defined by
\[
\rho^{c}_\omega = \frac{1}{2} \underset{i}\Sigma R^c(X,Y,Je_i,e_i).
\]
Here $e_i$ is an orthonormal basis of the (real) tangent space. In local
holomorphic coordinates the Ricci form of the Chern connection is given by
\[
\rho^{c}_\omega = -\frac{\ii }{2}\partial\overline{\partial}\log\det g.
\]
In a more invariant form, if $\omega$ and $\omega_0$ are two hermitian metrics,
their Chern-Ricci forms are related by
\[
\rho^{c}_\omega = \rho^{c}_{\omega_0} - \ii \partial
\overline{\partial}\log\frac{\omega^n}{\omega^{n}_{0}},
\]
where $\omega^n=\omega\wedge\omega\wedge\ldots\wedge\omega$.
This formula is useful for computing with invariant metrics. For example, it
is immediate that $\rho^c_\omega=\rho^c_{\omega_0}$ whenever $\omega$ and
$\omega_0$ have proportionally constant volume forms. 

The Bismut connection is defined by
\[
g(\nabla^{b}_{X}Y,Z) = g(\nabla^{g}_{X}Y,Z) + \frac{1}{2}d\omega(JX,JY,JZ)
\]
and is the connection corresponding to $\tau=-1$ in the family of Gauduchon. As
with the Chern connection, there is an associated Bismut-Ricci form 
\[
\rho^{b}_\omega =  \frac{1}{2} \underset{i}\Sigma R^b(X,Y,Je_i,e_i).
\]
The Bismut-Ricci and Chern-Ricci forms are related by
\[
\rho^{b}_\omega =\rho^{c}_\omega -dd^{*}\omega
\]
and so we obtain
\begin{lemma}
Pluriclosed flow is given by
\[
\frac{d\omega}{dt}=\partial\partial^{\ast}\omega+\overline{
\partial\partial}^{\ast}\omega+\frac{\ii }{2}\partial\overline{\partial}
\log\det g =-(\rho^{b}_{\omega})^{(1,1)}.
\]
\end{lemma}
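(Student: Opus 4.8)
The plan is to establish the stated identity by proving the two displayed relations the text has already set up, so that the lemma becomes a formal consequence of combining them. Concretely, I would take as given the two facts recalled just above the statement: first, that the Chern--Ricci form has the local expression
\[
\rho^{c}_\omega = -\frac{\ii }{2}\partial\overline{\partial}\log\det g,
\]
and second, that the Bismut-- and Chern--Ricci forms are related by $\rho^{b}_\omega =\rho^{c}_\omega - dd^{*}\omega$. The goal is then to show that the right-hand side of the pluriclosed flow equation (\ref{pcf}) equals $-(\rho^{b}_{\omega})^{(1,1)}$, which amounts to an identification of $(1,1)$-parts after substituting these two relations.

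First I would substitute the Chern--Ricci identity to rewrite the zeroth-order term of (\ref{pcf}): since $\rho^{c}_\omega = -\frac{\ii }{2}\partial\overline{\partial}\log\det g$, the term $\frac{\ii }{2}\partial\overline{\partial}\log\det g$ appearing in the flow is exactly $-\rho^{c}_\omega$. Next I would address the second-order terms $\partial\partial^{\ast}\omega+\overline{\partial}\overline{\partial}^{\ast}\omega$. The natural strategy is to decompose $dd^{*}\omega$ by Hodge type using the Hermitian decomposition $d=\partial+\overline{\partial}$ and $d^{*}=\partial^{*}+\overline{\partial}^{*}$, and to extract its $(1,1)$-component. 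Writing out $dd^{*}\omega = (\partial+\overline{\partial})(\partial^{*}+\overline{\partial}^{*})\omega$ and collecting terms by bidegree, the $(1,1)$-part should be precisely $\partial\partial^{*}\omega + \overline{\partial}\overline{\partial}^{*}\omega$, using that $\partial\overline{\partial}^{*}\omega$ and $\overline{\partial}\partial^{*}\omega$ carry the wrong bidegrees to contribute. This step relies on the pluriclosed condition $\partial\overline{\partial}\omega=0$ (equivalently its conjugate), which governs the interaction of $\omega$ with the operators and is what makes the mixed cross-terms drop out or combine correctly.

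Assembling the pieces, I would take the $(1,1)$-projection of the identity $\rho^{b}_\omega = \rho^{c}_\omega - dd^{*}\omega$, giving $(\rho^{b}_\omega)^{(1,1)} = (\rho^{c}_\omega)^{(1,1)} - (dd^{*}\omega)^{(1,1)}$. Since $\rho^{c}_\omega$ is already of type $(1,1)$, its projection is itself, and substituting both the Chern--Ricci local formula and the computed $(1,1)$-part of $dd^{*}\omega$ yields
\[
-(\rho^{b}_\omega)^{(1,1)} = -\rho^{c}_\omega + (dd^{*}\omega)^{(1,1)} = \frac{\ii }{2}\partial\overline{\partial}\log\det g + \partial\partial^{*}\omega + \overline{\partial}\overline{\partial}^{*}\omega,
\]
which is exactly the right-hand side of (\ref{pcf}). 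I expect the main obstacle to be the careful bookkeeping of Hodge bidegrees in the expansion of $dd^{*}\omega$ and the verification that, under the pluriclosed hypothesis, the off-type and cross terms vanish or cancel so that only $\partial\partial^{*}\omega + \overline{\partial}\overline{\partial}^{*}\omega$ survives in bidegree $(1,1)$; this is where the pluriclosed condition does its essential work and where a sign or conjugation error is easiest to make.
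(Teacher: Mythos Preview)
Your approach is correct and matches the paper's, which simply presents the lemma as an immediate consequence of the two relations stated just before it (``and so we obtain''). One correction, though: the bidegree decomposition of $dd^{*}\omega$ does not actually require the pluriclosed hypothesis. Since $\omega$ is of type $(1,1)$, the forms $\partial^{*}\omega$ and $\overline{\partial}^{*}\omega$ have types $(0,1)$ and $(1,0)$ respectively, so $\partial\overline{\partial}^{*}\omega$ is $(2,0)$ and $\overline{\partial}\partial^{*}\omega$ is $(0,2)$ purely by type; hence $(dd^{*}\omega)^{(1,1)}=\partial\partial^{*}\omega+\overline{\partial}\,\overline{\partial}^{*}\omega$ holds for any Hermitian metric. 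The pluriclosed condition enters earlier---in the equivalence of the original Streets--Tian flow with equation~(\ref{pcf})---not in this algebraic identity, so your anticipated ``main obstacle'' does not arise.
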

\subsection{Homogeneous Hermitian Geometries}
In this section we will give a short summary of homogeneous Hermitian geometry in complex dimension two. We will begin with a few notions from homogeneous Riemannian geometry.
\begin{defi}
A Riemannian manifold $(M^{n},g)$ is \emph{locally homogeneous} if for any two
points
$x,y\in M$ there exists an isometry $\theta:U\rightarrow V$ between open
neighborhoods $x\in U$ and $y\in V$ such that $\theta(x)=y$. $(M,g)$ is
\emph{globally homogeneous} if the locally defined isometries $\theta$ are
defined on all of $M$. 
\end{defi}
Given that the universal cover of any complete, locally homogeneous Riemannian
manifold is globally homogeneous, in order to classify complete, locally
homogeneous Riemannian manifolds it is enough to classify the simply connected,
complete, homogeneous Riemannian manifolds together with their co-compact
lattices. With a globally homogeneous Riemannian manifold the isometry group
$Iso(M,g)$
acts transitively on $M$. Then the isotropy groups
\[
G_x=\{\theta \in Iso(M,g)\ \vline \ \theta(x)=x \}
\]
are all conjugate, isomorphic to closed subgroups of $O(n)$, and $M$ is
diffeomorphic to the quotient $G/G_x$. A first step toward the classification of such spaces,
then, is to find closed subgroups of $O(n)$ of dimension $k$ and embeddings of
these subgroups into unimodular Lie groups of dimension $n+k$. That the same manifold $M$ can arise from different quotients in this way is addressed with the following definition.
\begin{defi}
A (minimal) \emph{model geometry} is\\
1. A complete, simply connected, homogeneous Riemannian manifold $(M,g)$ where
the metric $g$ is the pullback of a metric on some compact manifold whose
universal cover is $M$, together with\\
2. A closed subgroup $G$ of $Iso(M,g)$ acting transitively on $M$ such that $G$
is minimal with this property.
\end{defi}
A list of the four dimensional model geometries can be found in \cite{ijl} in
the context of homogeneous Ricci flows and is summarized below. Note that we
have omitted the groups $G$. 
\begin{prop}\label{model}
The four-dimensional model geometries are either\\
1. Products of two-dimensional model geometries $S^2,\rtwo$ and $H^2$, \\
2. $S^4,\cee P^2,H^3\times\arr,\cee H^2,H^4$, or\\
3. A simply connected 4-dimensional Lie group which has a co-compact lattice.\\
The metrics in the first and second cases are products of canonical, well known
Einstein metrics, while the metrics on each of the Lie groups are left
invariant.
\end{prop}
Here $H^n$ is hyperbolic space of dimension $n$, $\cee P^n$ is complex
projective space with a Fubini-Study metric, and $\cee H^n$ is complex
hyperbolic space: an open ball in $\cee^n$ with the Bergman metric.
\begin{defi}
A complex structure $J$ is \emph{compatible} with a model geometry $(M,g,G)$ if
$G$ acts by holomorphic isometries. If $J$ is compatible we say that $(M,g,J)$
is \emph{homogeneous Hermitian}.
\end{defi}
Wall \cite{wa1} classified the (integrable) complex structures up to isomorphism
and conjugation which are compatible with a given 4-dimensional model geometry.
\begin{prop}
The 4-dimensional model geometries which admit a compatible integrable
complex structure are either\\
1. Products of the two-dimensional model geometries $S^2=\cee P^1$, $\arr
^2=\cee$,
and $H^2=\cee H^1$, with canonical product complex structures,\\
2. $\cee P^2$ and $\cee H^2$, with canonical complex structures, or\\
3. A 4-dimensional Lie group with a left invariant integrable complex structure
\end{prop}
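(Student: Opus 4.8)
The plan is to exploit the fact that a compatible complex structure must be invariant under the transitive action of $G$, so that it is completely determined by its value at a single point. Concretely, fix a base point $p\in M$ and let the isotropy group $G_p$ act on $T_pM$ by the isotropy representation. Since $G$ acts by holomorphic isometries, the value $J_p$ is an orthogonal complex structure on $(T_pM,g_p)$ commuting with the isotropy representation, and conversely any such $J_p$ extends by the $G$-action to a $G$-invariant almost complex structure. Thus the first step reduces the existence of a compatible almost complex structure to a linear-algebra problem: decide for which geometries the commutant of the isotropy representation inside $O(T_pM)$ contains an element squaring to $-\mathrm{Id}$. Having produced an invariant almost complex structure, the second, separate step is to verify integrability, i.e. the vanishing of the Nijenhuis tensor, which for a homogeneous structure is again an algebraic condition at $p$.

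I would then run this scheme through the three families of Proposition \ref{model}. For the products of two-dimensional geometries, each factor $S^2=\cee P^1$, $\arr^2=\cee$, and $H^2=\cee H^1$ carries a canonical invariant complex structure, and the product structure on any pairing is invariant and integrable; this yields family 1. For the remaining symmetric geometries I would argue directly from the isotropy representation: $\cee P^2$ and $\cee H^2$ are Hermitian symmetric and carry their standard invariant complex structures, giving family 2, while $S^4$, $H^4$, and $H^3\times\arr$ are excluded. For $S^4$ and $H^4$ the isotropy group is the full $SO(4)$ acting irreducibly and of real type on $\arr^4$, so by Schur its commutant in $GL(4,\arr)$ is just the scalars, which contains no square root of $-\mathrm{Id}$; for $H^3\times\arr$ the isotropy $SO(3)$ fixes the $\arr$-summand and acts irreducibly on the inequivalent $\arr^3$-summand, so any invariant endomorphism must preserve the splitting and cannot interchange the two factors, leaving no room for a complex structure on the odd-dimensional piece.

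For the Lie-group geometries of family 3 the isotropy is trivial, so every left-invariant almost complex structure is automatically $G$-invariant and corresponds exactly to a linear complex structure $J$ on the Lie algebra $\mathfrak{g}$. Here integrability is equivalent to the classical condition that the $(1,0)$-eigenspace $\mathfrak{g}^{1,0}\subset\mathfrak{g}\otimes\cee$ be a complex subalgebra of the complexification. The substantive content is then to determine which four-dimensional Lie algebras admit such a $J$; this is precisely the classification carried out by Wall \cite{wa1}, which I would invoke to conclude family 3.

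I expect the main obstacle to be this last case: unlike the symmetric geometries, where existence or non-existence of an invariant $J$ is settled by a short representation-theoretic computation of a commutant, deciding integrability of left-invariant almost complex structures on four-dimensional Lie algebras requires a genuine case-by-case analysis of the subalgebra condition across the list of unimodular Lie groups admitting co-compact lattices. This is the technical heart of Wall's work, and rather than reproduce it I would rely on that reference, checking only that the geometries surviving the isotropy analysis are consistent with his list.
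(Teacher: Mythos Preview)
Your proposal is correct and follows essentially the same route as the paper: both use that a compatible $J$ must commute with the isotropy representation to rule out $S^4$, $H^4$, and $H^3\times\arr$ (the paper phrases this uniformly via the $SO(3)$ subgroup rather than your separate $SO(4)$/Schur and $SO(3)$/splitting arguments), accept the canonical structures on the Hermitian symmetric and product cases, and reduce the Lie group case to the subalgebra condition on $\la^{1,0}$, deferring the case-by-case work to Wall \cite{wa1}. The only notable addition in the paper is an explicit reduction you omit: since $\la^{1,0}$ is a two-dimensional complex Lie algebra it is either abelian or affine, which turns integrability into the concrete system $a_1^i a_2^j c_{ij}^k=\epsilon a_2^k$ used to generate the subsequent list of complex structures.
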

\begin{proof}
We will give a brief sketch; for details see \cite{wa1}. Let $J$ be a compatible
integrable complex structure on a four-dimensional model geometry $(M,g,G)$.
First, note that $J$ must commute with the action of the isotropy group on the
tangent space at each point. For the model geometries $S^4$, $H^3\times\arr$,
and $H^4$ where the isotropy group contains a copy of $SO(3)$, there can be no
such commuting $J$. The remaining cases are either Lie groups, $\cee P^2$, $\cee
H^2$, or products of two-dimensional models. The standard complex structures on
$\cee P^2$, $\cee H^2$, and the product cases are compatible with the model
geometry structure and are the unique compatible complex structures up to
automorphism and conjugation. The Lie group cases are treated as follows.

A compatible complex structure on a Lie group $G$ is determined by its action on
the Lie algebra $\la$ of left invariant vector fields. Note that $J$ is
integrable if, and only if, the $\ii $-eigenspace of the complexification
$J^\cee:\la^\cee \rightarrow\la^\cee$ is a Lie subalgebra of $\la^\cee$. Let
$e_i$ be a basis of $\la$ and let $c_{ij}^k$ be the structure constants of $\la$
with respect
to such a basis, so that
\[
[e_i,e_j]=c_{ij}^ke_k.
\]
Because there are only two 2-dimensional complex Lie algebras up to
isomorphism, if $J$ is integrable there is a basis $Z_1,Z_2$ of the
$\ii $-eigenspace of $J^\cee$ where either $[Z_1,Z_2]=0$ or
$[Z_1,Z_2]=Z_2$. Writing $Z_i=a_i^je_j$ for some $a_i^j\in\cee$, the
integrability condition becomes
\begin{align}\label{scc}
a_1^ia_2^jc_{ij}^k=\epsilon a_2^k
\end{align}
where $\epsilon=0$ or $1$ depending on whether or not the $\ii $-eigenspace is
abelian. Once a solution $a_i^j$ to the above equations is known, if we let
\[
X_1=(\Re a_1^j)e_j, X_2=-(\Im a_1^j)e_j, X_3=(\Re a_2^j)e_j, X_4=-(\Im
a_2^j)e_j,
\] 
then an integrable complex structure is given by $JX_1=X_2$ and $JX_3=X_4$. As
there are possibly many solutions to the above system, automorphisms of $\la$
substantially reduce the number of cases that one needs to consider. Doing this
on a case by case basis for each of the Lie algebras in Proposition
(\ref{model}) completes the proof and gives the list of complex structures.
\end{proof}
We now list the complex structures and corresponding groups of the previous
proposition. In each case we will use a left invariant basis $Z_i$ of $T^{1,0}M$
and give the Lie brackets with respect to this basis; the complex structure is
$J^\cee Z_i=\ii Z_i$. When there is more than one compatible complex structure
we list a family of Lie brackets with respect to such a basis.
\begin{enumerate}
\item On $\arr^4$ there is a unique compatible complex structure. Quotients by a
cocompact lattice in this case give complex tori.
\item On $\tilde E(2)\times\arr$ there is a unique compatible complex structure.
Here $\tilde E(2)$ is the universal covering group of the rigid motions of the
Euclidean plane. The non-vanishing Lie brackets with respect to a $T^{1,0}$
frame are
\[[Z_1,Z_2]=Z_1\gap [Z_1,\overline{Z_2}]=-Z_1\]
and compact quotients in this case are hyperelliptic surfaces.
\item On $\arr\times SU_2$ there is a one parameter family of compatible complex
structures (parameterized by $\alpha\in\arr$) with brackets
\[[Z_1,Z_2]=Z_2\gap[Z_1,\overline{Z_2}]=-\overline{Z_2}\]
\[[Z_2,\overline{Z_2}]=(\ii \alpha-1)Z_1+(\ii \alpha+1)\overline{Z_1}.\]
Here the compact quotients give Hopf surfaces.
\item On $\tilde{SL}_2(\arr)\times\arr$ there is a one parameter family of
compatible complex structures with brackets
\[[Z_1,Z_2]=\ii Z_1\gap[Z_1,\overline{Z_2}]=\ii Z_1\]
\[[Z_1,\overline{Z_1}]=(\ii -\alpha)Z_2+(\ii +\alpha)\overline{Z_2}.\]
Quotients by a cocompact lattice in this case give non-K\"ahler, properly
elliptic surfaces.
\item On $Nil^3\times\arr$ there is a unique compatible complex structure whose
brackets are
\[[Z_1,\overline{Z_1}]=\ii (Z_2+\overline{Z_2}).\]
The quotients in this case form Kodaira surfaces.
\item There is a semi-direct product $Nil^3\rtimes \arr$ with two compatible
complex structures given by
\[[Z_1,Z_2]=\epsilon Z_1\gap[Z_1,\overline{Z_2}]=-\epsilon Z_1,\]
\[[Z_1,\overline{Z_1}]=-\epsilon\ii (Z_2+\overline{Z_2}).\]
where $\epsilon=\pm 1$.
Compact quotients in this case are Kodaira surfaces.
\item There is a family of solvable Lie groups with complex structures
\[[Z_1,Z_2]=\lambda Z_1\gap[Z_1,\overline{Z_2}]=-\lambda Z_1,\]
\[[Z_2,\overline{Z_2}]=2a\ii (Z_2+\overline{Z_2}),\]
where $\lambda=-b+\ii a$ is a complex number. These do not always have a
cocompact lattice; when such a lattice exists the quotient forms an Inoue
surface.
\item The solvable Lie group $Sol_1^4$ has two compatible complex structures.
The first is given by
\[[Z_1,Z_2]=-Z_2\gap[Z_1,\overline{Z_2}]=-Z_2,\]
\[[Z_1,\overline{Z_1}]=\overline{Z_1}-Z_1,\]
and we will call $Sol_1^4$ with this complex structure $Sol_1$.
The second is given by
\[[Z_1,Z_2]=-Z_2\gap[Z_1,\overline{Z_2}]=-Z_2,\]
\[[Z_1,\overline{Z_1}]=\overline{Z_1}-Z_1+Z_2-\overline{Z_2},\]
which we will call $Sol_1'$. The quotients in each these cases form Inoue surfaces.
\end{enumerate}
\subsection{The Bismut-Ricci Form of a Left Invariant Metric}
We next record a basic lemma concerning the left invariant Hermitian metrics that we will consider on the previous Lie groups.
\begin{lemma}
Let $(M^{4},J)$ be a 4-dimensional Lie group with a left invariant complex
structure. With a basis $Z_i$ of left invariant $T^{1,0}M$ vector fields, any
Hermitian metric $g$ is determined by complex valued functions
$x=g(Z_1,\overline{Z_1})$, $y=g(Z_2,\overline{Z_2})$, and
$z=g(Z_1,\overline{Z_2})$ satisfying $x,y>0$ and $xy-|z|^2>0$. If $\zeta^i$ is
the dual basis to the $Z_i$, the K\"ahler form
$\omega(\cdot,\cdot)=g(J\cdot,\cdot)$ is given by
\[\omega=\ii
(x\zeta^{1\overline{1}}+y\zeta^{2\overline{2}}+z\zeta^{1\overline{2}}+\overline{
z}\zeta^{2\overline{1}}),\]
where $\zeta^{i\overline{j}}=\zeta^i\wedge\zeta^{\overline{j}}$ is the wedge
product.
The metric $g$ is left invariant if and only if $x$, $y$, and $z$ are constant
on $M$.
\end{lemma}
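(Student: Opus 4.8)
The plan is to reduce the statement to elementary Hermitian linear algebra on the complexified tangent space $T_pM\otimes\cee=T^{1,0}_pM\oplus T^{0,1}_pM$ at a point, and then to globalize using left-invariance of the frame. First I would extend $g$ complex-bilinearly and record the two constraints forced by $g$ being a $J$-compatible real metric. Applying the compatibility identity $g(JU,JV)=g(U,V)$ to $U,V\in T^{1,0}$, where $J$ acts as multiplication by $\ii$, gives $-g(Z_i,Z_j)=g(Z_i,Z_j)$, so the pure components $g(Z_i,Z_j)$ and, by conjugation, $g(\overline{Z_i},\overline{Z_j})$ all vanish. Thus only the mixed entries $h_{i\overline{j}}:=g(Z_i,\overline{Z_j})$ survive, and reality of $g$ together with its symmetry yields $\overline{h_{i\overline{j}}}=h_{j\overline{i}}$, i.e. $(h_{i\overline{j}})$ is a Hermitian matrix. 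Writing $x=h_{1\overline{1}}$, $y=h_{2\overline{2}}$, $z=h_{1\overline{2}}$ then forces $x,y$ real and $h_{2\overline{1}}=\overline{z}$, so $g$ is completely determined by the triple $(x,y,z)$.

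Next I would establish the positivity constraints. For a real tangent vector $X$ with $(1,0)$-part $Z=\tfrac12(X-\ii JX)$, a direct computation using $g(JX,JX)=g(X,X)$ and the symmetry of $g$ shows $g(Z,\overline{Z})=\tfrac12 g(X,X)$; hence positive-definiteness of the Riemannian metric $g$ is equivalent to positive-definiteness of the Hermitian form $(h_{i\overline{j}})$. In the two-dimensional case the leading principal minors of a Hermitian matrix are positive precisely when $x>0$, $y>0$, and $xy-|z|^2>0$, which are exactly the claimed inequalities.

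It then remains to identify $\omega$ and to treat left-invariance. I would verify the formula by evaluating both sides on the frame: since $JZ_i=\ii Z_i$ and the pure components vanish, $\omega(Z_i,\overline{Z_j})=g(\ii Z_i,\overline{Z_j})=\ii\,h_{i\overline{j}}$, while $(\zeta^k\wedge\zeta^{\overline{l}})(Z_i,\overline{Z_j})=\delta^k_i\delta^l_j$; comparing these, together with the vanishing of $\omega$ on pairs of type $(1,0)$ or $(0,1)$ and the conjugate relation on $(\overline{Z_i},Z_j)$, gives $\omega=\ii\,h_{k\overline{l}}\,\zeta^k\wedge\zeta^{\overline{l}}=\ii(x\zeta^{1\overline{1}}+y\zeta^{2\overline{2}}+z\zeta^{1\overline{2}}+\overline{z}\,\zeta^{2\overline{1}})$. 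Finally, because the $Z_i$ and hence the dual forms $\zeta^i$ are left-invariant, the functions $x,y,z$ are exactly the components of $g$ in a left-invariant frame, so $g$ is left-invariant if and only if each $g(Z_i,\overline{Z_j})$ is constant, i.e. if and only if $x,y,z$ are constant. No step here presents a genuine obstacle; the only point demanding care is the bookkeeping of the factors of $\ii$ and of the conjugation relation $h_{2\overline{1}}=\overline{z}$, so that the signs in the stated expression for $\omega$ come out correctly.
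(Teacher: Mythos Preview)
Your argument is correct in every step; the reduction to the Hermitian matrix $(h_{i\overline{j}})$, the vanishing of the pure components via $J$-compatibility, the positivity via principal minors, the verification of $\omega$ on the frame, and the left-invariance characterization are all handled cleanly. The paper, however, gives no proof of this lemma at all: it is stated as a basic bookkeeping fact and immediately followed by the citation of the Bismut--Ricci formula from \cite{ve1}, so there is nothing to compare your approach against beyond noting that you have supplied the routine linear-algebra verification the author left implicit.
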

A formula for the Ricci form associated to each of the canonical connections $\nabla^\tau$ for a left invariant Hermitian metric was computed in \cite{ve1}, the following proposition concerns the special case of the Bismut connection.
\begin{prop}\label{vez}\cite{ve1}
Let $(M^{2n},g,J)$ be a Lie group with a left invariant Hermitian structure.
Then the Bismut-Ricci form can be written as $\rho^b=d\eta$, where
\[\eta=\eta_i\zeta^i+\eta_{\overline{i}}\zeta^{\overline{i}}\]
\[\eta_i=\ii c_{ij}^j-\ii
g^{\overline{j}k}c_{k\overline{j}}^{\overline{l}}g_{i\overline{l}}\]
and $c_{ij}^k, c_{i\overline{j}}^{\overline{k}}$ are the structure constants of
the Lie algebra with respect to the $Z_i,\overline{Z_i}$
\end{prop}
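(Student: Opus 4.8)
The plan is to realize $\rho^b$ as the curvature of a connection on a line bundle, which makes exactness automatic and reduces the whole problem to identifying a primitive. Because the Bismut connection $\nabla^b$ preserves both $g$ and $J$, it preserves the splitting $T^\cee M=T^{1,0}M\oplus T^{0,1}M$ and therefore induces a Hermitian connection on the anticanonical line bundle $\Lambda^n T^{1,0}M$. Writing the Bismut connection forms in the left invariant frame as $\nabla^b Z_j=\sigma^k_j\otimes Z_k$, the induced connection on $\Lambda^n T^{1,0}M$ has connection $1$-form equal to the trace $\sigma:=\sigma^k_k$, and one checks directly from the definition $\rho^{b}_\omega=\frac{1}{2}\sum_i R^b(X,Y,Je_i,e_i)$ that $\rho^b$ equals $\ii$ times $\sigma$ differentiated, up to a universal real constant: since $R^b(X,Y)$ is skew-Hermitian on $T^{1,0}M$, its complex trace is imaginary, which accounts for the factor $\ii$ appearing in $\eta$. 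Thus $\rho^b=d\eta$ holds for purely structural reasons, for an arbitrary left invariant Hermitian metric, and the content of the proposition is the explicit evaluation $\eta=\ii\,\sigma$ in the given frame.

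First I would compute the Bismut connection forms algebraically. For left invariant vector fields the Levi-Civita connection is given by the Koszul formula $2g(\nabla^g_X Y,Z)=g([X,Y],Z)-g([Y,Z],X)+g([Z,X],Y)$, with no derivative terms since $g$ is constant on the frame, so $\nabla^g$ is determined entirely by the structure constants $c^k_{ij},c^{\overline{k}}_{i\overline{j}}$ and the metric coefficients $x,y,z$ of the previous lemma. Adding the torsion correction $\frac{1}{2}d\omega(JX,JY,JZ)$ from the definition of $\nabla^b$ gives each $\sigma^k_j$ explicitly. I would then contract $\sigma^k_k$ against $Z_i$ and $\overline{Z_i}$ to read off $\eta_i$ and $\eta_{\overline i}$. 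I expect the holomorphic part of the Koszul formula to produce the term $\ii c_{ij}^j$, i.e. the trace of the adjoint action on $T^{1,0}M$, while the mixed brackets $c^{\overline{l}}_{k\overline{j}}$ entering through the torsion and the metric contractions produce the term $-\ii g^{\overline{j}k}c_{k\overline{j}}^{\overline{l}}g_{i\overline{l}}$.

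As an independent check I would use the relation $\rho^{b}_\omega=\rho^{c}_\omega-dd^{*}\omega$ already recorded above. Since $\rho^{c}_\omega$ is the curvature of the Chern connection on the same line bundle, it is $d$ of $\ii$ times the trace of the Chern connection form, which in the left invariant frame contributes precisely the $\ii c_{ij}^j$ piece; and $-dd^{*}\omega=d(-d^{*}\omega)$ contributes a primitive built from $d^{*}\omega$, a left invariant $1$-form whose coefficients are exactly the metric contractions of the mixed structure constants, yielding the second term. Matching this decomposition against the direct computation of $\sigma^k_k$ fixes all constants and signs.

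The main obstacle is the bookkeeping in the non-holomorphic frame. Because $[Z_i,Z_j]$ and $[Z_i,\overline{Z_j}]$ do not vanish, the coframe is not closed under $d$: the Maurer-Cartan equations $d\zeta^k=-\frac{1}{2}c^k_{ij}\zeta^i\wedge\zeta^j-\ldots$ feed structure constants into both $d\sigma$ and $d\eta$, so one cannot simply differentiate coefficient functions as in a coordinate frame. The care required is to separate these frame-curvature contributions from the genuine connection terms and to verify that, after the separation, the curvature is exactly $d\eta$ with the stated $\eta$ and no residual terms. Once the trace $\sigma^k_k$ is computed correctly this is a finite, if delicate, linear-algebra verification; the conceptual content is entirely in the first paragraph, and it is this line-bundle viewpoint that explains why the formula is valid for every left invariant Hermitian metric and not only for the pluriclosed ones.
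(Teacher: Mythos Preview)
The paper does not supply a proof of this proposition; it is quoted directly from \cite{ve1} and used thereafter only as a computational black box. So there is no argument in the paper to compare against.

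That said, your strategy is correct and is the natural one. Since $\nabla^b$ preserves $g$ and $J$, it induces a connection on $\Lambda^n T^{1,0}M$ whose curvature is, up to the factor $\ii$, precisely $\rho^b$; for a left invariant metric the connection $1$-form in the left invariant frame is itself left invariant, so exactness $\rho^b=d\eta$ is automatic and the content is the explicit identification of $\eta_i=\ii\,\sigma^k_k(Z_i)$. The Koszul formula with no derivative terms plus the torsion correction $\frac{1}{2}d\omega(JX,JY,JZ)$ is exactly the right way to evaluate this trace, and your cross-check through $\rho^b=\rho^c-dd^*\omega$ is in fact close to how Vezzoni organizes the computation in \cite{ve1}, where the formula is obtained for the whole Gauduchon family $\nabla^\tau$ simultaneously. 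The only place to be careful is the one you already flagged: in a non-closed coframe the Maurer--Cartan contributions to $d\sigma$ must be separated from the connection terms, and the normalization $\rho^c=-\frac{\ii}{2}\partial\bar\partial\log\det g$ used in this paper must be tracked through so that the constants in $\eta_i=\ii c^j_{ij}-\ii g^{\bar j k}c^{\bar l}_{k\bar j}g_{i\bar l}$ come out on the nose.
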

\section{Pluriclosed Flow on the Model Geometries}
We are now ready to compute the pluriclosed flow equations
\[\frac{d\omega}{dt}=-(\rho^b_\omega)^{(1,1)}\] 
for the homogeneous Hermitian metrics of the previous section and to prove
Theorem \ref{grohau}. Recall that our metrics have the form
\[\omega=\ii
(x\zeta^{1\overline{1}}+y\zeta^{2\overline{2}}+z\zeta^{1\overline{2}}+\overline{
z}\zeta^{2\overline{1}})\]
with respect to the above $T^{1,0}M$ frames.
\subsection{Hyperelliptic Surfaces}
\begin{lemma} Let $\omega$ be a left invariant Hermitian metric on $\tilde
E(2)\times\mathbb{R}$ and let $\zeta^i$ be a $(T^{1,0})^*M$ frame satisfying
\[d\zeta^1=-\zeta^{12}+\zeta^{1\overline{2}}\gap d\zeta^2=0.\]
Then
\[\eta_1=\ii \frac{zx}{xy-|z|^2},\]
\[\rho^b=\ii \frac{zx}{xy-|z|^2}(-\zeta^{12}+\zeta^{1\overline{2}})+conjugates.\]
\end{lemma}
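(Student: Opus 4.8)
The plan is to apply Proposition \ref{vez} directly. It expresses $\rho^b = d\eta$ with $\eta = \eta_i\zeta^i + \eta_{\overline{i}}\zeta^{\overline{i}}$ and gives a closed formula for the coefficients $\eta_i$ in terms of the structure constants and the metric, so the whole computation reduces to three mechanical steps: read off the structure constants of $\tilde E(2)\times\arr$ in the given frame, substitute them into the formula for $\eta_1$, and then differentiate $\eta$.

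First I would extract the structure constants. The coframe relations $d\zeta^1 = -\zeta^{12}+\zeta^{1\overline{2}}$ and $d\zeta^2 = 0$ are equivalent, via the Maurer-Cartan equation $d\zeta^k(Z_A,Z_B) = -\zeta^k([Z_A,Z_B])$, to the brackets recorded for this geometry in Section 2, namely $[Z_1,Z_2] = Z_1$ and $[Z_1,\overline{Z_2}] = -Z_1$. Reading off coefficients, and conjugating to obtain the barred brackets, the only nonzero structure constants are $c_{12}^1 = 1$, $c_{1\overline{2}}^1 = -1$, $c_{\overline{1}\overline{2}}^{\overline{1}} = 1$, and $c_{2\overline{1}}^{\overline{1}} = 1$.

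Next I would substitute into $\eta_i = \ii c_{ij}^j - \ii g^{\overline{j}k}c_{k\overline{j}}^{\overline{l}}g_{i\overline{l}}$ with $i = 1$. The first term vanishes since $c_{11}^1 = c_{12}^2 = 0$. In the second term the only structure constant of the required index type $c_{k\overline{j}}^{\overline{l}}$ that is nonzero is $c_{2\overline{1}}^{\overline{1}} = 1$, so the sum collapses to $-\ii g^{\overline{1}2}g_{1\overline{1}}$. Writing the metric as $g_{1\overline{1}} = x$, $g_{2\overline{2}} = y$, $g_{1\overline{2}} = z$, $g_{2\overline{1}} = \overline{z}$ and inverting the $2\times 2$ Hermitian matrix gives $g^{\overline{1}2} = -z/(xy-|z|^2)$, which yields $\eta_1 = \ii zx/(xy-|z|^2)$ as claimed.

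Finally, since the metric is left invariant the coefficients $\eta_i$ are constant, so $\rho^b = d\eta = \sum_i\eta_i\,d\zeta^i + \sum_i\eta_{\overline{i}}\,d\zeta^{\overline{i}}$. The convenient point is that $d\zeta^2 = 0$, so the possibly nonzero coefficients $\eta_2,\eta_{\overline{2}}$ never enter and only $\eta_1$ is needed; substituting $d\zeta^1 = -\zeta^{12}+\zeta^{1\overline{2}}$ and its conjugate gives the stated expression for $\rho^b$. I expect the only place to slip is bookkeeping: keeping the holomorphic versus anti-holomorphic index types straight in $c_{k\overline{j}}^{\overline{l}}$ so that exactly one term survives, and fixing the convention for the inverse metric $g^{\overline{j}k}$ so that the sign of $g^{\overline{1}2}$ comes out correctly. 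Neither is a genuine obstacle; the content of the lemma lies entirely in the formula of Proposition \ref{vez}.
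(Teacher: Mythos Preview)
Your proposal is correct and follows the same approach as the paper: the paper presents this lemma without a separate proof and, in the corollary immediately after, simply says that a direct calculation from Proposition~\ref{vez} gives the Bismut--Ricci form. Your three-step outline (read off the structure constants, plug into the formula for $\eta_i$, use $d\zeta^2=0$ so that only $\eta_1$ contributes to $\rho^b=d\eta$) is exactly that direct calculation carried out in detail, and your bookkeeping on the inverse metric and the surviving structure constant $c_{2\overline{1}}^{\overline{1}}=1$ is right.
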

\begin{cor}
Pluriclosed flow for a left invariant metric on $\tilde E(2)\times\mathbb{R}$
satisfies
\[x'=y'=0\]
\[z'=-\frac{xz}{xy-|z|^2}.\]
Where $x',y',z'$ denote time derivatives of $x,y,z$. In particular,
$|z|=O(e^{-Ct})$ for some constant $C$ depending on the initial condition.
\end{cor}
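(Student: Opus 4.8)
The plan is to read the flow equations off the preceding Lemma by splitting $\rho^b$ into bidegrees and matching coefficients in the fixed coframe, and then to extract exponential decay of $z$ from the resulting scalar equation. First I would isolate the $(1,1)$-part of $\rho^b$. In the expression $\rho^b=\ii\frac{zx}{xy-|z|^2}(-\zeta^{12}+\zeta^{1\overline{2}})+\text{conjugates}$, the form $\zeta^{12}$ is of type $(2,0)$ and its conjugate is of type $(0,2)$, so both are discarded when we take the $(1,1)$-component; only $\zeta^{1\overline{2}}$ and its conjugate survive. Writing $A=\frac{zx}{xy-|z|^2}$ and using $\overline{\zeta^{1\overline{2}}}=-\zeta^{2\overline{1}}$, this gives $(\rho^b)^{(1,1)}=\ii A\,\zeta^{1\overline{2}}+\ii\overline{A}\,\zeta^{2\overline{1}}$.

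Next, since the coframe $\zeta^i$ is time-independent (its structure equations $d\zeta^1=-\zeta^{12}+\zeta^{1\overline{2}}$, $d\zeta^2=0$ do not involve $t$), differentiating $\omega=\ii(x\zeta^{1\overline{1}}+y\zeta^{2\overline{2}}+z\zeta^{1\overline{2}}+\overline{z}\zeta^{2\overline{1}})$ gives $\deet\omega=\ii(x'\zeta^{1\overline{1}}+y'\zeta^{2\overline{2}}+z'\zeta^{1\overline{2}}+\overline{z}'\zeta^{2\overline{1}})$. Equating this with $-(\rho^b)^{(1,1)}$ and comparing the coefficients of the linearly independent $(1,1)$-forms yields the system: the diagonal terms $\zeta^{1\overline{1}}$ and $\zeta^{2\overline{2}}$ are absent from $\rho^b$, forcing $x'=0$ and $y'=0$, while the $\zeta^{1\overline{2}}$ coefficient gives $z'=-A=-\frac{xz}{xy-|z|^2}$; the $\zeta^{2\overline{1}}$ equation is the complex conjugate of this and is automatically consistent.

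Finally, for the decay estimate I would pass to $u=|z|^2$. Because $x\equiv x_0$ and $y\equiv y_0$ are constant, the quantity $z'\overline{z}=-x_0|z|^2/(x_0y_0-|z|^2)$ is real and nonpositive, so $u'=2\Re(z'\overline{z})=-\frac{2x_0u}{x_0y_0-u}$. The one point requiring care, and the only genuine obstacle, is a priori control of the denominator: I would observe that $u$ is monotonically decreasing, which keeps $u(t)\le u(0)<x_0y_0$ for all $t$, so positive-definiteness (and hence global existence of the solution) is preserved and the denominator stays bounded above by $x_0y_0$. Consequently $u'\le-\frac{2}{y_0}u$, and Gr\"onwall's inequality gives $u(t)\le u(0)e^{-2t/y_0}$, that is, $|z|=O(e^{-Ct})$ with $C=1/y_0$. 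Everything besides this monotonicity observation is routine bookkeeping in the coframe.
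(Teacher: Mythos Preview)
Your proof is correct and follows essentially the same approach as the paper: extract the ODE system by matching $(1,1)$-coefficients of $-\rho^b$ in the fixed coframe, then compute $(|z|^2)'=-2x_0|z|^2/(x_0y_0-|z|^2)\le -2|z|^2/y_0$ to obtain $|z|\le|z_0|e^{-t/y_0}$. The paper is simply terser (it calls the ODE derivation a ``direct calculation'' and does not spell out the monotonicity step preserving positive-definiteness), but the argument and even the constant $C=1/y_0$ are identical.
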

\begin{cor}
Under pluriclosed flow a homogeneous Hermitian metric $g$ on a hyperelliptic
surface converges exponentially fast in the $C^\infty$ topology to a flat
K\"ahler metric. Under the family of metrics $\frac{g(t)}{t}$, a hyperelliptic
surface converges to a point in the Gromov-Hausdorff sense. 
\end{cor}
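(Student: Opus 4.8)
The plan is to extract the limiting behavior directly from the ODE system of the preceding corollary, identify the limit metric, and verify that it is flat Kähler, after which the Gromov--Hausdorff statement is immediate.

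First, from the corollary the functions $x$ and $y$ are constant along the flow and $z(t)=O(e^{-Ct})$. Since the metric is left invariant, every component of $\omega(t)$ and of its covariant derivatives in the fixed left invariant frame $\{\zeta^i\}$ is an algebraic expression in $x,y,z$ and the (constant) structure constants of the Lie algebra. Differentiating the equation $z'=-xz/(xy-|z|^2)$ repeatedly shows that all time derivatives of $z$ decay exponentially as well, so $\omega(t)$ converges exponentially fast in $C^\infty$ to $\omega_\infty=\ii(x\,\zeta^{1\overline{1}}+y\,\zeta^{2\overline{2}})$. No parabolic estimates are needed here, because homogeneity reduces $C^\infty$ convergence of the full tensor to the decay of the single scalar $z$; on the compact quotient this convergence descends from the left invariant picture.

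Second, I would check that $\omega_\infty$ is Kähler. Using $d\zeta^1=-\zeta^{12}+\zeta^{1\overline{2}}$, $d\zeta^2=0$ and their conjugates, the terms of $d\omega_\infty$ involving $\zeta^2,\zeta^{\overline{2}}$ vanish and the terms coming from $x\,\zeta^{1\overline{1}}$ cancel in pairs, yielding $d\omega_\infty=0$. Third, and this is the main point to get right, I would show that $\omega_\infty$ is flat. The limit is a left invariant Kähler metric on the unimodular solvable Lie group $\tilde E(2)\times\arr$, and such a metric is necessarily flat: this follows either from the structural fact that a unimodular Lie group carrying a left invariant Kähler metric is flat, or by directly computing the Levi-Civita curvature from the structure constants (which, for a Kähler metric, coincides with the Chern curvature) and observing that it vanishes. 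I expect this verification, rather than the asymptotics, to be the step requiring the most care.

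Finally, the Gromov--Hausdorff claim follows formally. Since $g(t)\to g_\infty$ uniformly, the metrics $g(t)$ are uniformly bounded above for large $t$, so $\mathrm{diam}(M,g(t))$ stays bounded; hence $\mathrm{diam}\bigl(M,g(t)/t\bigr)=O(t^{-1/2})\to 0$, and the compact family $(M,g(t)/t)$ collapses to a point in the Gromov--Hausdorff sense.
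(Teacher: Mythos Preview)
Your proposal is correct and follows the same line as the paper, only with considerably more detail. The paper's own proof consists solely of the inequality $(|z|^2)'=-2x_0|z|^2/(x_0y_0-|z|^2)\le -2|z|^2/y_0$, yielding $|z|\le |z_0|e^{-t/y_0}$; the identification of the $z=0$ metric as flat K\"ahler is relegated to a one-line Remark without justification, and the Gromov--Hausdorff collapse is left implicit. Your verification that $d\omega_\infty=0$, your flatness argument (either via the structural fact for left invariant K\"ahler metrics on unimodular groups or by direct curvature computation from the brackets $[X_1,X_4]=2X_2$, $[X_2,X_4]=-2X_1$), and your diameter bound for the rescaled metrics simply make explicit what the paper takes for granted.
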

\emph{Remark}: Notice that $\omega$ is a flat K\"ahler metric if and only if
$z=0$. This is therefore an example of pluriclosed flow taking non-K\"ahler initial data to a K\"ahler metric.
\begin{proof}
A direct calculation from Proposition gives the Bismut-Ricci form and the resulting pluriclosed ODE. From here, we compute
\[(|z|^2)'=-2\frac{x_0|z|^2}{x_0y_0-|z|^2}\leq-2\frac{|z|^2}{y_0},\]
and so
\[|z|\leq|z_0|e^{-\frac{1}{y_0}t}.\]
\end{proof}
\subsection{Hopf Surfaces}
\begin{lemma}
Let $\omega$ be a left invariant Hermitian metric on $\arr\times SU_2$. With respect to a frame satisfying
\[d\zeta^1=(1-\ii \alpha)\zeta^{2\overline{2}}\gap
d\zeta^2=-\zeta^{12}-\zeta^{2\overline{1}}\]
we have
\[\eta_1=\frac{\alpha x^2+\ii(xy-x^2-2|z|^2)}{xy-|z|^2}\]
\[\eta_2=\frac{\alpha x\overline{z}-\ii \overline{z} (x+y)}{xy-|z|^2},\]
and so
\[\rho^b=(1-\ii \alpha)\frac{\alpha x^2+\ii
(xy-x^2-2|z|^2)}{xy-|z|^2}\zeta^{2\overline{2}}+\]
\[\frac{-\alpha x\overline{z}+\ii \overline{z}
(x+y)}{xy-|z|^2}(\zeta^{12}+\zeta^{2\overline{1}})+conjugates.\]
\end{lemma}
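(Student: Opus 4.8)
The plan is to apply Proposition \ref{vez} directly, so that the whole computation reduces to reading off the structure constants of the complexified Lie algebra, inverting the metric, and differentiating the resulting left-invariant one-form $\eta$.

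First I would record the structure constants. The coframe equations $d\zeta^1 = (1-\ii \alpha)\zeta^{2\overline{2}}$ and $d\zeta^2 = -\zeta^{12}-\zeta^{2\overline{1}}$ are dual, via the relation $d\zeta^k(X,Y) = -\zeta^k([X,Y])$ valid for left-invariant forms, to the brackets of case (3) in the previous section, namely $[Z_1,Z_2]=Z_2$, $[Z_1,\overline{Z_2}]=-\overline{Z_2}$, and $[Z_2,\overline{Z_2}]=(\ii \alpha-1)Z_1+(\ii \alpha+1)\overline{Z_1}$, together with their conjugates. From these I read off the structure constants that enter the formula: the holomorphic one $c_{12}^2=1$, and the mixed ones $c_{1\overline{2}}^{\overline{2}}=-1$ and $c_{2\overline{2}}^{\overline{1}}=\ii \alpha+1$.

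Next I would invert the metric. With $g_{1\overline{1}}=x$, $g_{2\overline{2}}=y$, $g_{1\overline{2}}=z$, $g_{2\overline{1}}=\overline{z}$, and $D=xy-|z|^2>0$, the inverse components are $g^{\overline{1}1}=y/D$, $g^{\overline{2}2}=x/D$, and $g^{\overline{2}1}=-\overline{z}/D$. Substituting into $\eta_i=\ii c_{ij}^j-\ii g^{\overline{j}k}c_{k\overline{j}}^{\overline{l}}g_{i\overline{l}}$, the trace term $\ii c_{ij}^j$, summed over the holomorphic indices, contributes $\ii $ when $i=1$ and $0$ when $i=2$, while the contraction term collects only the contributions of $c_{1\overline{2}}^{\overline{2}}$ and $c_{2\overline{2}}^{\overline{1}}$. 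Clearing $D$ and separating real and imaginary parts then yields the stated $\eta_1$ and $\eta_2$.

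Finally, since the metric is left invariant the coefficients $x,y,z$, and hence $\eta_1,\eta_2$, are constant, so $\rho^b=d\eta=\eta_1\,d\zeta^1+\eta_2\,d\zeta^2$ plus its conjugate terms; substituting the two structure equations for $d\zeta^1$ and $d\zeta^2$ produces the asserted form of $\rho^b$. I expect the only real obstacle to be bookkeeping: one must keep the holomorphic and antiholomorphic indices and their conjugate structure constants straight and correctly fix the range of summation in the trace and contraction terms, since any slip in the inverse-metric entries or in the coefficient $c_{2\overline{2}}^{\overline{1}}$ would propagate directly into the final formula.
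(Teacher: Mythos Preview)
Your proposal is correct and follows exactly the approach the paper takes throughout: apply Proposition~\ref{vez} to read off $\eta_i$ from the structure constants and the inverse metric, then use $\rho^b=d\eta$ with the coframe relations. The paper states this lemma as a direct computation without further proof, and your outline (including the identification of the relevant nonzero constants $c_{12}^2$, $c_{1\overline{2}}^{\overline{2}}$, $c_{2\overline{2}}^{\overline{1}}$ and the inverse-metric entries) reproduces the stated formulas verbatim.
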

\begin{cor}
Pluriclosed flow for a left invariant metric on $\arr\times SU_2$ is given by
\[x'=0\]
\[y'=2\frac{x((\alpha^2+1)x-y)+2|z|^2}{xy-|z|^2}\]
\[z'=\frac{\alpha \ii xz-z(x+y)}{xy-|z|^2}\]
in particular, $|z|=O(e^{-\frac{1}{x_0}t})$ and $y\rightarrow(1+\alpha^2)x_0$.
\end{cor}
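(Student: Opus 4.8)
The plan is to obtain the evolution system by extracting the $(1,1)$-part of the Bismut--Ricci form supplied by the previous lemma and matching it against $\frac{d\omega}{dt}$, exactly as in the hyperelliptic case, and then to read off the asymptotics directly from the system. Writing $D=xy-|z|^2>0$, denoting by $A$ the coefficient of $\zeta^{2\overline 2}$ in $\rho^b$ and by $B$ the common coefficient of $\zeta^{12}$ and $\zeta^{2\overline 1}$, I would first separate $\rho^b$ by type: the $\zeta^{12}$ term is of type $(2,0)$ and is discarded, while adding the conjugates and keeping only $(1,1)$-forms yields
\[(\rho^b)^{(1,1)}=(A-\overline A)\zeta^{2\overline 2}+B\zeta^{2\overline 1}-\overline B\zeta^{1\overline 2}.\]
Since $\frac{d\omega}{dt}=\ii(x'\zeta^{1\overline 1}+y'\zeta^{2\overline 2}+z'\zeta^{1\overline 2}+\overline{z}'\zeta^{2\overline 1})=-(\rho^b)^{(1,1)}$, comparing coefficients of $\zeta^{1\overline 1}$, $\zeta^{2\overline 2}$, and $\zeta^{1\overline 2}$ gives $x'=0$, the stated equation for $y'$ (using $\operatorname{Im}A=(xy-(1+\alpha^2)x^2-2|z|^2)/D$), and $z'=-\ii\,\overline B$, which is the stated equation for $z'$. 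This step is routine algebra once the types are sorted out.

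For the decay of $|z|$, since $x'=0$ I set $x\equiv x_0$ and compute
\[(|z|^2)'=\overline z\,z'+z\,\overline{z}'=-\frac{2(x_0+y)|z|^2}{D},\]
the imaginary $\ii\alpha x z$ contributions cancelling. The elementary inequality $x_0(x_0+y)=x_0^2+x_0y\ge x_0y-|z|^2=D$ shows $\frac{x_0+y}{D}\ge\frac1{x_0}$, whence $(|z|^2)'\le-\frac{2}{x_0}|z|^2$ and therefore $|z|\le|z_0|e^{-t/x_0}$. This part closes on $|z|^2$ alone and needs no control of $y$.

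For the convergence $y\to(1+\alpha^2)x_0=:Y_*$, I would first establish a priori bounds: once $|z|$ is small, the numerator of $y'$ is dominated by $x_0Y_*>0$ when $y$ is near $0$, so $y'>0$ there, while $y'<0$ once $y$ is large, so $y$ is eventually trapped in a compact interval $[m,M]$ with $m>0$; together with $|z|\to0$ this keeps $D$ bounded above and bounded away from $0$, i.e.\ away from the degeneration $D=0$. With these bounds the Lyapunov function $V=(y-Y_*)^2$ satisfies
\[V'=\frac{-4x_0V+8(y-Y_*)|z|^2}{D},\]
whose forcing term is $O(e^{-2t/x_0})$, so a Gronwall/comparison argument forces $V\to0$ and hence $y\to(1+\alpha^2)x_0$. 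The main obstacle is precisely this a priori control of $y$ away from $\{D=0\}$: the decay of $|z|$ and the final perturbed-ODE convergence are both clean, but one must know that the solution persists as a genuine Hermitian metric and that $y$ neither collapses to $0$ nor escapes to infinity, which is exactly what the sign analysis of $y'$ near the boundary of the admissible region $\{x,y>0,\ xy-|z|^2>0\}$ provides.
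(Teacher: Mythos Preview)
Your proposal is correct and, for the derivation of the ODE system and the exponential decay of $|z|$, identical to the paper: the paper also computes $(|z|^2)'=-2(x_0+y)|z|^2/D$ and uses $x_0(x_0+y)\ge D$ to obtain $(|z|^2)'\le -2|z|^2/x_0$.

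The only substantive difference is in the argument for $y\to(1+\alpha^2)x_0$. The paper gives a two-line barrier argument: it observes that $y'>0$ whenever $y<(1+\alpha^2)x_0$, and that whenever $y'\ge 0$ one necessarily has $y\le(1+\alpha^2)x_0+2|z|^2/x_0\le(1+\alpha^2)x_0+2|z_0|^2x_0^{-1}e^{-2t/x_0}$, which squeezes $y$ to the limit without any separate a~priori control of $D$. Your route via the Lyapunov function $V=(y-Y_*)^2$ and Gronwall is a valid and more systematic alternative, but it requires first establishing uniform two-sided bounds on $y$ and on $D$; the paper's barrier argument avoids that bookkeeping because the upper barrier $Y_*+2|z|^2/x_0$ already decays to $Y_*$ directly. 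Either approach is adequate here.
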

\begin{cor}
Under pluriclosed flow, a locally homogeneous Hermitian metric on a Hopf surface
converges in the $C^\infty$ topology to a metric which is independent of the initial condition and is unique up to homothety.
\end{cor}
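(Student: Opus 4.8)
The plan is to reduce the $C^\infty$ convergence of the metric to the convergence of the three scalar functions $x(t),y(t),z(t)$ that determine it, and then to read the limit off directly from the ODE of the preceding corollary. A locally homogeneous Hermitian metric on a Hopf surface is the descent of a left invariant metric on $\arr\times SU_2$, and such a metric is expressed with respect to a fixed left invariant coframe $\zeta^i$ by the constants $x=g(Z_1,\overline{Z_1})$, $y=g(Z_2,\overline{Z_2})$, $z=g(Z_1,\overline{Z_2})$. Since pluriclosed flow preserves left invariance, $g(t)$ remains of this form for all $t$, and every covariant derivative of $g(t)$ on the compact quotient is an algebraic expression in the triple $(x(t),y(t),z(t))$ and the fixed structure constants. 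Consequently, for each $k$ the $C^k$ distance between $g(t)$ and any candidate limit is a continuous function of this triple, so $C^\infty$ convergence of the metrics is equivalent to convergence of $(x(t),y(t),z(t))$ as $t\to\infty$.

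I would then invoke the preceding corollary, which gives $x(t)\equiv x_0$, $|z(t)|=O(e^{-t/x_0})$, and $y(t)\to(1+\alpha^2)x_0$. Thus the triple converges to $(x_0,(1+\alpha^2)x_0,0)$, and I would check that this limiting triple satisfies the nondegeneracy conditions $x_\infty,y_\infty>0$ and $x_\infty y_\infty-|z_\infty|^2=(1+\alpha^2)x_0^2>0$, so that the limit is a genuine Hermitian metric
\[\omega_\infty=\ii x_0\bigl(\zeta^{1\overline{1}}+(1+\alpha^2)\zeta^{2\overline{2}}\bigr).\]
By the reduction of the first paragraph, this yields $C^\infty$ convergence $g(t)\to g_\infty$ on the compact quotient.

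It remains to address the uniqueness statement. The limiting triple $(x_0,(1+\alpha^2)x_0,0)$ depends on the initial data $(x_0,y_0,z_0)$ only through $x_0=x(0)$, as the initial values $y_0$ and $z_0$ are forgotten in the limit; and since $x_0$ enters $\omega_\infty$ as a single overall positive factor, all limits arising from different initial metrics (for the fixed complex structure, i.e.\ fixed $\alpha$) are mutually homothetic. This is exactly the assertion that the limit is unique up to homothety and independent of the initial condition.

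The main analytic obstacle is not internal to this corollary but is imported from the preceding one: one must know that the flow exists for all time and that the determinant $x_0y-|z|^2$ stays positive and bounded away from zero, so the denominators in the $y$- and $z$-equations never degenerate and the stated asymptotics hold globally. Since a Hopf surface is neither rational nor ruled, Theorem \ref{soli} forces $T=\infty$; concretely one verifies that $y>0$ and $x_0y-|z|^2>0$ are preserved along the flow (using that $y'>0$ whenever $y$ is small and that $|z|^2$ is decreasing) and that $y^{*}=(1+\alpha^2)x_0$ is an attracting fixed point of the asymptotically autonomous $y$-equation once $z$ has decayed. Granting this, the convergence is in fact exponential, which is more than enough for the $C^\infty$ claim.
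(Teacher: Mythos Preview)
Your argument is correct and follows the same route as the paper: the paper's proof simply establishes the asymptotics $|z|\to 0$ exponentially and $y\to(1+\alpha^2)x_0$ via the ODE, leaving the passage to $C^\infty$ convergence and the homothety statement implicit, whereas you spell out both of these steps carefully. One small caution: your appeal to Theorem~\ref{soli} for long-time existence is circular, since that theorem is proved later using this very corollary; but you immediately supply the direct ODE argument (positivity of $x_0y-|z|^2$ and decay of $|z|^2$), which is exactly what is needed and is what the paper relies on as well.
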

\begin{proof}
We compute
\[(|z|^2)'=-2\frac{|z|^2(x_0+y)}{x_0y-|z|^2}\]
and so
\[(|z|^2)'\leq -2\frac{|z|^2}{x_0}.\]
Thus
\[|z|^2\leq|z_0|^2e^{-\frac{2}{x_0}t}.\]
Note that $y$ is increasing whenever $y < (1+\alpha^2)x_0$ and if $y$ is
nondecreasing then
\[y \leq (\alpha^2+1)x_0+2\frac{|z_0|^2}{x_0}e^{-\frac{2}{x_0}t}.\]
Therefore $y\rightarrow(1+\alpha^2)x_0$.
\end{proof}
\emph{Remark:} A homogeneous Hopf surface is a compact complex surface whose universal cover is $\cee^2\backslash\{0\}$ and which has a finite index subgroup of the fundamental group generated by the map $\theta(z,w)=(az,bw)$, where $a,b\in \cee^*$ and $|a|=|b|<1$. One can identify $\arr\times SU_2$ with $\cee^2\backslash\{0\}$ so that the induced complex structure is left invariant and the map $\theta$ is given by left multiplication by the element $(|a|,id)\in \arr\times SU_2$. The parameter $\alpha$ then encodes information on the angles $\arg a$ and $\arg b$ of $a$ and $b$. Because $|a|=|b|$, the metric \[\omega=\ii \frac{1}{r^2}\partial\bar{\partial}r^2\] descends to the quotient Hopf surface. As shown by Ivanov and Gauduchon \cite{ivga}, up to homothety this can be the only non-K\"ahler fixed point of the flow on compact complex surfaces.
\subsection{Non-K\"ahler, Properly Elliptic Surfaces}
\begin{lemma}
Let $\omega$ be a left invariant Hermitian metric on
$\tilde{SL_2\arr}\times\arr$. With the frame given by
\[d\zeta^1=-\ii (\zeta^{12}+\zeta^{1\overline{2}})\gap
d\zeta^2=(\alpha-\ii)\zeta^{1\overline{1}},\]
we compute
\[\eta_1=\frac{z(y-x)}{xy-|z|^2}+\ii \frac{-\alpha yz}{xy-|z|^2}\]
\[\eta_2=\frac{xy+y^2-2|z|^2}{xy-|z|^2}+\ii \frac{-\alpha y^2}{xy-|z|^2},\]
and therefore
\[\rho^b=(\frac{-\alpha yz+\ii
z(x-y)}{xy-|z|^2})(\zeta^{12}+\zeta^{1\overline{2}})+\]
\[(\alpha-\ii)(\frac{xy+y^2-2|z|^2-\ii\alpha
y^2}{xy-|z|^2})\zeta^{1\overline{1}}+conjugates.\]
\end{lemma}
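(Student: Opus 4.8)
The plan is to apply the formula of Proposition \ref{vez} directly, using the structure constants of the Lie algebra, and then to differentiate the resulting $1$-form $\eta$. First I would read off the complex structure constants from the brackets listed above for $\tilde{SL}_2(\arr)\times\arr$, namely $[Z_1,Z_2]=\ii Z_1$, $[Z_1,\overline{Z_2}]=\ii Z_1$, and $[Z_1,\overline{Z_1}]=(\ii-\alpha)Z_2+(\ii+\alpha)\overline{Z_2}$, together with the brackets obtained from these by complex conjugation, in particular $[Z_2,\overline{Z_1}]=\ii\overline{Z_1}$. As a consistency check I would verify that these brackets dualize to the stated structure equations $d\zeta^1=-\ii(\zeta^{12}+\zeta^{1\overline{2}})$ and $d\zeta^2=(\alpha-\ii)\zeta^{1\overline{1}}$ via $d\zeta^k(X,Y)=-\zeta^k([X,Y])$; this both fixes conventions and guards against sign errors in the conjugate brackets.

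Next I would record the metric in the basis $Z_i$ as the matrix $\begin{pmatrix} x & z \\ \overline{z} & y\end{pmatrix}$ with determinant $D=xy-|z|^2$, and invert it to obtain the coefficients $g^{\overline{j}k}$. With these in hand the two nonzero mixed structure constants entering Proposition \ref{vez} are $c_{1\overline{1}}^{\overline{2}}=\ii+\alpha$ and $c_{2\overline{1}}^{\overline{1}}=\ii$, while the holomorphic trace $c_{ij}^j$ vanishes for both $i=1$ and $i=2$. Substituting into $\eta_i=\ii c_{ij}^j-\ii g^{\overline{j}k}c_{k\overline{j}}^{\overline{l}}g_{i\overline{l}}$ and collecting the two surviving terms of the sum then yields $\eta_1$ and $\eta_2$ as stated.

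The last step is the computation $\rho^b=d\eta$. Here I would use that for a left invariant metric the coefficients $x,y,z$, and hence $\eta_1,\eta_2$, are constant on $M$, so that $d\eta_i=0$ and $d\eta$ reduces to $\eta_1\,d\zeta^1+\eta_2\,d\zeta^2$ plus conjugates. Feeding in the structure equations gives $\rho^b=-\ii\eta_1(\zeta^{12}+\zeta^{1\overline{2}})+(\alpha-\ii)\eta_2\,\zeta^{1\overline{1}}+\text{conjugates}$, and expanding $-\ii\eta_1$ produces the claimed coefficient $\tfrac{-\alpha yz+\ii z(x-y)}{D}$ while $(\alpha-\ii)\eta_2$ already has the claimed form.

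The main obstacle I anticipate is purely bookkeeping: keeping track of which mixed brackets $[Z_k,\overline{Z_j}]$ contribute a barred term $\overline{Z_l}$, and getting the conjugate brackets and the placement of indices in $g^{\overline{j}k}$ correct. A single sign slip in $c_{2\overline{1}}^{\overline{1}}$ or a transposed entry of the inverse metric would corrupt $\eta_1$, so the structure-equation consistency check above is the safeguard I would lean on most.
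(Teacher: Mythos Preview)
Your approach is exactly the one the paper (implicitly) uses: apply Proposition \ref{vez} with the structure constants read off from the brackets, then differentiate $\eta$ using the structure equations. However, there is a concrete slip in your bookkeeping that would corrupt $\eta_2$. You assert that the holomorphic trace $c_{ij}^j$ vanishes for both $i=1$ and $i=2$, but this is false for $i=2$: from $[Z_1,Z_2]=\ii Z_1$ we get $[Z_2,Z_1]=-\ii Z_1$, so $c_{21}^1=-\ii$ and hence $c_{2j}^j=-\ii$. The first term in the formula therefore contributes $\ii c_{2j}^j=1$ to $\eta_2$.

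If you omit this contribution, the second sum alone gives only $\dfrac{y^2-|z|^2}{xy-|z|^2}-\ii\dfrac{\alpha y^2}{xy-|z|^2}$, which does not match the stated $\eta_2$; adding the missing $1=\dfrac{xy-|z|^2}{xy-|z|^2}$ recovers the correct numerator $xy+y^2-2|z|^2$. So your safeguard of checking the structure equations catches the barred brackets, but you also need to check the purely holomorphic ones: $d\zeta^1=-\ii\zeta^{12}+\cdots$ encodes precisely $c_{12}^1=\ii$, whose antisymmetric partner $c_{21}^1$ is the term you dropped. Once this is fixed, the rest of your outline (computing $\rho^b=\eta_1\,d\zeta^1+\eta_2\,d\zeta^2+\text{conjugates}$ and reading off $-\ii\eta_1$ and $(\alpha-\ii)\eta_2$ as coefficients) goes through as written.
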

\begin{cor}
Pluriclosed flow of a left invariant Hermitian metric on
$\tilde{SL_2\arr}\times\arr$ is given by
\[x'=2(1+\frac{(1+\alpha^2)y^2-|z|^2}{xy-|z|^2})\gap y'=0\]
\[z'=\frac{-\ii \alpha yz+z(y-x)}{xy-|z|^2}.\]
In particular, $z=O(e^{-Ct})$ and $x\sim 2t$.
\end{cor}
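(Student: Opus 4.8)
The plan is to treat the two assertions separately: first derive the displayed system from the Bismut--Ricci form of the preceding lemma, and then read off the asymptotics $y\equiv y_0$, $z=O(e^{-Ct})$, $x\sim 2t$ from that system.

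For the system itself, I would write the left-hand side of $\frac{d\omega}{dt}=-(\rho^b)^{(1,1)}$ as $\ii(x'\zeta^{1\overline{1}}+y'\zeta^{2\overline{2}}+z'\zeta^{1\overline{2}}+\overline{z}'\zeta^{2\overline{1}})$ and project the right-hand side onto bidegree $(1,1)$. In the lemma $\rho^b$ is a $\zeta^{12}$-term plus a $\zeta^{1\overline{2}}$-term plus a $\zeta^{1\overline{1}}$-term plus conjugates; the $(1,1)$-projection discards the $\zeta^{12}$-term and its conjugate $\zeta^{\overline{1}\,\overline{2}}$. Using $\zeta^{\overline{1}2}=-\zeta^{2\overline{1}}$, $\zeta^{\overline{1}1}=-\zeta^{1\overline{1}}$ and the fact that $x,y$ are real and $xy-|z|^2>0$ by Hermitian symmetry, only the imaginary part of the $\zeta^{1\overline{1}}$-coefficient survives after adding the conjugate. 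Matching coefficients then yields $y'=0$ (there is no $\zeta^{2\overline{2}}$-term), the stated $z'$ from the $\zeta^{1\overline{2}}$-coefficient, and $x'=-2\,\mathrm{Im}\,R$ with $R=(\alpha-\ii)\frac{(xy+y^2-2|z|^2)-\ii\alpha y^2}{xy-|z|^2}$; expanding the product and using $(xy-|z|^2)+(1+\alpha^2)y^2-|z|^2=xy+(1+\alpha^2)y^2-2|z|^2$ recovers the displayed $x'$. This step is purely computational.

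Writing $y_0$ for the now-constant value of $y$, I would compute $(|z|^2)'=z'\overline{z}+z\overline{z}'=\frac{2(y_0-x)|z|^2}{xy_0-|z|^2}$; the imaginary rotation in $z'$ cancels, so only the sign of $y_0-x$ matters. The heuristic is then clear: once $x$ is large $(|z|^2)'\le-\frac{1}{y_0}|z|^2$ forces exponential decay of $z$, and with $z\to0$ the identity $x'=4-\frac{2(xy_0-(1+\alpha^2)y_0^2)}{xy_0-|z|^2}$ gives $x'\to2$, i.e. $x\sim2t$.

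The genuine obstacle is that these two facts are coupled: unlike the hyperelliptic and Hopf cases, where one coefficient is constant and the decay term has a fixed sign, here $x'$ need not be positive, since the bracket $xy_0+(1+\alpha^2)y_0^2-2|z|^2$ can be negative when $|z|^2$ is near the degeneracy value $xy_0$. To break the coupling I would run an invariant-region argument. Setting $\Phi=2|z|^2-xy_0-(1+\alpha^2)y_0^2$ one has $x'>0\iff\Phi<0$ and $(|z|^2)'<0\iff x>y_0$. I would show the region $R=\{x>y_0,\ \Phi<0\}$ is forward-invariant: on $\{\Phi=0,\ x>y_0\}$ a direct computation gives $\Phi'=\frac{2(y_0-x)(xy_0+(1+\alpha^2)y_0^2)}{xy_0-|z|^2}<0$, while on $\{x=y_0\}$ one checks $x'\ge4>0$. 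I would then show $R$ is entered in finite time from every admissible initial datum: when $x\le y_0$ one has $x'\ge4$, so $x$ reaches $y_0$ (where $\Phi<0$ automatically, since $|z|^2<y_0^2$), and when $x>y_0$ with $\Phi\ge0$ the positivity $|z|^2<xy_0$ forbids reaching $x=y_0$ with $\Phi\ge0$, so $\Phi$ must cross $0$ first. Inside $R$ the coefficient $|z|^2$ is decreasing, hence bounded by its entry value, so $xy_0-|z|^2\to\infty$, $x'\to2$ and $x\to\infty$; finally for $x\ge2y_0$ one has $xy_0-|z|^2\le xy_0$ and $x-y_0\ge x/2$, whence $(|z|^2)'\le-\frac{1}{y_0}|z|^2$ and $z=O(e^{-Ct})$. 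Verifying forward-invariance and finite-time entry is the main work; the rest is bookkeeping.
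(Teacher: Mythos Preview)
Your derivation of the ODE system from the lemma is correct, and your identification of the coupling between $x$ and $|z|$ as the main issue is exactly right. Your invariant-region strategy works, but it is more elaborate than the paper's and has two soft spots.

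The paper's shortcut is a single monotone quantity: one checks directly that the determinant $D=xy_0-|z|^2$ satisfies
\[
D\cdot D' \;=\; 2xy_0^2+2(1+\alpha^2)y_0^3+2|z|^2(x-3y_0),
\]
and, viewing the right side as affine in $|z|^2$ on $[0,xy_0)$, one finds it is strictly positive (its value at $|z|^2=xy_0$ is $2y_0\bigl((x-y_0)^2+\alpha^2y_0^2\bigr)\ge0$). So $D$ is increasing. The paper then uses exactly your observation $x'\ge4$ on $\{x\le y_0\}$ to force $x>y_0$ eventually, after which $|z|^2$ decreases, and finishes as you do. The monotone $D$ plays the role of your region $R$: it gives global control without a case split and automatically rules out degeneration of the metric.

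In your version, the sentence ``the positivity $|z|^2<xy_0$ forbids reaching $x=y_0$ with $\Phi\ge0$, so $\Phi$ must cross $0$ first'' assumes the trajectory leaves $\{x>y_0,\ \Phi\ge0\}$, but you have not excluded the third possibility that it stays there forever (both $x$ and $|z|^2$ are nonincreasing there, so they could simply converge). This is easy to repair by a steady-state check on the closure, but it is a real omission. Likewise, ``$|z|^2$ is decreasing, hence bounded, so $xy_0-|z|^2\to\infty$'' is circular: boundedness of $|z|^2$ alone does not give $x\to\infty$; you need either the same steady-state argument inside $R$ or the paper's monotone $D$ to force $x$ unbounded before you can conclude $x'\to2$. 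Once those two points are patched your argument is complete; the paper simply avoids both by working with $D$.
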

\begin{cor}
If $\omega(t)$ is a locally homogeneous solution to pluriclosed flow on a
non-K\"ahler properly elliptic surface, then under the family of metrics
$\frac{\omega(t)}{t}$ the surface converges to the base curve in the
Gromov-Hausdorff sense.
\end{cor}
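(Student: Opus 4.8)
The plan is to read off the asymptotics of the metric coefficients from the preceding corollary and then recognize the rescaled flow as a collapse of the elliptic fibers onto the base curve. First I would recall that for a left invariant solution on $\tilde{SL_2\arr}\times\arr$ we have $y\equiv y_0$ constant, $x\sim 2t$, and $z=O(e^{-Ct})$. Forming the rescaled metric $\hat\omega(t)=\omega(t)/t$, the three coefficients behave as $x/t\to 2$, $y/t\to 0$, and $z/t\to 0$. Thus $\hat\omega(t)$ stays uniformly nondegenerate along the real two-plane spanned by $Z_1,\overline{Z_1}$ while degenerating along the plane spanned by $Z_2,\overline{Z_2}$, the off-diagonal term $z/t$ being negligible in the limit.

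Next I would identify these two planes with the base and fiber of the elliptic fibration. Writing $X_1=Z_1+\overline{Z_1}$, $X_2=\ii (Z_1-\overline{Z_1})$, $X_3=Z_2+\overline{Z_2}$, and $X_4=\ii (Z_2-\overline{Z_2})$ and using the brackets of Section 2, a direct computation shows that $X_4$ is central (the $\arr$ factor) and that $\{X_1,X_2,W\}$ with $W=X_3+\alpha X_4$ is a copy of $\mathfrak{sl}_2(\arr)$, on which $\mathrm{ad}_W$ acts on $X_1,X_2$ as a rotation. Hence $\exp(\arr W)$ is the maximal compact $SO(2)\subset\tilde{SL_2\arr}$, the real two-plane $\mathrm{span}\{X_3,X_4\}=\mathrm{span}_\arr\{Z_2,\overline{Z_2}\}$ is tangent to the elliptic fibers (which become flat tori after passing to the compact quotient by $\Gamma$), and the projection $\tilde{SL_2\arr}\times\arr\to\tilde{SL_2\arr}\to H^2$ descends to the elliptic fibration $\pi:M\to\Sigma_g$ onto the genus $g\ge 2$ base curve $\Sigma_g=\Gamma'\backslash H^2$. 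The surviving plane $\mathrm{span}\{X_1,X_2\}$ maps isomorphically onto $TH^2$.

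Finally I would package this as a collapsing Riemannian submersion. Since $z/t\to 0$, the planes $\mathrm{span}\{X_1,X_2\}$ and $\mathrm{span}\{X_3,X_4\}$ become asymptotically orthogonal, so $\pi$ is asymptotically a Riemannian submersion for $\hat g(t)$. The fibers of $\pi$ are flat tori whose coefficient $y/t$ and off-diagonal $z/t$ tend to $0$, so their diameters are $O(\sqrt{y/t})\to 0$; meanwhile the horizontal metric, governed by $x/t\to 2$, converges to the left invariant metric $2\ii \zeta^{1\overline{1}}$ on $H^2$, which is $SO(2)$-invariant and therefore descends to a metric of constant negative curvature on $\Sigma_g$. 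Taking horizontal lifts of paths in the base and comparing lengths, the base distances of $(M,\hat g(t))$ converge to those of this limiting metric while fiber distances vanish, giving Gromov-Hausdorff convergence of $(M,\hat g(t))$ to $\Sigma_g$ exactly as in \cite{tw1}. The main obstacle is this final distance comparison: the horizontal distribution is non-integrable, since the bracket $[Z_1,\overline{Z_1}]$ has nonzero fiber component, so horizontal lifts of closed base loops need not close up. One controls this by observing that the resulting holonomy defect is a vertical displacement, whose $\hat g(t)$-length is bounded by the fiber diameter and hence tends to $0$, so that it does not affect the limiting base metric.
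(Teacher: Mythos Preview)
Your proposal is correct and follows essentially the same route as the paper: both identify the elliptic fibers with the leaves of the $Z_2$-distribution, observe that $\hat\omega(t)$ converges to the pullback $2\ii\,\zeta^{1\bar 1}$ of a constant-curvature metric on the base while the fibers collapse, and conclude Gromov--Hausdorff convergence by using the projection $\pi$ together with a (set-theoretic) section as $\epsilon$-approximations. You add more Lie-algebraic detail and explicitly address the non-integrability of the horizontal distribution via the holonomy-defect argument, which the paper leaves implicit; note one small slip, however: since $[X_1,X_2]=X_3-\alpha X_4$, the $\mathfrak{sl}_2$ complement to the center is spanned by $X_1,X_2,X_3-\alpha X_4$, not $X_3+\alpha X_4$.
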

\begin{proof}
We compute that $xy-|z|^2$ is increasing since
\[2xy^2+2(1+\alpha^2)y^3-4|z|^2y+2|z|^2(x-y)> 0\]
whenever $xy-|z|^2>0$.
Then note that $x'\geq 4$ whenever $x\leq y$, so for any $\delta>0$
\[(|z|^2)'\leq|z|^22(\delta-\frac{1}{y_0})\]
for sufficiently large $t$. Thus
\[|z|\leq Ce^{(\delta-\frac{1}{y_0})t}\]
for some constant $C$. $x\sim 2t$ is then immediate.

If $\pi:\Gamma\backslash\tilde{SL_2\arr}\times\arr\rightarrow \Sigma$ is the
projection of a non-K\"ahler properly elliptic surface to the base curve
$\Sigma$, the fibers are the leaves of the distribution spanned by the real and
imaginary parts of $Z_2$. Moreover, there is a unique metric $\omega_\Sigma$ on
$\Sigma$ such that $\pi^*\omega_\Sigma=2\ii \zeta^{1\bar 1}$. Now, if
$f:\Sigma\rightarrow\Gamma\backslash\tilde{SL_2\arr}\times\arr$ is any function
(not necessarily continuous) such that $\pi\circ f=id$ then, for any
$\epsilon>0$, $\pi$ and $f$ are $\epsilon$-Gromov-Hausdorff approximations with
respect to the metrics $\frac{\omega(t)}{t}$ and $\omega_\Sigma$ as long as $t$
is sufficiently large.
\end{proof}
\subsection{Kodaira Surfaces}
\begin{lemma}
Let $\omega$ be a left invariant metric on $Nil^3\times\arr$. With a frame satisfying
\[d\zeta^1=0\gap d\zeta^2=-\ii\zeta^{1\overline{1}},\]
we compute
\[\eta_2=\frac{y^2}{xy-|z|^2},\]
and so
\[\rho^b=-\ii\frac{y^2}{xy-|z|^2}\zeta^{1\overline{1}}+conjugate.\]
\end{lemma}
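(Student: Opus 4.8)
The plan is to apply Proposition \ref{vez} directly and reduce everything to a short computation with the structure constants. First I would recover the Lie algebra from the given coframe identities. For left invariant forms and fields one has $d\zeta^k(X,Y)=-\zeta^k([X,Y])$, so the equations $d\zeta^1=0$ and $d\zeta^2=-\ii\zeta^{1\overline{1}}$ are equivalent to the single nonvanishing bracket $[Z_1,\overline{Z_1}]=\ii(Z_2+\overline{Z_2})$, which is exactly the Kodaira entry in the list of Section 2. Consequently the only structure constants that do not vanish are $c_{1\overline{1}}^{2}=c_{1\overline{1}}^{\overline{2}}=\ii$; in particular every bracket among the holomorphic fields $Z_1,Z_2$ is zero.

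Next I would substitute these into $\eta_i=\ii c_{ij}^j-\ii g^{\overline{j}k}c_{k\overline{j}}^{\overline{l}}g_{i\overline{l}}$. The first term $\ii c_{ij}^j$ vanishes identically because all holomorphic brackets are zero. In the second term only the \emph{barred} upper index contributes, so the sole surviving summand comes from $k=j=1$, $\overline{l}=\overline{2}$, giving $\eta_i=-\ii\, g^{\overline{1}1}c_{1\overline{1}}^{\overline{2}}g_{i\overline{2}}=-\ii(\ii)g^{\overline{1}1}g_{i\overline{2}}=g^{\overline{1}1}g_{i\overline{2}}$. Inverting the Hermitian matrix $(g_{i\overline{j}})$ with entries $g_{1\overline{1}}=x$, $g_{2\overline{2}}=y$, $g_{1\overline{2}}=z$, $g_{2\overline{1}}=\overline{z}$, one reads off $g^{\overline{1}1}=y/(xy-|z|^2)$, and since $g_{2\overline{2}}=y$ this yields $\eta_2=y^2/(xy-|z|^2)$, as claimed. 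The remaining component $\eta_1=yz/(xy-|z|^2)$ need not be recorded: because $d\zeta^1=0$, both $\eta_1\zeta^1$ and $\overline{\eta_1}\zeta^{\overline{1}}$ are closed and do not contribute to $\rho^b$.

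Finally I would assemble $\rho^b=d\eta$ with $\eta=\eta_i\zeta^i+\overline{\eta_i}\zeta^{\overline{i}}$. Only the $\eta_2\zeta^2$ and $\overline{\eta_2}\zeta^{\overline{2}}$ terms survive differentiation, and since $d\zeta^2=-\ii\zeta^{1\overline{1}}$ (with $d\zeta^{\overline{2}}$ its conjugate) this produces $\rho^b=-\ii\eta_2\zeta^{1\overline{1}}+\text{conjugate}$, the stated expression. As a consistency check, $\eta_2$ is real and $\overline{\zeta^{1\overline{1}}}=-\zeta^{1\overline{1}}$, so the conjugate term coincides with the written one and the two $\zeta^2,\zeta^{\overline{2}}$ contributions combine to $-2\ii\eta_2\zeta^{1\overline{1}}$.

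There is no genuine obstacle here; the statement is a direct application of Proposition \ref{vez}. The only points demanding care are bookkeeping: correctly translating the coframe identities into the structure constants, keeping only the barred upper index $c_{k\overline{j}}^{\overline{l}}$ in the trace term, and using Hermitian symmetry $g_{2\overline{1}}=\overline{z}$ when inverting the metric. Recognizing that $\eta_1$ drops out of $\rho^b$ because $d\zeta^1=0$ is what justifies recording $\eta_2$ alone.
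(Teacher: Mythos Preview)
Your proposal is correct and is exactly the approach the paper intends: the lemma is stated without proof precisely because it is a direct application of Proposition~\ref{vez}, and your computation of the structure constants, of $\eta_2$ via the inverse metric, and of $\rho^b=d\eta$ using $d\zeta^1=0$ matches that route step for step. Your observation that $\eta_1$ drops out because $d\zeta^1=0$ explains why the paper records only $\eta_2$.
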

\begin{cor}
Pluriclosed flow of a left invariant metric on $Nil^3\times\arr$ is given by
\[x'=2\frac{y^2}{xy-|z|^2}\]
\[y'=z'=0.\]
In particular
\[\frac{1}{2}x^2y_0-x|z_0|^2=2y_0^2t+\frac{1}{2}x_0^2y_0-x_0|z_0|^2\]
for all $t$.
\end{cor}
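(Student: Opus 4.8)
The final statement is a Corollary asserting that pluriclosed flow on $Nil^3\times\arr$ (Kodaira surfaces) reduces to the system $x'=2y^2/(xy-|z|^2)$, $y'=z'=0$, together with the conserved-quantity identity $\tfrac{1}{2}x^2y_0-x|z_0|^2=2y_0^2t+\tfrac{1}{2}x_0^2y_0-x_0|z_0|^2$. The plan is to first derive the ODE system from the Bismut-Ricci form stated in the preceding Lemma, and then integrate that system to obtain the closed-form identity.

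First I would read off the flow equations from the formula $\rho^b=-\ii\,\tfrac{y^2}{xy-|z|^2}\,\zeta^{1\overline{1}}+\text{conjugate}$. Since pluriclosed flow is $d\omega/dt=-(\rho^b_\omega)^{(1,1)}$ and the metric is $\omega=\ii(x\zeta^{1\overline{1}}+y\zeta^{2\overline{2}}+z\zeta^{1\overline{2}}+\overline{z}\zeta^{2\overline{1}})$, I would match coefficients of the basis two-forms $\zeta^{1\overline{1}},\zeta^{2\overline{2}},\zeta^{1\overline{2}}$. Because $\rho^b$ has only a $\zeta^{1\overline{1}}$ component (plus conjugate), matching gives $\ii x' = \ii\,\tfrac{2y^2}{xy-|z|^2}$ and $y'=z'=0$, i.e. exactly the claimed system. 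This step is a routine coefficient comparison and mirrors the analogous corollaries for the other geometries earlier in the paper; I do not expect difficulty here, provided I keep track of the factor of $2$ coming from the $(1,1)$-projection and the sign convention.

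Next, since $y\equiv y_0$ and $z\equiv z_0$ are constant along the flow, the single remaining ODE becomes $x' = 2y_0^2/(xy_0-|z_0|^2)$, which separates as $(xy_0-|z_0|^2)\,dx = 2y_0^2\,dt$. Integrating the left side gives $\tfrac{1}{2}y_0 x^2 - |z_0|^2 x$, and integrating the right gives $2y_0^2 t$; evaluating the constant of integration at $t=0$ reproduces precisely the stated identity $\tfrac{1}{2}x^2y_0 - x|z_0|^2 = 2y_0^2 t + \tfrac{1}{2}x_0^2 y_0 - x_0|z_0|^2$. From this one reads off that $x$ grows like $\sqrt{t}$, hence $x\sim 2\sqrt{t}$ up to the relevant constant.

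The only subtlety — and the part I would be most careful about — is verifying that the solution exists for all $t\in[0,\infty)$, i.e. that the positivity constraints $x,y>0$ and $xy-|z|^2>0$ are preserved. Since $y,z$ are frozen at their initial values and $x$ is strictly increasing (the right-hand side is positive whenever $xy_0-|z_0|^2>0$), the determinant $xy_0-|z_0|^2$ only grows, so no finite-time singularity or degeneration occurs and the flow is immortal. This is the key structural point guaranteeing $T=\infty$ for Kodaira surfaces, consistent with the hypothesis of Theorem~\ref{grohau}. I expect the main obstacle to be purely bookkeeping rather than conceptual: ensuring the factor conventions in the $(1,1)$-projection are consistent with the normalization used in the earlier geometries so that the conserved quantity comes out with the stated coefficients.
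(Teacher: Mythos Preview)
Your proposal is correct and is precisely the direct computation the paper has in mind: read off the ODE system from the $(1,1)$-part of the Bismut-Ricci form computed in the preceding Lemma, then separate and integrate the single nontrivial equation $(xy_0-|z_0|^2)\,dx=2y_0^2\,dt$. The paper's own proof is simply ``This is a direct computation,'' so you have faithfully unpacked it; your additional remarks on positivity and immortality are correct and useful, though not strictly part of the stated Corollary.
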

\begin{cor}
Let $g(t)$ be a homogeneous solution of pluriclosed flow on a primary Kodaira
surface. Then under the metrics $\frac{g(t)}{t}$ the surface converges to a
point in the Gromov-Hausdorff sense.
\end{cor}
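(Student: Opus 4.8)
The plan is to read the long-time asymptotics of the metric coefficients off the ODE and conservation law of the preceding corollary, and then to show that after rescaling by $t^{-1}$ \emph{every} direction collapses, so that the diameter of $(M,\hat g(t))$ tends to zero, where $\hat g(t)=g(t)/t$. Recall that $y\equiv y_0$ and $z\equiv z_0$ are constant along the flow, while the conservation law
\[
\frac{1}{2}x^2y_0-x|z_0|^2=2y_0^2t+\frac{1}{2}x_0^2y_0-x_0|z_0|^2
\]
forces $x\to\infty$ as $t\to\infty$, since its right-hand side is unbounded and $x'>0$. First I would extract the precise rate: the quadratic term dominates, so $\frac{1}{2}x^2y_0\sim 2y_0^2t$ and hence $x\sim 2\sqrt{y_0}\,\sqrt{t}$. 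The crucial qualitative point is that $x$ grows only like $\sqrt t$, that is, \emph{sublinearly}; this is exactly what distinguishes the Kodaira case from the properly elliptic case, where $x\sim 2t$ grows linearly and one direction survives the rescaling to produce a curve.

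Next I would translate this into the behavior of $\hat g(t)$. In the fixed left-invariant frame the rescaled form is
\[
\frac{\omega(t)}{t}=\ii\left(\frac{x}{t}\zeta^{1\overline{1}}+\frac{y_0}{t}\zeta^{2\overline{2}}+\frac{z_0}{t}\zeta^{1\overline{2}}+\frac{\overline{z_0}}{t}\zeta^{2\overline{1}}\right),
\]
and every coefficient tends to $0$: indeed $x/t\sim 2\sqrt{y_0}\,t^{-1/2}\to0$, while $y_0/t$ and $z_0/t$ clearly vanish. Passing to the fixed real left-invariant frame $X_1,\dots,X_4$, the components $\hat g(t)(X_i,X_j)$ are fixed real-linear combinations of $x/t,\,y_0/t,\,\Re(z_0)/t,\,\Im(z_0)/t$, all of which go to zero, so the symmetric matrix representing $\hat g(t)$ in this frame converges to the zero matrix. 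In particular its operator norm $\lambda(t):=\sup_{0\neq V\in\la}|V|_{\hat g(t)}^2/|V|_e^2$, with $|\cdot|_e$ a fixed left-invariant Euclidean reference on $\la$, satisfies $\lambda(t)\to0$, and by left-invariance $|V|_{\hat g(t)}\le\sqrt{\lambda(t)}\,|V|_e$ holds at every point.

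Finally I would deduce Gromov--Hausdorff collapse. Since $M=\Gamma\backslash(Nil^3\times\arr)$ is the quotient by a cocompact lattice $\Gamma$, there is a fixed compact set $K\subset Nil^3\times\arr$ with $\Gamma\cdot K=Nil^3\times\arr$, and any two points of $M$ admit lifts in $K$, so $\mathrm{diam}(M,\hat g(t))\le\mathrm{diam}_{\hat g(t)}(K)$. The pointwise norm comparison makes every curve's $\hat g(t)$-length at most $\sqrt{\lambda(t)}$ times its fixed Euclidean length, whence
\[
\mathrm{diam}(M,\hat{g}(t))\le \sqrt{\lambda(t)}\,\mathrm{diam}_e(K)\longrightarrow 0.
\]
A sequence of compact metric spaces whose diameters tend to zero converges in the Gromov--Hausdorff sense to a point, which is the claim. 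The only genuine subtlety, and the step I expect to require the most care, is the passage from decay of the metric coefficients to decay of the diameter: this is exactly where cocompactness of $\Gamma$ enters, guaranteeing that the diameter is controlled by the size of the metric on a fixed bounded piece of the universal cover rather than growing with $t$. The estimate $\lambda(t)\to0$ is otherwise a routine consequence of all four coefficients being $o(t)$.
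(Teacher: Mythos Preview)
Your proof is correct and is precisely the direct computation the paper leaves to the reader: extract $x\sim 2\sqrt{y_0 t}$ from the conservation law, observe that all coefficients of $\omega(t)/t$ in the fixed left-invariant frame are $o(1)$, and convert this into diameter collapse via cocompactness of the lattice. The paper's own proof consists of the single sentence ``This is a direct computation,'' so you have simply filled in the intended details.
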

\begin{proof}
This is a direct computation.
\end{proof}
\begin{lemma}
Let $\omega$ be a left invariant Hermitian metric on $Nil\rtimes\arr$. With a frame satisfying
\[d\zeta^1=\epsilon(-\zeta^{12}+\zeta^{1\overline{2}})\gap
d\zeta^2=\epsilon\ii\zeta^{1\overline{1}},\]
we compute
\[\eta_1=\epsilon(-\frac{y z}{xy-|z|^2}+\ii \frac{xz}{xy-|z|^2}),\]
\[\eta_2=\epsilon(-\frac{y^2}{xy-|z|^2}+\ii \frac{2|z|^2-xy}{xy-|z|^2}),\]
and therefore
\[\rho^b=\frac{-yz+\ii xz}{xy-|z|^2}(-\zeta^{12}+\zeta^{1\overline{2}})+\]
\[\frac{xy-2|z|^2-\ii y^2}{xy-|z|^2}\zeta^{1\overline{1}}+conjugates.\]
\end{lemma}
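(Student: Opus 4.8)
The plan is to apply Proposition \ref{vez}, which reduces computing $\rho^b$ to finding the components $\eta_i$ of the potential $1$-form $\eta$ from the structure constants of the Lie algebra together with the components of the metric. First I would read off all the relevant structure constants from the bracket relations for $Nil\rtimes\arr$ listed in item (6), namely $[Z_1,Z_2]=\epsilon Z_1$, $[Z_1,\overline{Z_2}]=-\epsilon Z_1$, and $[Z_1,\overline{Z_1}]=-\epsilon\ii(Z_2+\overline{Z_2})$; these are dual to the given structure equations $d\zeta^1=\epsilon(-\zeta^{12}+\zeta^{1\overline{2}})$, $d\zeta^2=\epsilon\ii\zeta^{1\overline{1}}$, and a quick check via $d\zeta^k(X,Y)=-\zeta^k([X,Y])$ confirms consistency. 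For the trace term I need $c_{12}^2=0$ and $c_{21}^1=-\epsilon$, and for the contraction term the only surviving constants with a barred upper index are $c_{1\overline{1}}^{\overline{2}}=-\epsilon\ii$ (read directly from the third bracket) and $c_{2\overline{1}}^{\overline{1}}=\epsilon$. This last one I would obtain by conjugating $[Z_1,\overline{Z_2}]=-\epsilon Z_1$ to get $[\overline{Z_1},Z_2]=-\epsilon\overline{Z_1}$, hence $[Z_2,\overline{Z_1}]=\epsilon\overline{Z_1}$.

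Next I would record the inverse metric. Writing $g_{1\overline{1}}=x$, $g_{2\overline{2}}=y$, $g_{1\overline{2}}=z$, $g_{2\overline{1}}=\overline{z}$ with determinant $xy-|z|^2$, the inverse components I need are $g^{\overline{1}1}=y/(xy-|z|^2)$ and $g^{\overline{1}2}=-z/(xy-|z|^2)$. I would then substitute into the formula $\eta_i=\ii c_{ij}^j-\ii g^{\overline{j}k}c_{k\overline{j}}^{\overline{l}}g_{i\overline{l}}$. The trace term $\ii c_{ij}^j$ vanishes for $i=1$ and equals $-\epsilon\ii$ for $i=2$. For the contraction term it is cleanest to first assemble $T^{\overline{l}}:=g^{\overline{j}k}c_{k\overline{j}}^{\overline{l}}$, which gives $T^{\overline{1}}=-\epsilon z/(xy-|z|^2)$ and $T^{\overline{2}}=-\epsilon\ii y/(xy-|z|^2)$, and then contract against $g_{i\overline{l}}$. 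For $i=1$ this yields $\eta_1=\epsilon(-yz+\ii xz)/(xy-|z|^2)$ with no contribution from the (vanishing) trace term, matching the claim.

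The one place demanding care is $\eta_2$: the contraction term produces $\epsilon(-y^2+\ii|z|^2)/(xy-|z|^2)$, and this must be combined with the constant $-\epsilon\ii$ coming from the trace. Rewriting $-\epsilon\ii=-\epsilon\ii(xy-|z|^2)/(xy-|z|^2)$ and adding, the two separate $\ii|z|^2$ contributions reinforce rather than cancel, giving the imaginary numerator $2|z|^2-xy$ and so exactly the stated $\eta_2$. Finally, since the metric and structure constants are constant, $\rho^b=d\eta=\eta_1\,d\zeta^1+\eta_2\,d\zeta^2$ plus conjugate terms; substituting the two structure equations and using $\epsilon^2=1$ then delivers the claimed Bismut-Ricci form directly. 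I expect the principal obstacle to be sign and conjugation bookkeeping in the structure constants—specifically deriving $c_{2\overline{1}}^{\overline{1}}$ by conjugation and correctly tracking which of the many contractions are nonzero—rather than any conceptual difficulty, the computation being routine once the constants are pinned down.
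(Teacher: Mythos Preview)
Your proposal is correct and is exactly the approach the paper intends: this lemma is a direct application of Proposition~\ref{vez}, and your identification of the nonzero structure constants $c_{1\bar 1}^{\bar 2}=-\epsilon\ii$ and $c_{2\bar 1}^{\bar 1}=\epsilon$, the inverse metric entries, and the combination of the trace term $-\epsilon\ii$ with the contraction term to produce the $2|z|^2-xy$ numerator are all carried out accurately. The final step $\rho^b=\eta_1\,d\zeta^1+\eta_2\,d\zeta^2+\text{conjugates}$ with $\epsilon^2=1$ is likewise correct, so there is nothing to add.
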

\begin{cor}
Pluriclosed flow of a left invariant metric on $Nil\rtimes\arr$ is given by
\[x'=2\frac{y^2}{xy-|z|^2}\gap y'=0\]
\[z'=-\frac{(x+\ii y)z}{xy-|z|^2}.\]
In particular $x\sim 2\sqrt{y_0 t}$ and $|z|=O(e^{-Ct})$.
\end{cor}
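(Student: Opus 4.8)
The plan is to read the flow equations straight off the preceding lemma and then extract the asymptotics by elementary ODE comparison. First I would write the evolving metric as
\[\frac{d\omega}{dt}=\ii(x'\zeta^{1\overline{1}}+y'\zeta^{2\overline{2}}+z'\zeta^{1\overline{2}}+\overline{z}'\zeta^{2\overline{1}})\]
and set it equal to $-(\rho^b)^{(1,1)}$ using the expression for $\rho^b$ just computed. The terms $\zeta^{12}$ and the conjugate terms $\zeta^{\overline{1}\overline{2}}$ are of type $(2,0)$ and $(0,2)$ and so drop out of the $(1,1)$-projection, leaving only $\zeta^{1\overline{1}}$, $\zeta^{1\overline{2}}$, and $\zeta^{2\overline{1}}$. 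Matching coefficients then yields the system: the absence of a $\zeta^{2\overline{2}}$ term forces $y'=0$; combining the $\zeta^{1\overline{1}}$ coefficient with its conjugate (which cancels the real part and doubles the imaginary part) gives $x'=2y^2/(xy-|z|^2)$; and the $\zeta^{1\overline{2}}$ coefficient, after rewriting $-yz+\ii xz=\ii z(x+\ii y)$, gives $z'=-(x+\ii y)z/(xy-|z|^2)$.

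For the decay of $|z|$, I would differentiate $|z|^2=z\overline{z}$ along the flow. Since $x$ and $y$ are real the imaginary contributions cancel and
\[(|z|^2)'=-\frac{2x|z|^2}{xy-|z|^2}.\]
Because $y\equiv y_0$ and $|z|^2>0$ we have $xy-|z|^2<xy_0$, so the coefficient obeys $2x/(xy-|z|^2)>2/y_0$ and hence $(|z|^2)'<-\tfrac{2}{y_0}|z|^2$. Gronwall's inequality then gives $|z|^2\le|z_0|^2e^{-2t/y_0}$, which is the claimed $|z|=O(e^{-Ct})$ with $C=1/y_0$.

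For the growth of $x$, note $x'>0$, so $x$ is increasing, and it cannot stay bounded: since $x$ is increasing and $|z|^2$ is decreasing the denominator $xy_0-|z|^2$ stays bounded, forcing $x'$ below by a positive constant and hence $x\to\infty$. I would then write
\[xx'=\frac{2y_0^2x}{xy_0-|z|^2}=2y_0+\frac{2y_0|z|^2}{xy_0-|z|^2},\]
and observe that, because $x$ is bounded below and $\int_0^\infty|z|^2\,dt<\infty$ by the exponential bound above, the error term is integrable in time. Hence $\tfrac12 x^2-2y_0t$ converges to a finite constant, so $x^2=4y_0t+O(1)$ and therefore $x\sim2\sqrt{y_0t}$.

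The coefficient matching is routine; the only place demanding care is the growth statement, where the point is to confirm $x\to\infty$ and then to control the error term $2y_0|z|^2/(xy_0-|z|^2)$ so that its time integral converges. The exponential decay of $|z|$ established in the previous step is exactly what makes this integral finite, which is what pins down the constant $2\sqrt{y_0}$ rather than merely the order of growth.
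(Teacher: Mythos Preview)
Your proposal is correct and follows essentially the same approach as the paper: compute $(|z|^2)'$ and bound it above by $-C|z|^2$ to obtain exponential decay, then use this to pin down the growth of $x$ via elementary ODE comparison. The only cosmetic difference is that the paper bounds $x'$ directly by $\tfrac{2y_0}{x}\le x'\le\tfrac{2y_0^2}{xy_0-|z_0|^2}$ and integrates, whereas you rewrite $xx'=2y_0+\text{(integrable error)}$; your version yields the slightly sharper $x^2=4y_0t+O(1)$, but both give $x\sim 2\sqrt{y_0 t}$.
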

\begin{cor}
Let $g(t)$ be a homogeneous solution of pluriclosed flow on a secondary Kodaira
surface. Then under the metrics $\frac{g(t)}{t}$ the surface converges to a
point in the Gromov-Hausdorff sense.
\end{cor}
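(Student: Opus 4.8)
The plan is to reduce Gromov-Hausdorff convergence to a point to the single statement that $\mathrm{diam}(M,\hat g(t))\to 0$ as $t\to\infty$, where $\hat g(t)=g(t)/t$: since $M$ is compact, vanishing diameter is precisely Gromov-Hausdorff convergence to a point. Everything needed is already contained in the asymptotics of the preceding corollary, namely $y\equiv y_0$, $x\sim 2\sqrt{y_0 t}$, and $|z|=O(e^{-Ct})$. The point is simply that, after dividing by $t$, every coefficient of the metric tends to $0$: indeed $x/t\sim 2\sqrt{y_0}/\sqrt t\to 0$, $y/t=y_0/t\to 0$, and $|z|/t\to 0$ exponentially, so $\hat g(t)\to 0$ as a tensor and the quotient collapses.

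To make this precise I would fix a background. The left invariant coframe $\zeta^1,\zeta^2$, and hence the left invariant metric $g_0$ determined by $x=y=1$, $z=0$, descend to the compact quotient $M=\Gamma\backslash(Nil^3\rtimes\arr)$, on which $g_0$ has finite diameter $D_0<\infty$. With respect to the frame $Z_1,Z_2$ the Hermitian metric $g(t)$ is represented by the positive definite Hermitian matrix $H(t)=\bigl(\begin{smallmatrix} x & z\\ \overline z & y\end{smallmatrix}\bigr)$, normalized so that $H_0=\mathrm{Id}$ corresponds to $g_0$; thus $\hat g(t)$ is represented by $H(t)/t$.

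The key estimate is then a uniform operator bound $\hat g(t)\le \epsilon(t)\,g_0$ with $\epsilon(t)\to 0$. Since $H(t)$ is positive definite Hermitian, its largest eigenvalue is bounded by its trace $x+y$, so the largest eigenvalue of $H(t)/t$ is at most $(x+y)/t$; setting $\epsilon(t)=(x+y)/t$ the asymptotics give $\epsilon(t)\sim 2\sqrt{y_0}/\sqrt t\to 0$. As the inequality of Hermitian forms $H(t)/t\le\epsilon(t)H_0$ transfers directly to the associated Riemannian metrics, we obtain $|v|_{\hat g(t)}^2\le \epsilon(t)\,|v|_{g_0}^2$ for every tangent vector $v$, so every curve is shortened by at least the factor $\sqrt{\epsilon(t)}$ and hence $\mathrm{diam}(M,\hat g(t))\le \sqrt{\epsilon(t)}\,D_0\to 0$. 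The argument is essentially routine once the asymptotics are in hand, and I do not expect a genuine obstacle; the only point requiring minor care is the passage from the frame-wise coefficient decay to an honest uniform bound on the metric tensor over the compact quotient, which is exactly what the trace/eigenvalue estimate and the finiteness of $D_0$ provide, closing the argument just as in the primary Kodaira case.
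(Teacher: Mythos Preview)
Your proposal is correct and follows the same route as the paper: once the asymptotics $x\sim 2\sqrt{y_0 t}$, $y\equiv y_0$, $|z|=O(e^{-Ct})$ are in hand, every coefficient of $\hat g(t)=g(t)/t$ tends to zero, so the diameter of the compact quotient collapses. The paper's own proof actually spends its lines deriving those asymptotics (the bounds on $(|z|^2)'$ and on $x'$) and leaves the Gromov--Hausdorff step implicit, whereas you take the asymptotics as input and spell out the diameter estimate via the trace bound on $H(t)/t$; substantively the arguments coincide.
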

\begin{proof}
Similar to the previous cases, we compute
\[(|z|^2)'\leq-\frac{|z|^2}{y_0},\]
and so $|z|=O(e^{-Ct})$. Then note that
\[\frac{2y_0}{x}\leq x' \leq \frac{2y_0^2}{xy_0-|z_0|^2}.\]
\end{proof}
\subsection{Inoue Surfaces}
\begin{lemma}
Let $\omega$ be a left invariant Hermitian metric on the solvable Lie group with frame satisfying
\[d\zeta^1=-\lambda\zeta^{12}+\lambda\zeta^{1\overline{2}}\gap d\zeta^2=-2a\ii
\zeta^{2\overline{2}}.\]
Then we compute
\[\eta_1=\frac{2azx+\ii \bar{\lambda}zx}{xy-|z|^2},\]
\[\eta_2=\frac{\ii (\lambda+\bar{\lambda})|z|^2+(2a-\ii \lambda)xy}{xy-|z|^2}.\]
Therefore the Bismut-Ricci form is
\[\rho^b=\frac{2azx+\ii
\bar{\lambda}zx}{xy-|z|^2}(-\lambda\zeta^{12}+\lambda\zeta^{1\overline{2}})+\]
\[(\frac{2a(\lambda+\bar{\lambda})|z|^2+(-4a^2\ii
-2a\lambda)xy}{xy-|z|^2})(\zeta^{2\overline{2}})+conjugates.\]
\end{lemma}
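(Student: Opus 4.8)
The plan is to apply Proposition~\ref{vez} directly; the only real work is extracting the structure constants and contracting them against the inverse metric. First I would recover the brackets of $\la^\cee$ from the frame equations by way of the Maurer--Cartan relation $d\zeta^k = -\frac{1}{2}c_{ij}^k\,\zeta^i\wedge\zeta^j$, where the sum runs over all holomorphic and antiholomorphic indices. From $d\zeta^1=-\lambda\zeta^{12}+\lambda\zeta^{1\overline{2}}$ and $d\zeta^2=-2a\ii\,\zeta^{2\overline{2}}$ this returns $[Z_1,Z_2]=\lambda Z_1$, $[Z_1,\overline{Z_2}]=-\lambda Z_1$, and $[Z_2,\overline{Z_2}]=2a\ii(Z_2+\overline{Z_2})$, matching the Inoue entry of the classification list. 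Conjugating, the only mixed constants I will actually need are $c_{2\overline{1}}^{\overline{1}}=\overline{\lambda}$ (from $[Z_2,\overline{Z_1}]=\overline{\lambda}\,\overline{Z_1}$) and $c_{2\overline{2}}^{\overline{2}}=2a\ii$, while on the holomorphic side $c_{1j}^{j}=0$ and $c_{2j}^{j}=c_{21}^{1}=-\lambda$.

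Next I would invert the metric. Writing the Hermitian matrix $(g_{i\overline{j}})$ as having diagonal entries $x,y$ and off-diagonal entry $z=g_{1\overline{2}}$, with determinant $D=xy-|z|^2$, the inverse has entries $g^{\overline{1}1}=y/D$, $g^{\overline{2}2}=x/D$, and $g^{\overline{1}2}=-z/D$. Substituting into $\eta_i=\ii\,c_{ij}^{j}-\ii\,g^{\overline{j}k}c_{k\overline{j}}^{\overline{l}}g_{i\overline{l}}$ and retaining only the two surviving mixed constants, the contraction for $\eta_1$ collapses to $g^{\overline{1}2}\overline{\lambda}\,g_{1\overline{1}}+g^{\overline{2}2}(2a\ii)g_{1\overline{2}}$; multiplying by $-\ii$ gives $\eta_1=(2azx+\ii\overline{\lambda}zx)/D$. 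The identical collapse for $\eta_2$ produces $(\ii\overline{\lambda}|z|^2+2a\,xy)/D$, and adding the constant $\ii\,c_{2j}^{j}=-\ii\lambda$ over the common denominator $D$ gives the stated $\eta_2$.

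Finally, since the metric is left invariant the $\eta_i$ are constants, so $\rho^b=d\eta=\eta_1\,d\zeta^1+\eta_2\,d\zeta^2+\text{conjugates}$. Substituting the frame equations, $\eta_1\,d\zeta^1$ reproduces the $(-\lambda\zeta^{12}+\lambda\zeta^{1\overline{2}})$ contribution verbatim, while $\eta_2\,d\zeta^2=-2a\ii\,\eta_2\,\zeta^{2\overline{2}}$ expands to the stated $\zeta^{2\overline{2}}$ coefficient, the numerator $2a(\lambda+\overline{\lambda})|z|^2+(-4a^2\ii-2a\lambda)xy$ arising exactly from $-2a\ii$ times the two pieces of $\eta_2$.

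I expect the only place to stumble is the bookkeeping in the middle step: getting the conjugate bracket $[Z_2,\overline{Z_1}]=\overline{\lambda}\,\overline{Z_1}$ with the correct sign, placing the bars correctly in $g^{\overline{j}k}$, and shepherding the factors of $\ii$ through the contraction so that $-\ii\cdot\ii=1$ and $-\ii\cdot 2a\ii=2a$ land in the right places. Once the two nonzero mixed structure constants are isolated everything else is routine algebra over the common denominator $D$.
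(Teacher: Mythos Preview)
Your proposal is correct and is exactly the approach the paper intends: the lemma is stated as a direct computation from Proposition~\ref{vez}, and you have carried out precisely that computation, with the correct identification of the nonzero structure constants, the inverse metric entries, and the final step $\rho^b=\eta_1\,d\zeta^1+\eta_2\,d\zeta^2+\text{conjugates}$.
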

\begin{cor}
Pluriclosed flow of a left invariant metric on the solvable family is given by
\[x'=0,\]
\[y'=12a^2(1+\frac{|z|^2}{xy-|z|^2}),\]
\[z'=-(3a^2+b^2+2ab\ii)\frac{xz}{xy-|z|^2}.\]
In particular, $y\sim 12a^2t$ and $|z|$ is bounded.
\end{cor}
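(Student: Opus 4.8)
The plan is to extract the evolution equations directly from the preceding Lemma via the identity $\frac{d\omega}{dt}=-(\rho^b_\omega)^{(1,1)}$, and then to study the resulting first-order system. Writing
\[\frac{d\omega}{dt}=\ii\left(x'\zeta^{1\overline{1}}+y'\zeta^{2\overline{2}}+z'\zeta^{1\overline{2}}+\overline{z}'\zeta^{2\overline{1}}\right),\]
I would match the coefficients of $\zeta^{1\overline{1}}$, $\zeta^{2\overline{2}}$, and $\zeta^{1\overline{2}}$ against those of $-(\rho^b)^{(1,1)}$. Since $\rho^b$ is a real form, the ``$+conjugates$'' in the Lemma means one adds the complex conjugate of the displayed expression; the $(2,0)$ term $-\lambda\zeta^{12}$ and its $(0,2)$ conjugate then contribute nothing to the $(1,1)$-part, which immediately forces $x'=0$.

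For the remaining two equations I would substitute $\lambda=-b+\ii a$ and simplify. The coefficient of $\zeta^{1\overline{2}}$ in $\rho^b$ is $\lambda\frac{2azx+\ii\bar\lambda zx}{xy-|z|^2}$, and the short computation $\lambda(2a+\ii\bar\lambda)=-2ab+\ii(3a^2+b^2)$ reduces the matching condition $\ii z'=-\lambda\frac{2azx+\ii\bar\lambda zx}{xy-|z|^2}$ to the stated formula for $z'$. For $y'$ I would use $\overline{\zeta^{2\overline{2}}}=-\zeta^{2\overline{2}}$: adding the conjugate of the $\zeta^{2\overline{2}}$ term doubles its imaginary part and cancels its real part. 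Because $\lambda+\bar\lambda=-2b$ is real, only the purely imaginary $-6a^2\ii\,xy$ contribution to the displayed coefficient survives, and matching gives $y'=12a^2\frac{xy}{xy-|z|^2}=12a^2\left(1+\frac{|z|^2}{xy-|z|^2}\right)$.

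To obtain the asymptotics I would first note $x\equiv x_0$, and then compute
\[(|z|^2)'=2\Re(\bar z\,z')=-2(3a^2+b^2)\frac{x_0|z|^2}{x_0y-|z|^2}\leq 0,\]
using that the $2ab\ii$ factor in $z'$ is annihilated upon taking real parts; hence $|z|^2$ is nonincreasing and $|z|\leq|z_0|$ stays bounded. Since $y'\geq 12a^2>0$ the function $y$ increases without bound, so $\frac{|z|^2}{x_0y-|z|^2}\to 0$ and $y'\to 12a^2$, which yields $y\sim 12a^2t$.

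I expect the only real difficulty to be the algebraic bookkeeping in the coefficient extraction, not anything conceptual. The delicate points are keeping track of the reality of $\rho^b$---in particular the sign reversal $\overline{\zeta^{2\overline{2}}}=-\zeta^{2\overline{2}}$ that isolates the imaginary part of the $\zeta^{2\overline{2}}$ coefficient while leaving the $\zeta^{1\overline{2}}$ coefficient untouched by its conjugate---and substituting $\lambda=-b+\ii a$ cleanly, since it is exactly the separation of the complex coefficients into real and imaginary parts that produces the distinct terms of the evolution equations.
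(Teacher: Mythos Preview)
Your proposal is correct and follows essentially the same approach as the paper: extract the ODE system by matching the $(1,1)$-components of $-\rho^b$ from the preceding Lemma (the paper leaves this computation implicit), then compute $(|z|^2)'\le 0$ to bound $|z|$ and conclude $y'\to 12a^2$. Your derivation is simply more explicit than the paper's one-line proof, and your factor of $2$ in $(|z|^2)'$ is the correct one.
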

\begin{proof}
We compute
\[(|z|^2)'=-(3a^2+b^2)\frac{x_0|z|^2}{x_0y-|z|^2},\]
which shows that $|z|$ is bounded. It is then immediate that $y(t)/t\rightarrow
12a^2$.
\end{proof}
\begin{cor}
Let $\omega(t)$ be a locally homogeneous solution to pluriclosed flow on an
Inoue surface of type $S_A$. Then under the family of metrics $\frac{g(t)}{t}$,
$S_A$ converges to a circle of length $\sqrt{6}|a|$ in the Gromov-Hausdorff
sense. 
\end{cor}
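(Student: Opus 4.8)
The plan is to exhibit $S_A$ as a fiber bundle $\pi\colon S_A\to S^1$ over a circle and to show that, under the rescaling $\hat g(t)=g(t)/t$, the fibers shrink to points while the base circle stabilizes at length $\sqrt6|a|$, exactly as in the properly elliptic case treated above and following the collapsing argument of Tosatti and Weinkove \cite{tw1}. First I would read off the pointwise behavior of $\hat g$ from the preceding corollary: since $x\equiv x_0$, $y\sim 12a^2t$, and $|z|$ stays bounded, the rescaled coefficients satisfy $x/t\to0$, $z/t\to0$, and $y/t\to12a^2$. Writing the real frame $X_1,X_2,X_3,X_4$ dual to $Z_1=X_1-\ii X_2$, $Z_2=X_3-\ii X_4$, this says that $\hat g$ collapses the $Z_1$-plane $\langle X_1,X_2\rangle$ while $\hat g(X_3,X_3)=\hat g(X_4,X_4)=y/(2t)\to 6a^2$ and the cross terms die. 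Thus the only surviving directions are $X_3,X_4$, and the induced left-invariant metric on the two-dimensional solvable subgroup $\langle X_3,X_4\rangle$ — whose bracket $[X_4,X_3]=-2aX_3$ makes it the $ax+b$ group — converges to a constant multiple of the hyperbolic metric.

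Next I would set up the fibration. The group is $\arr\ltimes_\phi\arr^3$, with the $\arr$ factor the $X_4$-direction and $\phi$ the one-parameter group generated by $\mathrm{ad}_{X_4}$, which has the real eigenvalue $-2a$ on $X_3$ and the complex-conjugate pair $a\pm\ii b$ on $\langle X_1,X_2\rangle$. The Inoue lattice has the form $\Lambda\rtimes\mathbb{Z}$, where $\mathbb{Z}$ is generated by the time-one flow of $X_4$ (the normalization fixing the base period to $1$ and identifying the expanding real eigenvalue of the monodromy $A=\phi(1)$ with $e^{2|a|}=\alpha$), so that $S_A$ fibers over $S^1=\mathbb{Z}\backslash\arr$ with fiber the flat three-torus $T^3=\Lambda\backslash\arr^3$. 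Because $g$ is left-invariant, $\hat g(X_4,X_4)\to 6a^2$ is constant along the base, so the base circle has $\hat g$-length tending to $\sqrt6|a|$. I would then put on $S^1$ the metric of total length $\sqrt6|a|$ and, as in the properly elliptic proof, take any (not necessarily continuous) section $f$ with $\pi\circ f=\mathrm{id}$ and argue that $\pi$ and $f$ are $\epsilon$-Gromov--Hausdorff approximations for $t$ large.

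The heart of the matter, and the step I expect to be the main obstacle, is showing that the fibers have $\hat g$-diameter tending to $0$. The directions $X_1,X_2$ are unproblematic: their pointwise lengths are $O(t^{-1/2})$ and so collapse outright. The essential difficulty is the $X_3$-direction, whose pointwise $\hat g$-length stays bounded below (it tends to $\sqrt6|a|$) and hence does \emph{not} collapse pointwise. Here one uses the solvable geometry: in the limiting hyperbolic metric on $\langle X_3,X_4\rangle$ the $X_3$-orbits are horocycles, and the monodromy rescales them exponentially, by the factor $e^{-2a\tau}$ as one moves a distance $\tau$ along the base. Consequently a horocyclic arc can be shortened arbitrarily by first winding around the base (applying suitable powers of $A$), traversing a now-tiny arc, and returning; equivalently, the projection of $\Lambda$ onto the real $X_3$-eigenline of $A$ is \emph{dense}, because $\alpha$ is irrational, so the horocycles descend to the quotient with diameter zero. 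Making this quantitative — bounding the $\hat g$-distance between any two points of a fiber by a quantity that tends to $0$ uniformly as $t\to\infty$ — is exactly the collapsing estimate of \cite{tw1}, and it is the only non-formal ingredient.

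Finally I would assemble the pieces: the lower bound $d_{\hat g}(p,q)\ge d_{S^1}(\pi(p),\pi(q))-\epsilon$ follows from $\pi$ being essentially distance non-increasing onto $(S^1,\text{length }\sqrt6|a|)$, while the matching upper bound follows by concatenating a horizontal lift of a base geodesic with the short fiber path produced by the diameter estimate. Letting $t\to\infty$ then gives convergence of $(S_A,\hat g(t))$ to the circle of length $\sqrt6|a|$ in the Gromov--Hausdorff sense. I would close by remarking that, under this normalization, $|a|=\tfrac12\log\alpha$ with $\alpha$ the expanding eigenvalue of $A$, so the length $\sqrt6|a|$ is determined by the monodromy and hence only by the complex structure of $S_A$, as claimed.
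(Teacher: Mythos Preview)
Your approach is essentially the paper's: exhibit the $T^3$-bundle over $S^1$, show the base length stabilizes at $\sqrt{6}|a|$, show the fibers collapse, and conclude that $\pi$ together with a section are $\epsilon$-Gromov--Hausdorff approximations. The one substantive difference is how fiber collapse is justified. The paper observes that the real and imaginary parts of $Z_1$ span an integrable holomorphic distribution whose leaves are dense in each $T^3$ fiber---this is deduced from Inoue's theorem that $S_A$ contains no complex curves, since a closed leaf would be one. Because the $Z_1$-length $x$ is constant in $t$, leaf paths have $\hat g$-length $O(t^{-1/2})$, and density of the leaves then forces the fiber diameter to zero. You instead argue arithmetically that the projection of $\Lambda$ to the $X_3$-eigenline is dense; this is the same fact (density of the $\langle X_1,X_2\rangle$-leaves in $T^3$ is equivalent to density of $\pi_3(\Lambda)$ in $\arr$), and your justification via $A$-invariance of $\Lambda$ together with the irrationality of the real eigenvalue is a valid alternative to invoking the no-curves theorem.

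One caution: your ``horocycle winding'' heuristic is misleading as literally stated. Winding $N$ times around the base and returning costs roughly $2N\sqrt{6}|a|$ in $\hat g$, which does \emph{not} tend to zero, so shortening an $X_3$-arc by leaving the fiber does not by itself give collapse. What actually works---and what your density statement really encodes---is that \emph{within a single fiber} one reaches any target $X_3$-height by a path in the collapsing $\langle X_1,X_2\rangle$-plane out to a suitable lattice translate (length $O(t^{-1/2})$), followed by an arbitrarily short $X_3$-jump. Once phrased this way, your argument and the paper's coincide; the paper simply gets the needed density more directly from Inoue's result.
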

\begin{proof} 
We recall the construction of an Inoue surface of type $S_A$. Choose a matrix
$A\in SL_3\mathbb{Z}$ with eigenvalues $\alpha$, $\bar{\alpha}$, and
$c=|\alpha|^{-2}$, where $\alpha\neq\bar{\alpha}$ and $|\alpha|\neq 1$. Write
$\alpha=|\alpha|e^{i\theta}$. Consider the solvable Lie group $G$ of matrices of
the form
\[\begin{pmatrix}\alpha^s & 0 & w\\0 & c^s & r\\0 & 0 & 1\end{pmatrix},\] where
$r,s\in\arr$, $w\in\cee$. This group has Lie brackets given by
$[X_4,X_1]=aX_1+bX_2$, $[X_4,X_2]=-bX_1+aX_2$, and $[X_4,X_3]=-2aX_3$, where
$a=\log |\alpha|$ and $b=\theta$. There is a natural identification of $G$ with
$\cee\times\cee H^1$ given by sending an element as before to $(w,r+c^s\ii)$.
Under such an identification, left multiplication is a biholomorphism. Let
$(a_1,a_2,a_3)^T$ be an eigenvector of $\alpha$ and $(c_1,c_2,c_3)^T$ be a real
eigenvector of $c$. Consider the lattice $\Gamma$ generated by the elements
\[g_0=\begin{pmatrix}
\alpha & 0 & 0\\0 & c & 0\\0 & 0 & 1
\end{pmatrix}\gap 
g_i=\begin{pmatrix}
1 & 0 & a_i \\ 0 & 1 & c_i \\ 0 & 0 & 1
\end{pmatrix}.\]
As shown by Inoue \cite{ino}, the quotient $\Gamma\backslash G=S_A$ forms a
compact complex surface of class $VII_0$ with the property that it has no
complex curves. Moreover, $S_A$ is a $T^3$ bundle over $S^1$, where the
projection $\pi:S_A\to S^1$ is given by mapping (the equivalence classes of)
$(w,r+c^s\ii)$ to $c^s$; this projection is induced by the natural projection of
$\cee\times\cee H^1$ to the imaginary axis of the second factor.

Consider the closed curve $\gamma:S^1\to S_A$ given by
$\gamma(s)=(0,c^s\ii)$. Note that this is well defined as a map of equivalence
classes and $\pi\circ\gamma=id$. Let $\omega(t)$ be a left invariant solution to
pluriclosed flow on $G$. With respect to $\omega(t)$, $\gamma$ has length
$L_\omega(\gamma)=\sqrt{\frac{y(t)}{2}}$. Therefore, with respect to the metrics
$\frac{\omega(t)}{t}$, the length of $\gamma$ approaches $\sqrt{6}|a|$ as
$t\to\infty$. Because there are no curves in $S_A$, the real and imaginary parts
of $Z_1$ form an integrable distribution whose leaves are dense in each $T^3$
fiber. Now, because the length of $Z_1$ with respect to $\omega(t)$ is fixed in
time, for any $\epsilon>0$ the diameter of each $T^3$ fiber with respect to the
metric $\frac{\omega(t)}{t}$ is less than $\epsilon$ for $t$ sufficiently large.
Therefore $\gamma$ and $\pi$ are $\epsilon$-Gromov-Hausdorff approximations
between the circle of length $\sqrt{6}|a|$ and $S_A$ with the metric
$\frac{\omega(t)}{t}$ for $t$ 
sufficiently large.
\end{proof}
\begin{lemma}
Let $\omega$ be a left invariant Hermitian metric on $Sol_1$ with frame given by
\[d\zeta^1=\zeta^{1\bar 1}\gap d\zeta^2=\zeta^{12}+\zeta^{1\bar 2}.\]
Then
\[\eta_1=-\ii \frac{2xy-|z|^2-z^2}{xy-|z|^2},\]
\[\eta_2=-\ii \frac{y(\bar{z}-z)}{xy-|z|^2},\]
and so
\[\rho^b_{1\bar 1}=-\ii \frac{4xy-(z+\bar z)^2}{xy-|z|^2},\]
\[\rho^b_{2\bar 2}=0,\]
\[\rho^b_{1\bar 2}=-\ii \frac{y(\bar z - z)}{xy-|z|^2}\]
\end{lemma}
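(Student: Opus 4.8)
The plan is to apply Proposition \ref{vez}, which reduces the whole computation to the structure constants of the Lie algebra together with the inverse of the metric. First I would recover the structure constants from the given coframe. For left-invariant forms and fields the relation $d\zeta^k(X,Y)=-\zeta^k([X,Y])$ holds, so reading off the coefficients of $d\zeta^1=\zeta^{1\bar 1}$ and $d\zeta^2=\zeta^{12}+\zeta^{1\bar 2}$ recovers exactly the brackets of $Sol_1$ listed earlier, namely $c_{12}^2=-1$, $c_{1\bar 1}^{1}=-1$, $c_{1\bar 1}^{\bar 1}=1$, and $c_{1\bar 2}^{2}=-1$, along with their conjugates (in particular $c_{2\bar 1}^{\bar 2}=1$). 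From these the two traces entering Vezzoni's formula are immediate: $c_{1j}^j=c_{12}^2=-1$ and $c_{2j}^j=0$.

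Next I would invert the metric. Writing $g_{i\bar j}$ for the Hermitian matrix with entries $x,y,z,\bar z$ and determinant $D=xy-|z|^2$, the needed inverse components are $g^{\bar 1 1}=y/D$, $g^{\bar 1 2}=-z/D$, $g^{\bar 2 1}=-\bar z/D$, $g^{\bar 2 2}=x/D$. Substituting into $\eta_i=\ii\,c_{ij}^j-\ii\,g^{\bar j k}c_{k\bar j}^{\bar l}g_{i\bar l}$, the key observation is that only the two antiholomorphic constants $c_{1\bar 1}^{\bar 1}$ and $c_{2\bar 1}^{\bar 2}$ survive the contraction, both forcing $\bar j=\bar 1$; this collapses the double sum to the intermediate quantities $g^{\bar 1 1}=y/D$ and $g^{\bar 1 2}=-z/D$. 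Contracting these against $g_{i\bar l}$ and combining with the trace term yields $\eta_1=-\ii(2xy-|z|^2-z^2)/D$ and $\eta_2=-\ii\,y(\bar z-z)/D$; the only care required is to keep $z$ and $\bar z$ distinct, since the off-diagonal metric entries are genuinely complex.

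Finally I would compute $\rho^b=d\eta$. Because $x,y,z$ are constant, $\eta$ has constant coefficients, so $d\eta=\eta_i\,d\zeta^i+\eta_{\bar i}\,d\zeta^{\bar i}$; I would differentiate using $d\zeta^1,d\zeta^2$ together with the conjugate relations $d\zeta^{\bar 1}=-\zeta^{1\bar 1}$ and $d\zeta^{\bar 2}=\zeta^{\bar 1\bar 2}-\zeta^{2\bar 1}$, then discard the $(2,0)$ and $(0,2)$ pieces. Collecting the surviving $(1,1)$ terms gives $\rho^b_{1\bar 1}=\eta_1-\overline{\eta_1}$, $\rho^b_{1\bar 2}=\eta_2$, and no $\zeta^{2\bar 2}$ contribution at all, so $\rho^b_{2\bar 2}=0$.

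I expect the one genuine obstacle to be the algebraic simplification of $\rho^b_{1\bar 1}$: expanding $\eta_1-\overline{\eta_1}$ produces $-\ii(4xy-2|z|^2-z^2-\bar z^2)/D$, and recognizing the identity $2|z|^2+z^2+\bar z^2=(z+\bar z)^2$ is what collapses this to the clean form $\rho^b_{1\bar 1}=-\ii(4xy-(z+\bar z)^2)/D$. Everything else is bookkeeping: tracking which wedge products are of type $(1,1)$ and being attentive to the sign flips $\zeta^{\bar i j}=-\zeta^{j\bar i}$ that appear when conjugating the coframe equations.
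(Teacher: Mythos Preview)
Your proposal is correct and is precisely the computation the paper intends: the lemma is stated without proof, and the implicit argument is exactly the direct application of Proposition~\ref{vez} that you carry out, first reading off the structure constants from the coframe, then contracting against the inverse metric to obtain $\eta_1,\eta_2$, and finally differentiating $\eta$ with constant coefficients to get the $(1,1)$-components of $\rho^b$. Your identification of the only two surviving mixed constants $c_{1\bar 1}^{\bar 1}$ and $c_{2\bar 1}^{\bar 2}$, and the use of $(z+\bar z)^2=z^2+2|z|^2+\bar z^2$ to simplify $\rho^b_{1\bar 1}$, match the stated result exactly.
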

\begin{cor}
Pluriclosed flow of a left invariant metric on $Sol_1$ is given by
\[x'=\frac{4xy-(z+\bar z)^2}{xy-|z|^2}\]
\[y'=0\]
\[z'=\frac{y(\bar z-z)}{xy-|z|^2}.\]
In particular, $x\sim 4t$ and $z$ is bounded.
\end{cor}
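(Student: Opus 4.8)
The plan is to read off the evolution equations by matching coefficients and then to extract the asymptotics from a conservation law hidden in the $z$-equation. First I would combine the preceding lemma with the relation $\frac{d\omega}{dt}=-(\rho^b_\omega)^{(1,1)}$ established earlier. Since $\omega=\ii(x\zeta^{1\bar1}+y\zeta^{2\bar2}+z\zeta^{1\bar2}+\bar z\zeta^{2\bar1})$, differentiating in $t$ gives $\frac{d\omega}{dt}=\ii(x'\zeta^{1\bar1}+y'\zeta^{2\bar2}+z'\zeta^{1\bar2}+\bar z{}'\zeta^{2\bar1})$, and matching the $\zeta^{1\bar1}$, $\zeta^{2\bar2}$, and $\zeta^{1\bar2}$ coefficients against $-\rho^b_{1\bar1}$, $-\rho^b_{2\bar2}$, $-\rho^b_{1\bar2}$ produces the three stated ODEs; the factors of $\ii$ cancel throughout. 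This is the routine part.

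The key observation for the asymptotics is that $\bar z-z=-2\ii\,\Im z$, so the numerator of the $z$-equation is purely imaginary while the denominator $xy-|z|^2$ is real and positive. Writing $z=p+\ii q$ with $p,q$ real, this forces $p'=\Re z'=0$, so $p\equiv p_0$ is conserved, while $q'=-2y_0q/(xy_0-|z|^2)$ after using $y\equiv y_0$ (from $y'=0$). Because the denominator is positive and $y_0>0$, the quantities $q$ and $q'$ have opposite signs, so $|q|$ is non-increasing and hence $|q|\le|q_0|$. This yields $|z|^2=p_0^2+q^2\le p_0^2+q_0^2$, proving $z$ is bounded.

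For $x\sim 4t$, I would use the conservation of $p$ to simplify the numerator $4xy-(z+\bar z)^2=4xy_0-4p_0^2=4(xy_0-p_0^2)$, so that, setting $D:=xy_0-p_0^2$, the $x$-equation reads $x'=4D/(D-q^2)$. Since $D-q^2=xy_0-|z|^2>0$, this gives $x'\ge 4$, whence $x(t)\ge x_0+4t\to\infty$ and in particular $D\to\infty$. As $q^2\le q_0^2$ remains bounded while $D\to\infty$, we obtain $x'=4/(1-q^2/D)\to 4$, and an averaging argument (dividing $x(t)=x_0+\int_0^t x'(s)\,ds$ by $t$) then gives $x(t)/t\to 4$, i.e. $x\sim 4t$.

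I do not anticipate a serious obstacle; the content of the argument is the single observation that $\Re z$ is conserved, which simultaneously trivializes the $(z+\bar z)^2$ term in the $x$-equation and reduces the $z$-dynamics to a scalar decay equation for $q$. The only point requiring care is confirming that $x\to\infty$ (forced by $x'\ge4$) before concluding $q^2/D\to0$, since the limit $x'\to4$ is precisely what upgrades the crude bound $x\ge x_0+4t$ to the sharp asymptotic $x\sim4t$.
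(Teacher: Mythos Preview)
Your proposal is correct and follows essentially the same route as the paper: show $|z|$ is bounded by monotonicity, deduce $x'\ge 4$, and conclude $x\sim 4t$. The paper does this in one stroke by computing $(|z|^2)'=\dfrac{y(z-\bar z)^2}{xy-|z|^2}\le 0$, whereas you split $z=p+\ii q$ and observe the slightly finer fact that $p=\Re z$ is actually conserved; this extra observation is not needed for the boundedness of $z$ but does make your derivation of the sharp asymptotic $x'\to 4$ cleaner than the paper's terse ``we conclude the result.''
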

\begin{proof}
We compute
\[(|z|^2)'=\frac{y(z-\bar z)^2}{xy-|z|^2}\leq 0,\]
which shows that $|z|$ is bounded. Then, noting that $x'\geq 4$, we conclude the
result.
\end{proof}
\begin{lemma}
Let $\omega$ be a left invariant Hermitian metric on $Sol_1'$ with frame satisfying
\[d\zeta^1=\zeta^{1\bar 1}\gap d\zeta^2=-\zeta^{1\bar 1}+\zeta^{1\bar
2}+\zeta^{12}.\]
Then
\[\eta_1=-\ii \frac{2xy-yz-|z|^2-z^2}{xy-|z|^2},\]
\[\eta_2=-\ii \frac{y(\bar z - z)-y^2}{xy-|z|^2},\]
and so
\[\rho^b_{1\bar 1}=-\ii \frac{4xy-y(z+\bar z)-(z+\bar z)^2+2y^2}{xy-|z|^2},\]
\[\rho^b_{2\bar 2}=0\]
\[\rho^b_{1\bar 2}=-\ii \frac{y(\bar z + z)-y^2}{xy-|z|^2}\]
\end{lemma}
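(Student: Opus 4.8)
The plan is to apply Proposition~\ref{vez} verbatim, exactly as in the preceding lemmas for the other solvable models: the Bismut--Ricci form is $\rho^b=d\eta$ with $\eta_i=\ii c^j_{ij}-\ii g^{\bar j k}c^{\bar l}_{k\bar j}g_{i\bar l}$, so the whole computation reduces to extracting the structure constants, contracting them against the inverse metric to obtain $\eta_1,\eta_2$, and then differentiating $\eta$.

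First I would record the structure constants, reading them either from the Maurer--Cartan equations in the hypothesis or, equivalently, from the second complex structure on $Sol_1^4$ listed in Section~2, namely $[Z_1,Z_2]=-Z_2$, $[Z_1,\overline{Z_2}]=-Z_2$, and $[Z_1,\overline{Z_1}]=\overline{Z_1}-Z_1+Z_2-\overline{Z_2}$, together with their conjugates. The only data the formula needs are the holomorphic trace constants $c^j_{ij}$---here $c^j_{1j}=c^2_{12}=-1$ and $c^j_{2j}=0$---and the mixed constants $c^{\bar l}_{k\bar j}$ that feed the second term, the nonzero ones being $c^{\bar 1}_{1\bar 1}=1$, $c^{\bar 2}_{1\bar 1}=-1$, and $c^{\bar 2}_{2\bar 1}=1$.

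Next I would substitute into $\eta_i$, using the inverse-metric entries $g^{\bar 1 1}=y/\Delta$, $g^{\bar 2 2}=x/\Delta$, $g^{\bar 1 2}=-z/\Delta$, and $g^{\bar 2 1}=-\bar z/\Delta$, where $\Delta=xy-|z|^2$, and then contracting with $g_{i\bar l}$. Collecting terms (and using $z^2+\bar z^2=(z+\bar z)^2-2|z|^2$ where convenient) should reproduce the stated $\eta_1$ and $\eta_2$.

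Finally, since the metric is left invariant the coefficients $\eta_i$ are constant, so $\rho^b=d\eta=\eta_i\,d\zeta^i+\overline{\eta_i}\,d\zeta^{\bar i}$. Substituting the structure equations and their conjugates---recalling that conjugation reverses the wedge, so $d\zeta^{\bar 1}=-\zeta^{1\bar 1}$ and $d\zeta^{\bar 2}=\zeta^{1\bar 1}-\zeta^{2\bar 1}+\zeta^{\bar 1\bar 2}$---I would read off the $(1,1)$-part: the $\zeta^{1\bar 1}$-coefficient collects $\eta_1-\eta_2-\overline{\eta_1}+\overline{\eta_2}$ and simplifies to $\rho^b_{1\bar 1}$, no term produces $\zeta^{2\bar 2}$ so that $\rho^b_{2\bar 2}=0$, and $\zeta^{1\bar 2}$ is fed only by $\eta_2\,d\zeta^2$, giving $\rho^b_{1\bar 2}=\eta_2$. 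The step I expect to demand the most care is precisely this last bookkeeping: the signs produced by $\overline{\zeta^{i\bar j}}=-\zeta^{j\bar i}$ together with the off-diagonal inverse-metric entry $g^{\bar 1 2}=-z/\Delta$ are where a slip is most likely, and they are exactly what distinguishes this case from the slightly simpler $Sol_1$ computation, whose $d\zeta^2$ carries no $\zeta^{1\bar 1}$ term.
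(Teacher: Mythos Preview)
Your plan is exactly the paper's own (implicit) method: read off the structure constants, feed them into Proposition~\ref{vez}, and then exterior-differentiate $\eta$. Your identification of the nonzero $c^{\bar l}_{k\bar j}$ and of the inverse-metric entries is right, and the $\zeta^{1\bar 1}$-bookkeeping $\rho^b_{1\bar 1}=\eta_1-\eta_2-\overline{\eta_1}+\overline{\eta_2}$ is correct and does collapse (via $z^2+\bar z^2=(z+\bar z)^2-2|z|^2$) to the displayed formula.

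One point to flag: your observation that $\zeta^{1\bar 2}$ is fed only by $\eta_2\,d\zeta^2$, hence $\rho^b_{1\bar 2}=\eta_2$, is correct---but then $\rho^b_{1\bar 2}=-\ii\,\dfrac{y(\bar z- z)-y^2}{xy-|z|^2}$, with a \emph{minus} between $\bar z$ and $z$, not the ``$\bar z+z$'' printed in the lemma. That printed sign is a typo: the very next corollary in the paper uses $z'=\dfrac{y(\bar z- z)-y^2}{xy-|z|^2}$, which is exactly $\ii\rho^b_{1\bar 2}$ with the minus sign, and the subsequent analysis (imaginary part of $z$ bounded; for real $z$, $z'=-1/(x-z^2)$ after normalizing $y_0=1$) only works with $\bar z-z$. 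So your computation is right and silently corrects the misprint; just don't be thrown when the signs don't match the displayed $\rho^b_{1\bar 2}$.
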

\begin{cor}
Pluriclosed flow of a left invariant metric on $Sol_1'$ is given by
\[x'=\frac{4xy-y(z + \bar z)-(z + \bar z)^2+2y^2}{xy-|z|^2}\gap y'=0,\]
\[z'=\frac{y(\bar z - z)-y^2}{xy-|z|^2}.\]
In particular, $x\sim 4t$ and $|z|=O(\log t)$.
\end{cor}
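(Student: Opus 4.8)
The plan is to proceed exactly as in the preceding cases: read the flow equations off the Bismut-Ricci form just computed, and then extract the asymptotics $x\sim 4t$ and $|z|=O(\log t)$ from the resulting ODE system. First I would match $\frac{d\omega}{dt}=-(\rho^b_\omega)^{(1,1)}$ against $\frac{d\omega}{dt}=\ii(x'\zeta^{1\bar 1}+y'\zeta^{2\bar 2}+z'\zeta^{1\bar 2}+\bar z'\zeta^{2\bar 1})$ componentwise, using the values of $\rho^b_{1\bar 1}$, $\rho^b_{2\bar 2}$, $\rho^b_{1\bar 2}$ from the previous lemma. Since $\rho^b_{2\bar 2}=0$ this immediately forces $y'=0$, so $y\equiv y_0$ is constant, and the stated equations for $x'$ and $z'$ follow. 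The real content is the long-time behavior, and the new feature here, in contrast to the Inoue cases $S_A$ and $Sol_1$ treated above, is that $|z|$ is no longer bounded: one checks that $(|z|^2)'$ is not sign-definite, so the earlier style of argument (bounding $|z|$ directly) does not apply.

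The key step is to control the determinant $\Delta:=xy-|z|^2$, which is positive by the admissibility condition on the metric. I would compute $\Delta'=y_0x'-(|z|^2)'$ directly from the flow equations; after substituting the expressions for $x'$ and $z'$ and using $y\equiv y_0$, I expect the messy numerator to collapse into an identity of the shape
\[
\Delta'=4y_0+\frac{(\text{a nonnegative quantity in }z\text{ and }y_0)}{\Delta}.
\]
In particular $\Delta'\geq 4y_0>0$, so $\Delta$ grows at least linearly, $\Delta(t)\geq\Delta(0)+4y_0t$, and moreover $\Delta'\to 4y_0$ since the remainder is $O(1/\Delta)$. This linear growth of the common denominator of all the flow equations is what decouples the system.

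With $\Delta\gtrsim t$ in hand I would write $z=p+\ii q$ and split the $z$-equation into real and imaginary parts. One part satisfies a homogeneous linear equation $q'=-\frac{2y_0}{\Delta}q$, which forces $|q|$ to be nonincreasing (indeed $q\to 0$), while the other obeys $p'=-\frac{y_0^2}{\Delta}$. Because $\Delta\geq\Delta(0)+4y_0t$, the right-hand side of the $p$-equation is $O(1/t)$, and integrating yields $|p|\leq\frac{y_0}{4}\log t+C$. Hence $|z|^2=p^2+q^2=O((\log t)^2)$, that is $|z|=O(\log t)$. Feeding this back, the identity $x=(\Delta+|z|^2)/y_0$ together with $\Delta\sim 4y_0t$ gives $x\sim 4t$.

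The main obstacle is precisely this coupling: since $|z|$ genuinely grows, one cannot bound it before understanding $x$, yet the $x$-equation involves $z$. The computation of $\Delta'$ is what breaks the circularity, because it shows the shared denominator grows linearly independently of the detailed behavior of $z$; once that is established the remaining estimates are routine integrations. I therefore expect the only delicate points to be verifying the algebraic cancellation in $\Delta'$ and confirming that the borderline logarithmic rate for $z$ is the sharp behavior rather than merely an upper bound.
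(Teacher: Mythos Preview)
Your proposal is correct and follows essentially the same route as the paper: both arguments hinge on showing that the determinant $\Delta=xy-|z|^2$ satisfies $\Delta'\geq 4y_0$, hence grows linearly, after which integrating the $z'$-equation against $\Delta\gtrsim t$ yields the $O(\log t)$ bound. The only difference is cosmetic: the paper first observes that $\Im z$ is bounded, reduces to the case $z\in\arr$ and rescales to $y_0=1$ (so that $\Delta=x-z^2$ and $\Delta'=4+2/\Delta$), whereas you carry the full complex $z$ and general $y_0$ through the same computation.
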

\begin{proof}
First, note that the imaginary part of $z$ is bounded and it suffices to assume
that $z$ is real. For simplicity, rescale the initial condition so that $y_0=1$.
Then $x$ and $z$ satisfy the system
\[x'=4+2\frac{1-z}{x-z^2}\gap z'=-\frac{1}{x-z^2}.\]
This implies that
\[0> z'\geq-\frac{1}{4t+x_0-z_0^2},\]
which gives $|z|=O(\log t)$.
\end{proof}
\begin{cor}
Let $\omega(t)$ be a locally homogeneous solution to pluriclosed flow on an
Inoue surface of type $S^+$. Then under the family of metrics
$\frac{\omega(t)}{t}$ the surface $S^+$ converges to a circle of length
$\sqrt{2}|\log\lambda|$ in the Gromov-Hausdorff sense, where $\lambda$ depends
on the construction of $S^+$. For an Inoue surface of type $S^-$ the surface
converges to a circle of length $2\sqrt{2}|\log\lambda|$.
\end{cor}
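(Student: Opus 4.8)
The plan is to argue exactly as for the surfaces $S_A$ above: I would exhibit $S^\pm$ as a nilmanifold bundle over a circle and then show that under $\frac{\omega(t)}{t}$ the base circle survives with the asserted length while the fibers collapse. Writing $e_1=Z_1+\overline{Z_1}$, $e_2=\ii(Z_1-\overline{Z_1})$, $e_3=Z_2+\overline{Z_2}$, $e_4=\ii(Z_2-\overline{Z_2})$ for the real and imaginary parts of the frame, the brackets of $Sol_1$ recorded in Section 2 give
\[
[e_1,e_2]=2e_2,\quad [e_1,e_3]=-2e_3,\quad [e_2,e_3]=-2e_4,
\]
with $e_4$ central (for $Sol_1'$ the first bracket acquires an extra summand $-2e_4$, which changes nothing below). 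Thus $Sol_1^4=\arr\ltimes N$, where $N$ is the Heisenberg group with algebra $\langle e_2,e_3,e_4\rangle$ and the semisimple generator $e_1=\Re Z_1$ acts on $N$ by the derivation with eigenvalues $2,-2,0$. A compact quotient $S^\pm=\Gamma\backslash Sol_1^4$ is the mapping torus of the hyperbolic automorphism $\exp(s_0\,\mathrm{ad}\,e_1)$ of $N$, and projecting to position along the $e_1$-line modulo the lattice gives a fibration $\pi\colon S^\pm\to S^1$ with fiber the nilmanifold $\Lambda\backslash N$. The period $s_0$ is fixed by requiring the expanding eigenvalue $e^{2s_0}$ of $\exp(s_0\,\mathrm{ad}\,e_1)$ to equal the Perron root $\lambda$ of the defining integer matrix, giving $s_0=\frac{1}{2}|\log\lambda|$ for $S^+$; for $S^-$ the defining matrix has determinant $-1$, so its monodromy is realized only after doubling the period, and $s_0=|\log\lambda|$.

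Next I would produce the circle. Let $\gamma\colon S^1\to S^\pm$ be the integral curve of the left-invariant field $e_1$ through a base point chosen so that it closes up after one period, so that $\pi\circ\gamma=\mathrm{id}$; any nilpotent discrepancy in the monodromy lies in the fiber and is absorbed in the collapse step. Since $e_1$ is left-invariant and $g(Z_1,\overline{Z_1})=x$, its length is constant on the group,
\[
|e_1|^2_{\omega(t)}=g(Z_1+\overline{Z_1},\,Z_1+\overline{Z_1})=2x(t),
\]
and therefore $L_{\omega(t)}(\gamma)=\sqrt{2x(t)}\,s_0$. By the corollary computing the flow on $Sol_1$ (resp. $Sol_1'$) we have $x\sim 4t$, so with respect to $\frac{\omega(t)}{t}$ the length of $\gamma$ tends to $\sqrt{8}\,s_0=2\sqrt{2}\,s_0$, which is $\sqrt{2}|\log\lambda|$ for $S^+$ and $2\sqrt{2}|\log\lambda|$ for $S^-$.

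It remains to show that the fibers collapse, and this is the step I expect to be the main obstacle. The fiber directions are $e_2=\Im Z_1$, $e_3=\Re Z_2$, and $e_4=\Im Z_2$. Along $e_3$ and $e_4$ the metric coefficient $y$ is constant in $t$, so these directions collapse outright under $\frac{\omega(t)}{t}$, and $\langle e_3,e_4\rangle$ is an abelian ideal of the fiber algebra. The remaining direction $e_2$ is the expanding eigendirection of the monodromy, whose length grows like $\sqrt{x}\sim\sqrt{t}$ and so does not shrink pointwise. Here I would use, exactly as in the $S_A$ argument, that $S^\pm$ contains no complex curves: this forces $e_2$ to have irrational slope relative to $\Lambda$, so that the image of $\Lambda$ under the quotient $N\to N/\langle e_3,e_4\rangle=\arr\langle e_2\rangle$ is dense. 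Hence collapsing the $Z_2$-directions and passing to the quotient shrinks each fiber to a point, and the fiber diameter with respect to $\frac{\omega(t)}{t}$ tends to $0$.

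With the fibers collapsing and $L_{\frac{\omega(t)}{t}}(\gamma)$ tending to the stated value, $\pi$ and $\gamma$ become mutually $\epsilon$-Gromov--Hausdorff approximations for $t$ large, exactly as for $S_A$, which gives convergence of $(S^\pm,\frac{\omega(t)}{t})$ to the circle. The crux is thus the collapse: one must reconcile the unbounded growth of length along the expanding direction $e_2$ with the collapse of the fiber, which rests on the density of the expanding foliation -- equivalently, the absence of curves on $S^\pm$ -- together with the dense projection of the lattice onto the surviving line $\arr\langle e_2\rangle$.
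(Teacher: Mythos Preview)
Your proposal is correct and follows essentially the same strategy as the paper's brief argument: both exhibit $S^\pm$ as a nilmanifold bundle over $S^1$, produce a closed curve $\gamma$ in the base direction whose rescaled length tends to the stated value (the paper uses $\gamma(t)=(\ii\lambda^t,0)$, you use the integral curve of $e_1$), and deduce fiber collapse from the absence of complex curves. Your density condition---that the lattice projects densely onto the quotient line $N/\langle e_3,e_4\rangle=\arr\langle e_2\rangle$---is exactly the dual formulation of the paper's statement that the $Z_2$-leaves are dense in each fiber, and your explicit Lie-algebra and length computations fill in details the paper leaves to the reader by analogy with the $S_A$ case.
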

\begin{proof}
We recall the construction of an Inoue surface of type $S^+$. Let $N\in
SL_2\mathbb{Z}$ have positive eigenvalues $\lambda\neq 1,\lambda^{-1}$ and
corresponding eigenvectors $(a_1,a_2)^T,(b_1,b_2)^T$. Choose integers $j,k,l$
with $l\neq 0$ and a complex number $\kappa$. Let $Sol_1^4$ be the group of
matrices of the form
\[\begin{pmatrix}
1 & u & v\\
0 & q & r\\
0 & 0 & 1
\end{pmatrix}\]
where $q,r,u,v\in\arr$ and $q>0$. For $m\in\arr$ this group has transitive
actions on $\cee H^1\times\cee$ with trivial stabilizers so that an element as
before maps $(\ii ,0)$ to
\[(r+\ii q,v+\ii (u+m\log q),\]
with $Sol_1$ corresponding to the $m=0$ and $Sol_1'$ corresponding to the $m\neq
0$ cases. By taking $m=\frac{\Im (\kappa)}{\log \lambda}$ we can obtain a
cocompact lattice $\Gamma$ generated by
\[g_0=\begin{pmatrix}
1 & 0 & \Re(\kappa)\\
0 & \lambda & 0\\
0 & 0 & 1
\end{pmatrix}\gap g_i=\begin{pmatrix}
1 & b_i & c_i\\
0 & 1 & a_i\\
0 & 0 & 1
\end{pmatrix}\gap g_3=\begin{pmatrix}
1 & 0 & \frac{b_1a_2-b_2a_1}{l}\\
0 & 1 & 0\\
0 & 0 & 1
\end{pmatrix},\]
where $c_i$ are determined by the previously chosen data. The quotient by this
lattice gives an Inoue surface of type $S^+$. An Inoue surface of type $S^{-}$
is formed by a similar quotient where $m=0$ and $\lambda$ is replaced with
$\lambda^2$. As shown by Inoue, these are compact complex surfaces of class
$VII_0$ with no curves and, similarly to the type $S_A$ case, these are bundles
over $S^1$ such that the real and imaginary parts of $Z_2$ span an integrable
distribution whose leaves are dense in each fiber. The curve $\gamma:[0,1]\to
S$ given by $\gamma(t)=(\ii \lambda^t,0)$ provides the relevant Gromov-Hausdorff
approximation for $t$ sufficiently large and we observe that the fibers are
shrinking with respect to the metrics $\frac{\omega(t)}{t}$ as in the case of Inoue surfaces of type $S_A$.
\end{proof}
We can now complete the proof of Theorem \ref{grohau}.
\begin{proof}
Let $\omega(t)$ be a locally homogeneous solution to pluriclosed flow on a compact complex surface which exists on $[0,\infty)$. If $\omega_0$ is K\"ahler then it is a product of K\"ahler-Einstein metrics with non-positive scalar curvatures. Under the rescaled metrics $\frac{\omega(t)}{t}$ the surface either converges to a product of K\"ahler-Einstein metrics with negative scalar curvature, as in the case where the universal covering metric is $\cee H^2$ or $\cee H^1\times\cee H^1$, collapses to a curve of genus $g\geq 2$, as in the case $\cee H^1\times\cee$, or collapses to a point, as in the case of a flat metric on $\cee^2$. If $\omega_0$ is non-K\"ahler, note that it must be a left invariant Hermitian metric on one of the Lie groups considered before. Given that it is not a solution on the Hopf surface, the claimed Gromov-Hausdorff convergence follows from the case by case analysis considered throughout this section.
\end{proof}
\section{Blowdown Limits of Homogeneous Solutions}
Let $(M,g(t))$ be a 1-parameter family of Riemannian manifolds for
$t\in(0,\infty)$. Suppose that a blowdown limit
$g_\infty(t)=\underset{s\rightarrow\infty}\lim s^{-1} g(st)$ exists in the sense
that there is a 1-parameter family of diffeomorphisms $\theta_s$ of $M$ such
that $\theta_{s_1}\circ\theta_{s_2}=\theta_{s_1s_2}$ and
$g_\infty(t)=\underset{s\rightarrow\infty}\lim \theta_s^*s^{-1} g(st)$ uniformly
on compact subsets of $M\times(0,\infty)$. Fix some $a>0$. Then
\[g_\infty(at)=\underset{s\rightarrow\infty}\lim \theta_s^*s^{-1}
g(sat)=a\theta_{a^{-1}}^*\underset{\tilde{s}\rightarrow\infty}\lim
\theta_{\tilde{s}}^*\tilde{s}^{-1}
g(\tilde{s}t)=a\theta_{a^{-1}}^*g_\infty(t).\]
This implies that
\[g_\infty(t)=tg_\infty(1)\]
up to diffeomorphisms of $M$. Therefore, if $g_\infty(t)$ satisfies some
geometric flow then it is an expanding soliton solution of that flow. This
construction has been used in \cite{lott2} to give expanding soliton solutions to Ricci flow
by performing blowdown limits of type III homogeneous Ricci flows.

In this section we will construct expanding soliton solutions to pluriclosed
flow by applying blowdown limits to the homogeneous solutions of the previous
section. 
We will write $X_1$ and $-X_2$ for the real and imaginary parts of $Z_1$
respectively, and similarly for $X_3$, $-X_4$, and $Z_2$. With respect to the
dual basis $\sigma^i$ to the $X_i$ we see that 
\[\zeta^1=\frac{1}{2}(\sigma^{1}+\ii
\sigma^{2})\gap\zeta^2=\frac{1}{2}(\sigma^3+\ii \sigma^4),\] and so our metrics
have the form
\[\omega=\frac{1}{2}(x\sigma^{12}+y\sigma^{34}-\Im(z)(\sigma^{13}+\sigma^{24}
)+\Re(z)(\sigma^{14}-\sigma^{23})).\]

\subsection{The Hyperelliptic Case}
\begin{prop}
Let $\omega(t)$ be a left invariant solution of pluriclosed flow on $\tilde
E(2)\times\arr$. There is a blowdown limit
$\omega_\infty(t)=\lim_{s\to\infty}s^{-1}\omega(st)$ of the form
\[\omega_\infty(t)=\frac{1}{2}(x_0\sigma^{12}+y_0\sigma^{34})\]
which is an expanding soliton solution. It is isometric to the flat metric on
$\cee^2$.
\end{prop}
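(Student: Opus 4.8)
The plan is to construct the blowdown limit explicitly by exploiting the ODE system already derived for $\tilde E(2)\times\arr$ in the Hyperelliptic Case. Recall from the corollary there that $x'=y'=0$, so $x\equiv x_0$ and $y\equiv y_0$, while $|z|=O(e^{-Ct})$ decays exponentially. The key observation is that the metric $\omega(t)$ converges in $C^\infty$ to the fixed metric $\frac{1}{2}(x_0\sigma^{12}+y_0\sigma^{34})$ as $t\to\infty$, and the off-diagonal $z$-term, which carries all the non-K\"ahler information, vanishes in the limit.

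First I would compute the rescaled metric $s^{-1}\omega(st)$ directly in the $\sigma^i$ frame. Using the expression $\omega=\frac{1}{2}(x\sigma^{12}+y\sigma^{34}-\Im(z)(\sigma^{13}+\sigma^{24})+\Re(z)(\sigma^{14}-\sigma^{23}))$ given above, I see that $s^{-1}\omega(st)$ has diagonal coefficients $s^{-1}x_0$ and $s^{-1}y_0$ and off-diagonal coefficients of size $s^{-1}|z(st)|=s^{-1}O(e^{-Cst})$. The naive limit as $s\to\infty$ is simply zero, so the content of the proposition is that one must apply the correct family of diffeomorphisms $\theta_s$ to extract a nontrivial limit. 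Since $\tilde E(2)\times\arr$ is a Lie group and the metrics are left invariant, the natural candidates for $\theta_s$ are automorphisms of the group (equivalently, a one-parameter family of Lie algebra rescalings) chosen so that $\theta_{s_1}\circ\theta_{s_2}=\theta_{s_1 s_2}$, as required by the blowdown framework set up at the start of Section 4.

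The main step is choosing $\theta_s$ so that the pulled-back rescaled metric $\theta_s^* s^{-1}\omega(st)$ converges to $\omega_\infty(t)=\frac{1}{2}(x_0\sigma^{12}+y_0\sigma^{34})$. I would take $\theta_s$ to act diagonally on the frame, scaling $\sigma^1,\sigma^2$ and $\sigma^3,\sigma^4$ by $\sqrt{s}$ so as to exactly cancel the $s^{-1}$ from the rescaling on the diagonal terms; this fixes the $x_0\sigma^{12}$ and $y_0\sigma^{34}$ pieces at their correct values while leaving the off-diagonal $z$-terms suppressed by the exponential decay of $|z|$, so they vanish in the limit. I would verify that these diagonal scalings are genuine automorphisms of $\tilde E(2)\times\arr$ — which is where the specific bracket structure $d\zeta^1=-\zeta^{12}+\zeta^{1\bar 2}$, $d\zeta^2=0$ matters, since the automorphism must respect these relations — and that the semigroup condition $\theta_{s_1}\circ\theta_{s_2}=\theta_{s_1 s_2}$ holds.

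The hard part will be confirming that the chosen rescalings are compatible with the group structure rather than merely linear maps on the frame, and that the limit is uniform on compact subsets of $M\times(0,\infty)$ as the blowdown definition demands; once this is done, the general argument at the opening of Section 4 shows $\omega_\infty(t)=t\,\omega_\infty(1)$ up to diffeomorphism, giving the expanding soliton property automatically. Finally, since $\omega_\infty(t)$ is diagonal with constant coefficients and the bracket contributions to the torsion vanish in the limit, I expect it to be a genuinely K\"ahler, flat metric; identifying it with the flat metric on $\cee^2$ is then a matter of checking that the limiting group is abelian (the $\tilde E(2)$ brackets degenerate under the blowdown) so that its universal cover with the left-invariant flat metric is $\cee^2$.
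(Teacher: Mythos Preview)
Your overall strategy matches the paper's: use the known ODE behavior ($x\equiv x_0$, $y\equiv y_0$, $|z|=O(e^{-Ct})$), rescale by diffeomorphisms that effectively multiply the diagonal $2$-forms $\sigma^{12},\sigma^{34}$ by $s$, and let the exponentially decaying $z$-terms disappear. The blowdown formalism of Section~4 then yields the expanding soliton property automatically.

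However, there is a concrete gap in the step you flag as ``the hard part.'' There is no diffeomorphism $\theta_s$ of $\tilde E(2)\times\arr$ with $\theta_s^*\sigma^i=\sqrt{s}\,\sigma^i$ for all $i$, automorphism or not. From $d\zeta^1=-\zeta^{12}+\zeta^{1\bar 2}$ one computes the real brackets $[X_1,X_4]=X_2$, $[X_2,X_4]=-X_1$; equivalently $d\sigma^1=\sigma^{24}$, $d\sigma^2=-\sigma^{14}$. If $\theta_s^*\sigma^i=\sqrt{s}\,\sigma^i$ then applying $d$ forces $\sqrt{s}\,d\sigma^1=s\,d\sigma^1$, which fails for $s\neq 1$ since $d\sigma^1\neq 0$. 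So the diagonal frame scaling you propose cannot come from any diffeomorphism of the group, and in particular the verification you plan will not succeed.

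The paper circumvents this by working in exponential coordinates rather than asking for automorphisms. It sets
\[
\psi_s(q,r,u,v)=\exp\bigl(\sqrt{s}(qX_1+rX_2)\bigr)\exp\bigl(\sqrt{s}(uX_3+vX_4)\bigr),\qquad \theta_s=\psi_s\circ\psi_1^{-1}.
\]
These $\theta_s$ are genuine diffeomorphisms satisfying $\theta_{s_1}\circ\theta_{s_2}=\theta_{s_1s_2}$ because in the $(q,r,u,v)$ coordinates they are just dilations. They are \emph{not} automorphisms: the pullbacks $\theta_s^*\sigma^1,\theta_s^*\sigma^2$ are $\sqrt{s}$ times a rotation of $\sigma^1,\sigma^2$ coming from conjugation by $\exp(\sqrt{s}vX_4)$. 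The key point is that this rotation preserves the $2$-form $\sigma^{12}$, so one still gets $\theta_s^*\sigma^{12}=s\,\sigma^{12}$ and $\theta_s^*\sigma^{34}=s\,\sigma^{34}$, which is all that is needed for the diagonal limit; the off-diagonal terms carry an extra bounded rotation factor but are killed by the $e^{-Cst}$ decay of $z$. Your outline goes through once you replace ``automorphism scaling the frame'' by this coordinate-level construction.
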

\begin{proof}
Recall that $x'=y'=0$, and $|z|=O(e^{-Ct})$ for some constant $C$ depending on
the initial conditions.
Define diffeomorphisms $\psi_s:\arr^4\to \tilde E(2)\times\arr$ by
\[\psi_s(q,r,u,v)=\alpha_s(q,r)\beta_s(u,v),\]
where 
\[\alpha_s(q,r)=\exp(\sqrt{s}(qX_1+rX_2))\] 
and \[\beta_s(u,v)=\exp(\sqrt{s}(uX_3+vX_4)).\] 
Write $\theta_s=\psi_s\circ\psi_1^{-1}$. We see that
\[s^{-1}\theta^*_s(\omega(st))=\frac{1}{2}(x_0\sigma^{12}+y_0\sigma^{34})+O(e^{
-Cst})\]
and so there is a blowdown limit
\[\omega_\infty(t)=\frac{1}{2}(x_0\sigma^{12}+y_0\sigma^{34}).\]
\end{proof}
\subsection{The Non-K\"ahler, Properly Elliptic Case}
Recall that our $T^{1,0}$ frame has Lie brackets
\[[Z_1,Z_2]=\ii Z_2\gap [Z_1,\overline{Z_2}]=\ii Z_1\]
\[[Z_1,\overline{Z_1}]=(\ii -\alpha)Z_2+(\ii +\alpha)\overline{Z_2}.\]
With respect to the basis $X_i$ described above, the Lie brackets are
\[[X_1,X_2]=X_3-\alpha X_4\gap [X_3,X_2]=X_1\gap [X_1,X_3]=X_2\]
and the complex structure is given by $JX_1=X_2$ and $JX_3=X_4$.
\begin{lemma}
Any element of $\tilde{SL_2}\arr\times\arr$ can be written uniquely as
$\exp(qX_1+rX_2)\exp(uX_3+vX_4)$.
\end{lemma}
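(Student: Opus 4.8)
The plan is to reduce the decomposition to the polar (Cartan) decomposition of $\tilde{SL_2}\arr$ together with a splitting of the central $\arr$ factor. First I would analyze the Lie algebra $\la$ spanned by $X_1,\dots,X_4$. Reading off the brackets $[X_1,X_2]=X_3-\alpha X_4$, $[X_3,X_2]=X_1$, $[X_1,X_3]=X_2$, one sees that $X_4$ appears on the left of no bracket, so $X_4$ is central and spans the $\arr$-factor of $\tilde{SL_2}\arr\times\arr$. Setting $Y=X_3-\alpha X_4$, the triple $\{X_1,X_2,Y\}$ satisfies $[X_1,X_2]=Y$, $[Y,X_1]=-X_2$, and $[Y,X_2]=X_1$, which is exactly $\mathfrak{sl}_2\arr$. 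Since $\mathrm{ad}_Y$ rotates $\mathrm{span}(X_1,X_2)$ while $\mathrm{ad}_{X_1}$ has real eigenvalues, $\arr Y=\mathfrak{k}$ is the (elliptic) maximal compact subalgebra and $\mathrm{span}(X_1,X_2)=\mathfrak{p}$ is its Cartan complement; the relations $[\mathfrak{k},\mathfrak{p}]\subseteq\mathfrak{p}$ and $[\mathfrak{p},\mathfrak{p}]\subseteq\mathfrak{k}$ identify this as the Cartan decomposition $\mathfrak{sl}_2\arr=\mathfrak{k}\oplus\mathfrak{p}$.

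Second, I would rewrite the two exponentials in terms of this splitting. Because $X_1,X_2\in\mathfrak{p}$ lie in the $\tilde{SL_2}\arr$ factor, $\exp(qX_1+rX_2)$ is a pure $\exp(\mathfrak{p})$ element. Because $X_4$ is central and commutes with $Y$,
\[\exp(uX_3+vX_4)=\exp(uY+(\alpha u+v)X_4)=\exp(uY)\exp((\alpha u+v)X_4),\]
so the second factor splits into $\exp(uY)\in K$ and the central element $\exp((\alpha u+v)X_4)$. Thus $\Phi(q,r,u,v):=\exp(qX_1+rX_2)\exp(uX_3+vX_4)$ is identified with the pair consisting of $\exp(qX_1+rX_2)\exp(uY)\in\tilde{SL_2}\arr$ and $\exp((\alpha u+v)X_4)$ in the central $\arr$.

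The claim then reduces to two independent solvability statements. For a target $(A,e^{sX_4})$, the central coordinate requires $\alpha u+v=s$, which determines $v$ uniquely once $u$ is known. The $\tilde{SL_2}\arr$ coordinate requires $\exp(qX_1+rX_2)\exp(uY)=A$, and here I would invoke the global Cartan decomposition in the form $G=\exp(\mathfrak{p})\,K$: the map $\mathfrak{p}\times K\to\tilde{SL_2}\arr$, $(P,k)\mapsto \exp(P)k$, is a diffeomorphism (obtained from the standard $k\exp(P)$ version by applying $g\mapsto g^{-1}$), giving unique $(q,r)$ and a unique $k=\exp(uY)$, hence a unique $u$. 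Composing these yields the unique $(q,r,u,v)$, and smoothness of the Cartan decomposition shows $\Phi$ is a diffeomorphism.

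The main obstacle is justifying the global Cartan decomposition on the universal cover rather than merely on $SL_2\arr$: since $\tilde{SL_2}\arr$ has infinite center, one must check that the analytic subgroup $K=\exp(\arr Y)$ is closed and isomorphic to $\arr$ (the lift of $SO(2)$, containing the central $\mathbb{Z}$), and that the polar map remains a global diffeomorphism onto $\tilde{SL_2}\arr$. This is covered by the Cartan decomposition theorem for connected semisimple groups, but the fact that $K$ is here \emph{noncompact} and $\exp|_{\arr Y}$ is injective---unlike in $SL_2\arr$, where it closes up to $S^1=SO(2)$---is exactly what makes the coordinate $u$ globally well defined, and this point must be verified with care.
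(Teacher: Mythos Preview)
Your argument is correct. The identification of $X_4$ as central, the change of basis $Y=X_3-\alpha X_4$ producing a genuine $\mathfrak{sl}_2\arr$ triple, the factorisation $\exp(uX_3+vX_4)=\exp(uY)\exp((\alpha u+v)X_4)$, and the reduction to the global Cartan decomposition $\tilde{SL_2}\arr=\exp(\mathfrak{p})\cdot K$ are all sound. Your caveat about $K$ being noncompact on the universal cover is exactly the point that makes $u$ globally well defined, and it is handled by the standard Cartan decomposition theorem for connected semisimple groups (the polar map $\mathfrak{p}\times K\to G$ is a diffeomorphism regardless of whether $G$ has finite center).

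The paper, by contrast, does not argue any of this directly: its entire proof is a citation of Lemma~3.34 in Lott's paper on long-time behavior of homogeneous Ricci flow, together with the bookkeeping remark that the roles of $X_1$ and $X_3$ are swapped relative to Lott's conventions and that $X_4$ is central. So your proof and the paper's are not really different \emph{approaches} so much as different levels of detail: Lott's lemma is itself a polar/Cartan-type decomposition statement for $\tilde{SL_2}\arr$, and you have in effect reconstructed its proof rather than cited it. What you gain is a self-contained argument that makes the role of the parameter $\alpha$ transparent (it is absorbed into the central coordinate via $v=s-\alpha u$); what the paper gains is brevity and a pointer to where the analogous Ricci-flow computation lives.
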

\begin{proof}
This follows from \cite{lott2}, Lemma 3.34, where we note that $X_4$ is central
and our $X_1$, $X_2$, and $X_3$ correspond, respectively, to $X_3$, $X_2$, and
$X_1$ of that Lemma.
\end{proof}
\begin{prop}
Let $\omega(t)$ be a left invariant solution to pluriclosed flow on
$\tilde{SL_2}\arr\times\arr$.
Then there is a blowdown limit $\omega_\infty(t)$ given by a product metric
\[\omega_\infty(t)=\omega_{\cee H^1}(t)\oplus\omega_{\cee}\]
on $\cee H^1\times \cee$ which is an expanding soliton solution.
\end{prop}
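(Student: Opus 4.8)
The plan is to imitate the blowdown argument from the hyperelliptic case, using the unique decomposition of group elements provided by the preceding Lemma together with an \emph{anisotropic} rescaling dictated by the flow asymptotics. Recall that along the flow $y$ is constant, $z=O(e^{-Ct})$, and $x\sim 2t$; so in the naive blowdown $s^{-1}\omega(st)$ the coefficient of the $X_1,X_2$ block tends to $2t$ (it is already balanced by the factor $s^{-1}$), while the $X_3,X_4$ block and the off-diagonal $z$-terms collapse. To de-collapse the fiber directions I would define diffeomorphisms $\psi_s:\arr^4\to\tilde{SL_2}\arr\times\arr$ by $\psi_s(q,r,u,v)=\exp(qX_1+rX_2)\exp(\sqrt{s}(uX_3+vX_4))$ and set $\theta_s=\psi_s\circ\psi_1^{-1}$, leaving $X_1,X_2$ unscaled but expanding the central and fiber directions $X_3,X_4$ by $\sqrt{s}$. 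This is the natural modification of the hyperelliptic choice, where both factors were scaled by $\sqrt{s}$ because there $x$ and $y$ were both constant; here the linear growth of $x$ removes the need to scale the base directions.

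The core of the argument is then to compute $s^{-1}\theta_s^*\omega(st)$ in the global coordinates $(q,r,u,v)$ and pass to the limit $s\to\infty$. Since $X_4$ is central and $X_1,X_2,X_3$ span an $\mathfrak{sl}_2\arr$, the representatives $\exp(qX_1+rX_2)$ parametrize the coset space $\tilde{SL_2}\arr/\widetilde{SO(2)}=\cee H^1$, with $X_3$ the $\widetilde{SO(2)}$ fiber direction. I expect the pullbacks $\theta_s^*\sigma^1,\theta_s^*\sigma^2$ of the left-invariant coframe to converge to the left-invariant coframe of the hyperbolic plane, so that the $X_1,X_2$ block of $s^{-1}\theta_s^*\omega(st)$ converges to an expanding hyperbolic metric $\omega_{\cee H^1}(t)=t\,\omega_{\cee H^1}(1)$; this is essentially forced, since a $\tilde{SL_2}\arr$-invariant metric on $\cee H^1$ is hyperbolic up to scale. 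Simultaneously, because $\theta_s^*\sigma^3$ and $\theta_s^*\sigma^4$ pick up the factor $\sqrt{s}$, the identity $s^{-1}\cdot y_0\cdot s=y_0$ shows the $X_3,X_4$ block converges to the fixed flat metric $\tfrac12 y_0\,\sigma^{34}=\omega_\cee$. The off-diagonal $z$-contributions vanish on their own, since $s^{-1}e^{-Cst}\sqrt{s}\to 0$.

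The main obstacle is the explicit computation of $\theta_s^*\sigma^i$ in the semisimple directions: because $X_1$ and $X_2$ do not commute, $\exp(qX_1+rX_2)$ is a single exponential rather than a product, and the pullback of the left-invariant coframe carries the usual $\tfrac{1-e^{-\mathrm{ad}}}{\mathrm{ad}}$ correction terms, including a connection-type piece landing in the fiber directions produced by $[X_1,X_2]=X_3-\alpha X_4$. I would organize this via the correspondence cited in the decomposition lemma, so that the coordinate formulas of \cite{lott2}, Lemma 3.34, apply after the relabeling $X_1,X_2,X_3\mapsto X_3,X_2,X_1$, and then track orders in $s$: the purely $X_1,X_2$ contributions assemble into the hyperbolic metric, whereas every term coupling base to fiber is of order $O(1)$ or $O(e^{-Cst})$ and is therefore lower order than the leading $O(\sqrt{s})$ fiber coframe, hence drops out after the $s^{-1}$ rescaling. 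This is precisely the mechanism by which the nontrivial fibration $\tilde{SL_2}\arr\to\cee H^1$ untwists into a product in the limit. Once $\omega_\infty(t)=\omega_{\cee H^1}(t)\oplus\omega_\cee$ on $\cee H^1\times\cee$ is established, the product structure is manifest, and the expanding soliton property follows immediately from the general blowdown computation at the beginning of this section giving $\omega_\infty(t)=t\,\omega_\infty(1)$ up to diffeomorphism (the static flat factor $\omega_\cee$ being reconciled with the scaling by a dilation of $\cee$).
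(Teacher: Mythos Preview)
Your approach is essentially identical to the paper's: same anisotropic diffeomorphisms $\psi_s(q,r,u,v)=\exp(qX_1+rX_2)\exp(\sqrt{s}(uX_3+vX_4))$, same asymptotics for $x,y,z$, same appeal to \cite{lott2} for the underlying $\tilde{SL_2}\arr$ computation. The paper organizes the pullback via the Maurer--Cartan form $h^{-1}dh=B_s^{-1}A^{-1}dAB_s+\sqrt{s}(du\,X_3+dv\,X_4)$ with $A=\exp(qX_1+rX_2)$ and $B_s=\exp(\sqrt{s}(uX_3+vX_4))$, which is exactly the bookkeeping you describe.

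One point of precision: you write that you expect $\theta_s^*\sigma^1,\theta_s^*\sigma^2$ to converge individually, but in fact they do not; since $X_3$ generates rotations in $\mathrm{span}(X_1,X_2)$, conjugation by $B_s$ rotates the $(X_1,X_2)$-components of $A^{-1}dA$ by an angle proportional to $\sqrt{s}\,u$, so the individual one-forms oscillate without limit as $s\to\infty$. What does converge (and is in fact independent of $s$) is the two-form $(B_s^{-1}A^{-1}dAB_s)_1\wedge(B_s^{-1}A^{-1}dAB_s)_2=(A^{-1}dA)_1\wedge(A^{-1}dA)_2$, because rotations preserve area. The paper isolates precisely this observation as the step that finishes the argument; your phrase ``untwists into a product'' is the right intuition, but the mechanism is this rotational invariance of the K\"ahler form on the base rather than pointwise convergence of the coframe.
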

\begin{proof}
The argument is the same as in \cite{lott2} for the blowdown limit for a
homogeneous Ricci flow on $\tilde {SL}_2\arr$. Recall that $x\sim 2t$, $y'=0$,
and $|z|=O(e^{-Ct})$. Consider the family of diffeomorphisms
$\psi_s:\arr^4\to\tilde SL_2\arr\times\arr$ given by
\[\psi_s(q,r,u,v)=\exp(qX_1+rX_2)\exp(s^{\frac{1}{2}}(uX_3+vX_4)),\]
write $A(q,r)=\exp(qX_1+rX_2)$ and $B_s(u,v)=\exp(s^{\frac{1}{2}}(uX_3+vX_4))$
and let \[h^{-1}dh=B_s^{-1}A^{-1}dAB_s+s^{\frac{1}{2}}(duX_3+dvX_4)\] be the
Maurer-Cartan form. We compute
\[s^{-1}\psi_s^*\omega(st)\sim\frac{1}{2}(2t((B^{-1}_sA^{-1}dAB_s)_1\wedge(B^{-1
}_sA^{-1}dAB_s)_2)+y_0(du\wedge dv)).\]
The proof is concluded by noting that conjugation by $B_s$ gives a rotation in
the $(q,r)$-plane, so there is a blowdown limit
\[\omega_\infty(t)=\frac{1}{2}(2t (A^{-1}dA)_1\wedge(A^{-1}dA)_2+y_0du\wedge
dv),\]
which is now a product metric solution on $\cee H^1\times\cee$.
\end{proof}
\subsection{The Kodaira Surface Cases}
\begin{prop}
Let $\omega(\cdot)$ be a left invariant solution to pluriclosed flow on
$Nil\times\arr$. Then there is a blowdown limit
\[\omega_\infty(t)=\frac{1}{2}(2\sqrt{y_0t}\sigma^{12}+y_0\sigma^{34})\]
of $\omega(\cdot)$ which is an expanding soliton solution.
\end{prop}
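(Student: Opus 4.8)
The goal is to produce a blowdown limit for the $Nil^3 \times \arr$ (primary Kodaira) solution, exactly paralleling the hyperelliptic and properly elliptic cases already established. Recall from the relevant corollary that the flow satisfies $y' = z' = 0$ and $x' = 2y^2/(xy - |z|^2)$, which integrates to the conserved quantity $\frac{1}{2}x^2 y_0 - x|z_0|^2 = 2y_0^2 t + \text{const}$; in particular $x \sim 2\sqrt{y_0 t}$. The plan is to construct an $s$-dependent family of diffeomorphisms $\psi_s$ that absorbs the anisotropic growth rates of the metric coefficients — here $x$ grows like $\sqrt{t}$ while $y$ and $z$ are constant — so that the pulled-back, rescaled metrics converge uniformly on compact sets.

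\textbf{Construction of the diffeomorphisms.} First I would use the nilpotent group structure to write an element of $Nil^3 \times \arr$ uniquely in exponential coordinates adapted to the frame $X_i$ (real and imaginary parts of $Z_1, Z_2$), just as the preceding lemma did for $\tilde{SL_2}\arr \times \arr$. Because $Nil^3$ is 2-step nilpotent with $[X_1, X_2]$ producing the central direction $X_3$ (matching the bracket $d\zeta^2 = -\ii \zeta^{1\bar 1}$), the correct scaling must account for the fact that $x = g(Z_1, \overline{Z_1})$ grows only like $\sqrt{t}$. I would define $\psi_s(q,r,u,v) = \exp(s^{1/4}(qX_1 + rX_2))\exp(s^{1/2}(uX_3 + vX_4))$, choosing the exponent $1/4$ on the $X_1, X_2$ factor precisely so that $s^{-1}\cdot x(st) \cdot s^{1/2} \to y_0^{1/2}$ matches the claimed limit coefficient $2\sqrt{y_0 t}$ after rescaling, and the exponent $1/2$ on the central/$\arr$ factor so that the $y_0 \sigma^{34}$ term survives. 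Writing $\theta_s = \psi_s \circ \psi_1^{-1}$ and computing the pullback of the Maurer–Cartan form, the off-diagonal term involving $z$ is killed in the limit since $|z| = O(1)$ while the volume normalization sends its rescaled contribution to zero.

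\textbf{Taking the limit.} I would then compute $s^{-1}\theta_s^* \omega(st)$ explicitly in the $\sigma^i$ coframe, using $\omega = \frac{1}{2}(x\sigma^{12} + y\sigma^{34} - \Im(z)(\sigma^{13}+\sigma^{24}) + \Re(z)(\sigma^{14}-\sigma^{23}))$ and substituting the asymptotics $x \sim 2\sqrt{y_0 t}$, $y \equiv y_0$, $z = O(1)$. After checking that the scaling exponents make the $\sigma^{12}$ coefficient converge to $2\sqrt{y_0 t}$ and the $\sigma^{34}$ coefficient to $y_0$, while all mixed terms vanish, the limit is
\[
\omega_\infty(t) = \frac{1}{2}\bigl(2\sqrt{y_0 t}\,\sigma^{12} + y_0\,\sigma^{34}\bigr),
\]
and the general blowdown argument from the start of Section 4 guarantees it is an expanding soliton.

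\textbf{Main obstacle.} The delicate point is verifying that the bracket term $[X_1, X_2] = $ (central direction) does not contaminate the limit under the asymmetric scaling: conjugating the Maurer–Cartan form by the central factor $\exp(s^{1/2}(uX_3+vX_4))$ and by the anisotropic dilation must leave the leading $\sigma^{12}$ two-form invariant while suppressing cross terms. I expect to show that the nonlinear correction terms in $\psi_s^* (X_1^\flat \wedge X_2^\flat)$ carry a net negative power of $s$ and hence drop out, so that convergence is uniform on compact subsets of $\arr^4 \times (0,\infty)$. This step — confirming the scaling exponents $1/4$ and $1/2$ are jointly consistent with the $\sqrt{t}$ growth of $x$ and the nilpotent bracket — is where the computation, though routine, must be done carefully rather than quoted from the properly elliptic case, since there $x$ grew linearly and the relevant frame directions were non-central.
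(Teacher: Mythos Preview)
Your proposal is correct and matches the paper's proof essentially line for line: the paper uses exactly the anisotropic dilations with exponents $s^{1/4}$ on the $X_1,X_2$ directions and $s^{1/2}$ on the $X_3,X_4$ directions (written there as an explicit upper-triangular matrix in $Nil^3$ rather than via $\exp$), sets $\theta_s=\psi_s\circ\psi_1^{-1}$, and reads off $s^{-1}\theta_s^*\omega(st)=\tfrac{1}{2}(2\sqrt{y_0t}\,\sigma^{12}+y_0\,\sigma^{34})+O(s^{-1/4})$. Your anticipated ``main obstacle'' is handled automatically because the $1/4$--$1/2$ scaling is a Lie algebra automorphism of the graded nilpotent structure (the central $\sigma^3$ term $du-r\,dq$ scales homogeneously as $s^{1/2}$), so no cross terms survive; the paper simply asserts the $O(s^{-1/4})$ error without comment.
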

\begin{proof}
Recall that $x\sim 2\sqrt{y_0 t}$ and $y'=z'=0$.
Define diffeomorphisms $\psi_s:\arr^4\to Nil\times\arr$ by
\[\psi_s(q,r,u,v)=\left(\begin{pmatrix} 1 & s^{\frac{1}{4}}r & s^{\frac{1}{2}}u
\\ 0 & 1 & s^{\frac{1}{4}}q \\ 0 & 0 & 1
\end{pmatrix},s^{\frac{1}{2}}v\right),\]
and let $\theta_s=\psi_s\circ\psi_1^{-1}$. We see that
\[s^{-1}\theta_s^*\omega(st)=\frac{1}{2}(2\sqrt{y_0t}\sigma^{12}+y_0\sigma^{34}
)+O(s^{-\frac{1}{4}}).\]
Therefore the blowdown limit is given by
\[\omega_\infty(t)=\frac{1}{2}(2\sqrt{y_0 t}\sigma^{12}+y_0\sigma^{34})\]
\end{proof}
\begin{prop}
Let $\omega(t)$ be a left invariant solution to pluriclosed flow on
$Nil\rtimes\arr$. Then there is a blowdown limit
\[\omega_\infty(t)=\frac{1}{2}(2\sqrt{y_0t}\sigma^{12}+y_0\sigma^{34})\]
given by the expanding soliton solution on $Nil\times\arr$
\end{prop}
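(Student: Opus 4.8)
The plan is to reduce the claim to the Kodaira $Nil\times\arr$ computation already carried out, by exploiting two facts: the flow asymptotics on $Nil\rtimes\arr$ are identical to those on $Nil\times\arr$, and the only structural difference between the two geometries is a rotation of the $(X_1,X_2)$-plane that is invisible to the surviving term of the blowdown metric. First I would record from the preceding corollary that $x\sim 2\sqrt{y_0 t}$, $y\equiv y_0$, and $|z|=O(e^{-Ct})$, exactly the asymptotics governing the $Nil\times\arr$ blowdown. Passing to the real basis $X_i$, a short computation from the given structure equations shows that the nonvanishing brackets are $[X_1,X_2]=-\epsilon X_3$, $[X_1,X_4]=\epsilon X_2$, and $[X_2,X_4]=-\epsilon X_1$. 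Thus $Nil\rtimes\arr$ is the Heisenberg group $\langle X_1,X_2,X_3\rangle$ extended by the one-parameter group $\exp(vX_4)$, which acts by rotation in the $(X_1,X_2)$-plane while fixing the center $X_3$; setting $\epsilon=0$ recovers the direct product $Nil\times\arr$.

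Next I would define diffeomorphisms $\psi_s\colon\arr^4\to Nil\rtimes\arr$ of the form $\psi_s(q,r,u,v)=N_s(q,r,u)\exp(s^{1/2}vX_4)$, where $N_s$ is the Heisenberg element with the same dilation used in the $Nil\times\arr$ case (the $X_1,X_2$ coordinates scaled by $s^{1/4}$ and the central $X_3$ coordinate by $s^{1/2}$), and set $\theta_s=\psi_s\circ\psi_1^{-1}$. Writing $R=\exp(s^{1/2}vX_4)$ and $N=N_s$, the Maurer--Cartan form decomposes as $g^{-1}dg=R^{-1}(N^{-1}dN)R+R^{-1}dR$, exactly as in the $\tilde{SL_2}\arr\times\arr$ argument. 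Here $R^{-1}dR=s^{1/2}\,dv\,X_4$, while conjugation by $R$ rotates the $X_1,X_2$ components of $N^{-1}dN$ by the angle $\pm\epsilon s^{1/2}v$ and fixes the $X_3$ component.

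The key point is that the surviving terms of the rescaled metric are insensitive to this rotation. As in the properly elliptic case, the area form $\sigma^{12}=\sigma^1\wedge\sigma^2$ on the rotated plane equals the Heisenberg area form, since any planar rotation preserves a two-dimensional wedge; hence the $x\,\sigma^{12}$ term pulls back precisely as in the $Nil\times\arr$ computation, and after multiplication by $s^{-1}$ and use of $x(st)\sim 2\sqrt{y_0 t}\,s^{1/2}$ it contributes $\frac{1}{2}\cdot 2\sqrt{y_0 t}\,\sigma^{12}$ in the limit. The $y\,\sigma^{34}$ term involves the rotation-fixed central form $\sigma^3$ together with $\sigma^4=s^{1/2}dv$ and contributes $\frac{1}{2}y_0\sigma^{34}$ just as before. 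The remaining cross terms all carry the factor $z$, and although their rotated coefficients oscillate as the angle $\epsilon s^{1/2}v\to\infty$, the product of the exponentially small $|z(st)|=O(e^{-Cst})$ with the polynomial form scalings (at worst $s^{3/4}$) tends to zero after multiplication by $s^{-1}$. I would then conclude that $s^{-1}\theta_s^*\omega(st)\to\frac{1}{2}(2\sqrt{y_0 t}\,\sigma^{12}+y_0\sigma^{34})$, which is the expanding soliton on $Nil\times\arr$ constructed above.

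I expect the main obstacle to be the third step: verifying rigorously that the rotation, whose angular rate grows like $s^{1/2}$, is genuinely absorbed into the area form and leaves no residual contribution to the limiting metric, and confirming that the oscillatory cross terms decay uniformly on compact subsets of $M\times(0,\infty)$ despite this fast rotation. This is exactly the mechanism already exploited for $\tilde{SL_2}\arr\times\arr$, so the remaining work is to check that the exponential decay of $z$ dominates the polynomial growth of the relevant wedge scalings and that the Heisenberg dilation furnishes the same $\sqrt{t}$-scaled limit as before.
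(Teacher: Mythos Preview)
Your proposal is correct and follows essentially the same approach as the paper: recall the asymptotics, note that $Nil\rtimes\arr$ and $Nil\times\arr$ are diffeomorphic, and apply the same scaling diffeomorphisms used in the $Nil\times\arr$ case. The paper's proof is a two-line remark to this effect, whereas you have spelled out the mechanism---that conjugation by $\exp(s^{1/2}vX_4)$ is a rotation in the $(X_1,X_2)$-plane and hence preserves $\sigma^{12}$, exactly as in the $\tilde{SL_2}\arr\times\arr$ argument---which is the content the paper leaves implicit.
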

\begin{proof}
Recall that $x\sim 2\sqrt{y_0 t}$, $y'=0$, and $|z|=O(e^{-Ct})$. Note that these groups $Nil\rtimes\arr$ and $Nil\times\arr$ are diffeomorphic. For the same coordinates as in the previous case, use the same
diffeomorphisms $\theta_s$ to obtain the desired
blowdown limit.
\end{proof}
\subsection{The Inoue Surface Cases}\
Let $\omega(t)$ be a left invariant solution to pluriclosed flow on the solvable
family. Recall that $x'=0$, $y\sim 12a^2t$, and $|z|=O(1)$.
\begin{prop}
There is a blowdown limit 
\[\omega_\infty(t)=\frac{1}{2}(x_0\sigma^{12}+12a^2t\sigma^{34})\]
\end{prop}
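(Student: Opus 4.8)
The plan is to mimic the blowdown constructions already carried out in the hyperelliptic and properly elliptic cases, exploiting the asymptotics $x'=0$, $y\sim 12a^2t$, and $|z|=O(1)$ recorded just above the statement. The essential point is that the evolving metric decomposes, up to lower-order terms, into a fixed piece $\tfrac12 x_0\sigma^{12}$ in the $Z_1$ directions and a linearly growing piece $\tfrac12 y(t)\sigma^{34}\sim 6a^2t\,\sigma^{34}$ in the $Z_2$ directions, while the cross term $\Re(z)$, $\Im(z)$ stays bounded. A blowdown rescaling by $s^{-1}$ will kill the bounded cross term and leave exactly the claimed product form $\omega_\infty(t)=\tfrac12(x_0\sigma^{12}+12a^2t\,\sigma^{34})$, provided one chooses the pullback diffeomorphisms $\theta_s$ so as to absorb the differing growth rates of the two blocks.

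First I would introduce a one-parameter family of diffeomorphisms $\psi_s:\arr^4\to G$ adapted to the two blocks. Since the frame brackets from the Inoue lemma give $d\zeta^1=-\lambda\zeta^{12}+\lambda\zeta^{1\overline 2}$ and $d\zeta^2=-2a\ii\,\zeta^{2\overline 2}$, the $X_3,X_4$ directions (real and imaginary parts of $Z_2$) form a subalgebra on which the remaining directions depend, so I would set $\psi_s(q,r,u,v)=\exp(qX_1+rX_2)\exp(s^{1/2}(uX_3+vX_4))$, keeping the $Z_1$-block unscaled because $x'=0$ and rescaling the $Z_2$-block by $s^{1/2}$ to match $y\sim 12a^2t$. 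Writing $\theta_s=\psi_s\circ\psi_1^{-1}$ and computing the pullback of the Maurer–Cartan form, I expect
\[
s^{-1}\theta_s^*\omega(st)=\frac{1}{2}\bigl(x_0\sigma^{12}+12a^2t\,\sigma^{34}\bigr)+O(s^{-1})
\]
as $s\to\infty$, uniformly on compact sets, which yields the stated limit.

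The step I expect to be the main obstacle is controlling the cross term under conjugation. Unlike the purely abelian or direct-product cases, here $[X_4,X_1]=aX_1+bX_2$ and $[X_4,X_2]=-bX_1+aX_2$, so conjugating the $Z_1$-block by the $s^{1/2}$-scaled exponential in the $X_3,X_4$ directions produces exponential factors $e^{\pm a s^{1/2}u}$ and a rotation by $b s^{1/2}u$ acting on the $(\sigma^1,\sigma^2)$ pair. I must verify that after the $s^{-1}$ rescaling these conjugation factors, together with the $O(1)$ bound on $z$, contribute only to the vanishing error term rather than distorting the leading block. Concretely, the boundedness of $z$ guarantees the $\Re(z),\Im(z)$ cross terms carry a prefactor that decays under $s^{-1}$, and the rotation/dilation from the adjoint action must be shown to act trivially in the limit on the surviving $\sigma^{12}$ and $\sigma^{34}$ terms. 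Once this uniform control is established, the general blowdown argument from the start of this section applies verbatim to conclude that $\omega_\infty(t)=t\,\omega_\infty(1)$ up to automorphism, so $\omega_\infty$ is an expanding soliton.
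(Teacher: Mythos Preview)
Your choice of diffeomorphisms is backwards. You scale the $Z_2$-block by $s^{1/2}$ and leave the $Z_1$-block unscaled ``because $x'=0$,'' but this is the opposite of what the blowdown requires. After dividing by $s$, the coefficient $y(st)\sim 12a^2st$ already gives $s^{-1}y(st)\to 12a^2t$ with \emph{no} rescaling of $\sigma^{34}$, whereas the constant coefficient $x_0$ would be sent to $s^{-1}x_0\to 0$ unless the $\sigma^{12}$ term is inflated by a factor of $s$. Compare the properly elliptic case: there $x\sim 2t$, $y'=0$, and it is the constant ($y$) block that is rescaled by $s^{1/2}$ while the linearly growing ($x$) block is left alone. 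Here the roles are exchanged, so you must rescale the $Z_1$-block and leave the $Z_2$-block fixed. With your $\psi_s$ the limit of $s^{-1}\theta_s^*\omega(st)$ is degenerate in the $\sigma^{12}$ direction and diverges in the $\sigma^{34}$ direction.

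The conjugation obstacle you flagged is not a technical detail to be verified but a genuine obstruction: conjugating the $X_1,X_2$ block by $\exp(s^{1/2}vX_4)$ introduces factors $e^{\pm a s^{1/2}v}$, which no polynomial power of $s^{-1}$ can absorb. The paper avoids this entirely by observing that the $(X_1,X_2)$ directions span an abelian \emph{ideal} (the $w\in\cee$ translation part of the matrix group), so the scaling $(q,r,u,v)\mapsto(s^{1/2}q,s^{1/2}r,u,v)$ is a group automorphism. Under this automorphism the left-invariant forms transform by $\sigma^{1},\sigma^{2}\mapsto s^{1/2}\sigma^{1},s^{1/2}\sigma^{2}$ and $\sigma^{3},\sigma^{4}$ are fixed, with no conjugation terms at all; one then reads off $s^{-1}\theta_s^*\omega(st)=\tfrac12(x_0\sigma^{12}+12a^2t\,\sigma^{34})+O(s^{-1/2})$ directly.
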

\begin{proof}
Define diffeomorphisms $\psi_s:\arr^4\to G$ by
\[\psi_s(q,r,u,v)=\begin{pmatrix}e^{av}\cos(bv) & -e^{av}\sin(bv) & 0 &
s^{\frac{1}{2}}q\\e^{av}\sin(bv) & e^{av}\cos(bv) & 0 & s^{\frac{1}{2}}r \\ 0 &
0 & e^{-2av} & u \\ 0 & 0 & 0 & 1\end{pmatrix}\]
and let $\theta_s=\psi_s\circ\psi_1^{-1}$. Then
\[s^{-1}\theta_s^*g(st)=\frac{1}{2}(x_0\sigma^{12}+12a^2t\sigma^{34})+O(s^{
-\frac{1}{2}})\]
\end{proof}
\begin{prop}
Let $\omega(t)$ be a left invariant solution to pluriclosed flow on $Sol_1$.
There is a blowdown limit
\[\omega_\infty(t)=\frac{1}{2}(4t\sigma^{12}+y_0\sigma^{34})\]
\end{prop}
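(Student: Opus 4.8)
The plan is to realize the blowdown through a one-parameter family of group automorphisms of $Sol_1$, which renders the limit completely explicit. First I would recall from the preceding corollary that along the flow $x\sim 4t$, $y'=0$ (so $y\equiv y_0$), and $|z|$ is bounded. As the $\sigma^{12}$-coefficient grows linearly while the $\sigma^{34}$-coefficient stays constant, any successful blowdown must leave the directions $X_1,X_2$ (the real and imaginary parts of $Z_1$) unscaled while contracting $X_3,X_4$ (those of $Z_2$) by the factor $s^{1/2}$.

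The structural step is to convert the complex brackets of $Sol_1$ into the real frame, which yields
\[[X_1,X_2]=X_2\gap[X_1,X_3]=-X_3\gap[X_2,X_3]=-X_4,\]
with all other brackets zero; thus $\langle X_3,X_4\rangle$ is an abelian ideal and $\langle X_1,X_2\rangle\cong\mathfrak{aff}(\arr)$ is the affine subalgebra underlying $\cee H^1$. I would then verify that the dilation $\phi_s$ fixing $X_1,X_2$ and sending $X_3\mapsto s^{1/2}X_3$, $X_4\mapsto s^{1/2}X_4$ is a Lie algebra automorphism. The only relation requiring attention is $[X_2,X_3]=-X_4$, which scales consistently precisely because $X_3$ and $X_4$ are contracted by the same power of $s$. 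Since $Sol_1$ is simply connected and solvable, $\phi_s$ integrates to a group automorphism $\theta_s$, and $\phi_{s_1}\phi_{s_2}=\phi_{s_1s_2}$ gives the semigroup property $\theta_{s_1}\circ\theta_{s_2}=\theta_{s_1s_2}$ demanded by the blowdown framework of this section. As $\phi_s$ commutes with the complex structure, each $\theta_s$ is a biholomorphism, so $\theta_s^*\omega$ is again a left-invariant Hermitian metric.

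The remaining computation is then immediate. Because $\theta_s^*\zeta^1=\zeta^1$ and $\theta_s^*\zeta^2=s^{1/2}\zeta^2$, the automorphism sends a left-invariant metric with parameters $(x,y,z)$ to one with parameters $(x,sy,s^{1/2}z)$; hence $s^{-1}\theta_s^*\omega(st)$ carries parameters $(s^{-1}x(st),\,y(st),\,s^{-1/2}z(st))$. Letting $s\to\infty$ and using $x\sim 4t$, $y\equiv y_0$, and the boundedness of $z$, these converge to $(4t,y_0,0)$, so that
\[\omega_\infty(t)=\ii(4t\,\zeta^{1\overline{1}}+y_0\,\zeta^{2\overline{2}})=\frac{1}{2}(4t\sigma^{12}+y_0\sigma^{34}).\]
Since the parameters are constant in space this convergence is $C^\infty$ uniformly on compact sets, and substituting $z=0$ into the flow equations $(x'=4,\ y'=z'=0)$ confirms directly that $\omega_\infty(t)$ is an expanding soliton on $Sol_1$.

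I expect the only genuine obstacle to be the bracket computation and, in particular, the observation that the contracting dilation is an \emph{exact} automorphism rather than merely an asymptotic one. This is what distinguishes $Sol_1$ from the earlier $\tilde{SL_2}\arr\times\arr$ case: there the analogous rescaling fails to preserve the brackets, forcing one to conjugate by $B_s$ and argue that the off-diagonal structure straightens out only in the limit. Here no error estimate is needed, since $\theta_s^*\omega(st)$ is genuinely left-invariant on $Sol_1$ for every $s$ and the cross term is eliminated simply by the $s^{-1/2}$ prefactor together with the boundedness of $z$.
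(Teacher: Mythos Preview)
Your proof is correct and coincides with the paper's approach: the paper writes the same one-parameter family of automorphisms in explicit matrix coordinates on $Sol_1^4$, scaling the $(u,v)$ entries by $s^{1/2}$ while fixing $(q,r)$, and your Lie-algebra dilation $\phi_s$ is exactly the derivative of that map at the identity. Your presentation is slightly more conceptual in that you verify the automorphism property abstractly via the brackets rather than via the matrix multiplication law, but the underlying diffeomorphisms $\theta_s$ and the resulting $O(s^{-1/2})$ decay of the cross term are identical.
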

\begin{proof}
Recall that $x\sim 4t$, $y'=0$, and $z$ is bounded.
Define diffeomorphisms $\psi_s:\arr^3\times\arr^+\to Sol_1$ by
\[\psi_s(u,v,r,q)=\begin{pmatrix}
1 & s^{\frac{1}{2}}u & s^{\frac{1}{2}}v \\
0 & q & r \\
0 & 0 & 1
\end{pmatrix}\]
and let $\theta_s$ be as before. We see that
\[s^{-1}\theta_s^*g(st)=\frac{1}{2}(4t\sigma^{12}+y_0\sigma^{34})+O(s^{-\frac{1}
{2}})\]
\end{proof}
\begin{prop}
Let $\omega(t)$ be a left invariant solution to pluriclosed flow on $Sol_1'$.
There is a blowdown limit
\[\omega_\infty(t)=\frac{1}{2}(4t\sigma^{12}+y_0\sigma^{34})\]
given by the expanding soliton solution with respect to the other complex
structure $Sol_1$.
\end{prop}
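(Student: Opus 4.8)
The plan is to exploit that $Sol_1'$ and $Sol_1$ are two distinct left invariant complex structures on one and the same underlying solvable Lie group $Sol_1^4$, and to show that the blowdown procedure erases the difference between them. Concretely, I would reuse the family of diffeomorphisms $\theta_s=\psi_s\circ\psi_1^{-1}$ from the previous proposition, with $\psi_s$ as in the $Sol_1$ case, since these are defined purely in terms of the matrix group $Sol_1^4$ and make no reference to the complex structure. First I would record the asymptotics from the corresponding corollary: $y'=0$ so $y\equiv y_0$, $x\sim 4t$, and, crucially, the only logarithmic bound $|z|=O(\log t)$.

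The two discrepancies between the $Sol_1'$ metric and the $Sol_1$ metric are (i) the off-diagonal cross terms proportional to $\Re z$ and $\Im z$ in
\[
\omega=\frac{1}{2}\bigl(x\sigma^{12}+y\sigma^{34}-\Im(z)(\sigma^{13}+\sigma^{24})+\Re(z)(\sigma^{14}-\sigma^{23})\bigr),
\]
and (ii) the structure constants, which for $Sol_1'$ differ from those of $Sol_1$ by the single extra term $-\zeta^{1\bar1}$ in $d\zeta^2$, equivalently one extra $\sigma^{12}$ term in $d\sigma^4$. Under the chosen $\theta_s$ the coframe directions $\sigma^1,\sigma^2$ are the unscaled $(q,r)$ directions while $\sigma^3,\sigma^4$ are the $s^{1/2}$-scaled $(u,v)$ directions, so that $\theta_s^*\sigma^{12}$ is of order $1$ and $\theta_s^*\sigma^{34}$ is of order $s$.

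I would then estimate each contribution after applying $s^{-1}\theta_s^*(\cdot)$ at time $st$. The diagonal part reproduces the $Sol_1$ computation verbatim, giving $\frac{1}{2}(4t\sigma^{12}+y_0\sigma^{34})$ in the limit. Each cross term involves one unscaled and one scaled direction, hence scales like $s^{1/2}$; combined with the coefficient $|z(st)|=O(\log(st))$ and the overall factor $s^{-1}$, it is of size $O(s^{-1/2}\log(st))$, which on any compact subset of $(0,\infty)$ in the $t$-variable is $O(s^{-1/2}\log s)\to 0$, so all cross terms drop out. For the extra structure constant I would check that it couples the unscaled directions into $d\sigma^4$, so that its effect on the pulled-back coframe enters with a strictly negative power of $s$ and contributes nothing in the limit. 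Assembling these estimates yields
\[
s^{-1}\theta_s^*\omega(st)=\frac{1}{2}\bigl(4t\sigma^{12}+y_0\sigma^{34}\bigr)+o(1)
\]
uniformly on compact sets, so the blowdown limit is the asserted expanding soliton, identical to the one produced for $Sol_1$.

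The main obstacle I anticipate is item (ii) combined with the merely logarithmic decay in (i): unlike the exponentially small errors in most of the other cases, here I must verify carefully that both the $O(\log t)$ growth of $z$ and the extra structure constant of $Sol_1'$ are genuinely subleading after the $s^{-1}$ rescaling, producing an $O(s^{-1/2}\log s)$ error rather than the cleaner $O(s^{-1/2})$ bound of the $Sol_1$ case. Once this bookkeeping is in place, the identification of the limit with the $Sol_1$ soliton is immediate, reflecting the fact that the blowdown limit depends only on the underlying group and the leading asymptotics of the metric, not on the choice between the two compatible complex structures.
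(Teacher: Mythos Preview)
Your proposal is correct and follows essentially the same route as the paper: reuse the $\theta_s$ from the $Sol_1$ case, record the asymptotics $x\sim 4t$, $y\equiv y_0$, $|z|=O(\log t)$, and check that every term beyond the diagonal $\frac{1}{2}(4t\sigma^{12}+y_0\sigma^{34})$ dies at rate $O(s^{-1/2}\log s)$ or better. The only place the paper is more concrete than your item (ii) is that it writes the coframe relation explicitly, $\tilde\sigma^i=\sigma^i$ for $i=1,2,3$ and $\tilde\sigma^4=\sigma^4-\sigma^2$, and then expands the $Sol_1'$ metric in the $Sol_1$ coframe; this produces the additional terms $-y\sigma^{32}$ and $-\Re(z)\sigma^{12}$, which under $s^{-1}\theta_s^*$ are $O(s^{-1/2})$ and $O(s^{-1}\log s)$ respectively, making your ``extra structure constant'' estimate precise.
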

\begin{proof}
Recall that $x\sim 4t$, $y'=0$, and $|z|=O(\log t)$. Let $\sigma^i$ be one forms
dual to the $X_i$ associated to the complex structure $Sol_1$. The one forms
$\tilde\sigma^i$, dual to the $\tilde X_i$ associated to the complex structure
$Sol_1'$, are given by
\[\tilde{\sigma}^i=\sigma^i\gap(i=1,2,3)\gap\tilde \sigma^4=\sigma^4-\sigma^2.\]
If
\[\omega=\frac{1}{2}(x\tilde \sigma^{12}+y\tilde
\sigma^{34}-\Im(z)(\tilde\sigma^{13}+\tilde \sigma^{24})+\Re(z)(\tilde
\sigma^{14}-\tilde \sigma^{23})\]
is a left invariant Hermitian metric with respect to the complex structure
$Sol_1'$, we see that
\[\omega=\frac{1}{2}(x\sigma^{12}+y(\sigma^{34}-\sigma^{32})-\Im(z)(\sigma^{13}
+\sigma^{24})+\Re(z)(\sigma^{14}-\sigma^{12}-\sigma^{23})).\]
Therefore, using the same diffeomorphisms $\theta_s$ as in the previous
proposition, we find a blowdown limit
\[\omega_\infty(t)=\frac{1}{2}(4t\sigma^{12}+y_0\sigma^{34})\]
\end{proof}
We are now able to prove Theorem \ref{soli}
\begin{proof}
Let $\omega(t)$ be a locally homogeneous solution to pluriclosed flow on a compact complex surface $M$ which exists on the maximal time interval $[0,T)$. If $T$ is finite then $M$ must be either $\cee P^2$ or a product of $\cee P^1$ with a curve. Suppose then that $T=\infty$. If $\omega(t)$ is K\"ahler then the induced metric on the universal cover of $M$ is one of $\cee H^2$, $\cee H^1\times \cee H^1$, $\cee H^1\times \cee$, or $\cee ^2$. In each case, the induced metric is a product of K\"ahler-Einstein metrics and it is easy to obtain the required diffeomorphisms to show the existence of a blowdown limit. If $\omega(t)$ is non-K\"ahler, it is either a Hopf surface or one of the metrics considered in this section. It was shown in Corollary 3.5 that the pluriclosed flow of a homogeneous metric on a Hopf surface converges to a canonical metric up to homothety, and we have already constructed blowdown limits for the remaining cases.
\end{proof}
\section{Conclusion}
We have seen the long time behavior of pluriclosed flow on a wide variety non-K\"ahler complex surfaces and have observed that information of the complex structure of such a surface is contained in the asymptotic behavior of the flow. This is a small step toward using the flow to study all non-K\"ahler complex surfaces, and the philosophy is that one would like to take a result which holds for the (K\"ahler-)Ricci flow and prove an analogous result in this pluriclosed setting. 

For example, it was shown in \cite{lott1} that a 3-dimensional type III solution to Ricci flow has a pullback metric on the universal cover which approaches a homogeneous expanding soliton. The main ingredients of the proof of that are a groupoid compactness result for type III Ricci flow solutions and a modified expanding entropy functional $\mathcal W_+$. It is not unreasonable to think that a similar compactness result holds for solutions to pluriclosed flow, and, as shown in Section 6 of \cite{st3}, after pulling back by diffeomorphisms pluriclosed flow admits an analogous monotonic expanding entropy $\mathcal{W}_+$. It is highly likely that one can modify this entropy to prove that type III solutions to pluriclosed flow approach homogeneous metrics when pulled back to the universal cover. Work toward this end is ongoing.

We are also interested in the behavior of the family of flows
\[\frac{d\omega}{dt}=-(\rho^\tau_\omega)^{(1,1)},\]
for different values of $\tau$. Here $\rho_\omega^\tau$ is the Ricci-form associated to the connection $\nabla^\tau$ in the canonical family of Gauduchon \cite{ga1}. The case $\tau=-1$ gives the Bismut connection and corresponds to the pluriclosed flow considered in this paper. The case $\tau=1$ gives the Chern connection and the flow corresponds to the Chern-Ricci flow considered by Tosatti and Weinkove and, similar to what we have done here, homogeneous and soliton solutions to Chern-Ricci flow on Lie groups have been studied in \cite{lau}. We are interested in the bifurcation theory for this family of flows on homogeneous complex surfaces. For example, the Chern-Ricci flow of a homogeneous metric on the Hopf surface must have a finite time singularity, but we have seen that pluriclosed flow always converges to a canonical metric in this case. Additionally, as shown in \cite{lau}, any left invariant Hermititan structure on a nilpotent Lie group is fixed under Chern-Ricci flow, while we have seen non-trivial solutions to the pluriclosed flow on $Nil\times\arr$. Therefore there are values of $\tau$ which induce qualitative changes in the behavior of solutions to this family of flows on a complex surface and, speculatively, the corresponding connections $\nabla^\tau$ may be canonical for the complex surface in some sense.

\bibliographystyle{amsplain}
\bibliography{pluriclosed_refs}

\providecommand{\bysame}{\leavevmode\hbox to3em{\hrulefill}\thinspace}
\providecommand{\MR}{\relax\ifhmode\unskip\space\fi MR }
\providecommand{\MRhref}[2]{%
  \href{http://www.ams.org/mathscinet-getitem?mr=#1}{#2}
}
\providecommand{\href}[2]{#2}
\begin{thebibliography}{10}

\bibitem{cao1}
Huai~Dong Cao, \emph{Deformation of {K}\"ahler metrics to {K}\"ahler-{E}instein
  metrics on compact {K}\"ahler manifolds}, Invent. Math. \textbf{81} (1985),
  no.~2, 359--372. \MR{799272 (87d:58051)}

\bibitem{ivga}
P.~Gauduchon and S.~Ivanov, \emph{Einstein-{H}ermitian surfaces and {H}ermitian
  {E}instein-{W}eyl structures in dimension {$4$}}, Math. Z. \textbf{226}
  (1997), no.~2, 317--326. \MR{1477631 (98m:53063)}

\bibitem{ga1}
Paul Gauduchon, \emph{Hermitian connections and {D}irac operators}, Boll. Un.
  Mat. Ital. B (7) \textbf{11} (1997), no.~2, suppl., 257--288. \MR{1456265
  (98c:53034)}

\bibitem{ino}
Masahisa Inoue, \emph{On surfaces of {C}lass {${\rm VII}_{0}$}}, Invent. Math.
  \textbf{24} (1974), 269--310. \MR{0342734 (49 \#7479)}

\bibitem{ijl}
James Isenberg, Martin Jackson, and Peng Lu, \emph{Ricci flow on locally
  homogeneous closed 4-manifolds}, Comm. Anal. Geom. \textbf{14} (2006), no.~2,
  345--386. \MR{2255014 (2007e:53087)}

\bibitem{lau}
J.~{Lauret} and E.~{Alejandro Rodriguez Valencia}, \emph{{On the Chern-Ricci
  flow and its solitons for Lie groups}}, ArXiv e-prints (2013).

\bibitem{lott2}
John Lott, \emph{On the long-time behavior of type-{III} {R}icci flow
  solutions}, Math. Ann. \textbf{339} (2007), no.~3, 627--666. \MR{2336062
  (2008i:53093)}

\bibitem{lott1}
\bysame, \emph{Dimensional reduction and the long-time behavior of {R}icci
  flow}, Comment. Math. Helv. \textbf{85} (2010), no.~3, 485--534. \MR{2653690
  (2011j:53123)}

\bibitem{st1}
Jeffrey Streets and Gang Tian, \emph{A parabolic flow of pluriclosed metrics},
  Int. Math. Res. Not. IMRN (2010), no.~16, 3101--3133. \MR{2673720
  (2011h:53091)}

\bibitem{st2}
\bysame, \emph{Hermitian curvature flow}, J. Eur. Math. Soc. (JEMS) \textbf{13}
  (2011), no.~3, 601--634. \MR{2781927 (2012f:53142)}

\bibitem{st3}
\bysame, \emph{Regularity results for pluriclosed flow}, Geom. Topol.
  \textbf{17} (2013), no.~4, 2389--2429. \MR{3110582}

\bibitem{tia1}
Gang Tian and Zhou Zhang, \emph{On the {K}\"ahler-{R}icci flow on projective
  manifolds of general type}, Chinese Ann. Math. Ser. B \textbf{27} (2006),
  no.~2, 179--192. \MR{2243679 (2007c:32029)}

\bibitem{tw1}
Valentino Tosatti and Ben Weinkove, \emph{The {C}hern-{R}icci flow on complex
  surfaces}, Compos. Math. \textbf{149} (2013), no.~12, 2101--2138.
  \MR{3143707}

\bibitem{ve1}
Luigi Vezzoni, \emph{A note on canonical {R}icci forms on {$2$}-step
  nilmanifolds}, Proc. Amer. Math. Soc. \textbf{141} (2013), no.~1, 325--333.
  \MR{2988734}

\bibitem{wa1}
C.~T.~C. Wall, \emph{Geometries and geometric structures in real dimension
  {$4$} and complex dimension {$2$}}, Geometry and topology ({C}ollege {P}ark,
  {M}d., 1983/84), Lecture Notes in Math., vol. 1167, Springer, Berlin, 1985,
  pp.~268--292. \MR{827276 (87e:57023)}

\end{thebibliography}

\end{document}